\documentclass[10pt,twoside,english,reqno,a4paper]{amsart}

\usepackage{listings,graphicx,amsmath,varioref,amscd,amssymb,color,bm,stmaryrd,amsthm,amsfonts,graphics,geometry,latexsym,pgf,pst-all} 
\theoremstyle{plain}
\usepackage{esint}
\usepackage{amsthm}
\usepackage{tikz}
\usepackage{pgfplots}
\usepackage{enumerate}

\usepackage{resmes} 

\theoremstyle{plain}
\newtheorem{theorem}{Theorem}[section]
\newtheorem{proposition}[theorem]{Proposition}
\newtheorem{lemma}[theorem]{Lemma}

\newtheorem{corollary}[theorem]{Corollary}

\usepackage{geometry}
\usepackage[colorlinks=false]{hyperref}

\theoremstyle{definition}
\newtheorem{defin}[theorem]{Definition}

\newtheorem{remark}[theorem]{Remark}

\theoremstyle{remark}

\usepackage{fouriernc}
\usepackage[T1]{fontenc}

\numberwithin{equation}{section}

\DeclareMathOperator{\R}{\mathbb{R}}
\DeclareMathOperator{\N}{\mathbb{N}}

\newcommand{\car}[1]{\raise1pt\hbox{$\chi$}_{#1}}

\newcommand{\DM }{\mathcal{DM}^\infty }

\def\into{\int_{\Omega}}
\def\intdo{\int_{\partial\Omega}}
\def\dH{\mathrm{d}\mathcal{H}^{N-1}}
\def\ae{\mathrm{a.e.}}

\def\bvo{{BV(\Omega)}}
\def\lio{{L^\infty(\Omega)}}
\def\luo{{L^1(\Omega)}}
\def\lno{{L^N(\Omega)}}

\begin{document}
\title[Existence and comparison results for a doubly singular 1-Laplacian problem with $L^1$ data]{Existence and comparison results for a doubly\\ singular
1-Laplacian problem with $L^1$ data}

\author[A. J. Mart\'inez Aparicio]{Antonio J. Mart\'inez Aparicio}

\address[Antonio J. Mart\'inez Aparicio]{Departamento de Matem\'aticas,
Universidad de Almer\'ia
	\hfill \break\indent
    Ctra. Sacramento s/n, La Ca\^{n}ada de San Urbano, 04120 Almer\'ia, Spain}
\email{\tt ajmaparicio@ual.es}

\keywords{1-Laplacian, Natural gradient term, Singular terms, Regularizing effect.} 
\subjclass[2020]{35A01, 35B09, 35B51, 35J25, 35J60, 35J75}

\begin{abstract}
In this work, we conduct a comprehensive study of problem
\begin{equation*}
	\begin{cases}
		-\Delta_1 u + g(u)|Du| = h(u)f & \text{in }\Omega,\\
		u=0 & \text{on } \partial\Omega,
	\end{cases}
\end{equation*}
where $\Omega\subset \mathbb{R}^N$ is a bounded Lipschitz domain, $f\in L^1(\Omega)$ is a nonnegative datum, and $g,h$ are nonnegative continuous functions on $(0,\infty)$ that may be singular at the origin. Under the minimal assumptions that $g$ is integrable near zero and $h$ is bounded at infinity, we explore the existence of a global $BV(\Omega)$ solution. Furthermore, a comparison principle is proved under suitable monotonicity assumptions on $h$. This framework avoids any growth restrictions on $h$ near the origin, thus allowing for highly singular terms. To handle these nonlinearities, we introduce a novel approach that takes advantage of the rigid structure of the 1-Laplacian operator.
\end{abstract}

\maketitle

\setcounter{tocdepth}{1}
\tableofcontents

\section{Introduction}

The aim of this work is to provide a complete analysis of problem
\begin{equation}
\label{eq:PbIntro}
	\begin{cases}
		-\Delta_1 u + g(u)|Du| = h(u)f & \text{in }\Omega,\\
		u=0 & \text{on } \partial\Omega,
	\end{cases}
\end{equation}
where $\Delta_1 u := \operatorname{div}(|Du|^{-1} Du)$ denotes the 1-Laplacian operator and $\Omega\subset \R^N$ $(N\geq 2)$ is a bounded domain with Lipschitz boundary. We operate under the following fundamental assumptions: $f$ is a nonnegative function in $\luo$ and $g,h\colon [0,\infty) \to [0,\infty]$ are continuous functions which are finite in $(0,\infty)$ and possibly singular at zero. Furthermore, $g$ is strictly positive and integrable at the origin, while $h$ remains bounded at infinity.

The 1-Laplacian operator introduces several technical challenges compared to the $p$-Laplacian ($p>1$). Principally, the lack of reflexivity and compactness of $W^{1,1}(\Omega)$ makes $\bvo$ the natural space for the solutions. Recall that $\bvo$ is the space of $L^1(\Omega)$ functions $u$ whose distributional gradient $Du$ is a finite Radon measure. Since $Du$ is merely a measure, a significant issue arises in defining the quotient $|Du|^{-1} Du$ that appears in the formal expression of $\Delta_1 u$. A rigorous solution concept, now standard in the field, was pioneered in the seminal paper~\cite{AnBaCaMa} and involves a bounded vector field $z$ that plays the role of $|Du|^{-1} Du$. A striking consequence of this definition is the interpretation of the Dirichlet boundary condition, which is satisfied only in a generalized sense, allowing for the existence of solutions with a non-zero trace on $\partial\Omega$.

In the absence of the gradient term ($g\equiv 0$), problem~\eqref{eq:PbIntro}---which arises in several geometric problems and in image processing models, see~\cite{Leo}---exhibits a degenerate behaviour. Unlike what occurs for the $p$-Laplacian, the existence of solution for problem
\begin{equation}
\label{eq:PbIntro2}
	\begin{cases}
		-\Delta_1 u = h(u)f & \text{in }\Omega,\\
		u=0 & \text{on } \partial\Omega,
	\end{cases}
\end{equation}
with $h\equiv 1$ and $f\in\lno \subset W^{-1,\infty}(\Omega)$ strongly depends on the size of $f$ (\cite{MeSeTr}, see also~\cite{CiTr, Kaw}). A solution to~\eqref{eq:PbIntro2} exists only if $\|f\|_{W^{-1,\infty}(\Omega)} \leq 1$, and the unique solution is $u\equiv 0$ whenever $\|f\|_{W^{-1,\infty}(\Omega)} < 1$. It is only in the critical case $\|f\|_{W^{-1,\infty}(\Omega)} = 1$ that nontrivial solutions may appear. However, in this scenario, the problem exhibits a remarkable lack of uniqueness: any increasing transformation of a solution that fixes the origin is also a solution, a consequence of the 1-Laplacian's invariance under such mappings. Furthermore, solutions typically fail to satisfy the boundary condition pointwise, attaining it only in a very weak sense. For instance, if $\Omega$ is the unit ball and $f\equiv N$ (see that $\|f\|_{W^{-1,\infty}(\Omega)} = 1$), then any positive constant $u\equiv \alpha>0$ is a solution to~\eqref{eq:PbIntro2}.

The introduction of the nonlinearity $h$ in~\eqref{eq:PbIntro} may induce a regularizing effect, at least when $h$ is bounded at infinity and $f$ is nonnegative. As shown in~\cite{MaOlPe1}, existence can be established for any $f \in L^N(\Omega)$ if $\lim_{s\to\infty} h(s) = 0$, or even for $f$ merely in $L^1(\Omega)$, provided that $h(s)$ decays as $s^{-1}$ at infinity. These results hold even when $h$ is singular at the origin; indeed, in this case, solutions are strictly positive in $\Omega$ whenever $f>0$.

Motivated by the level set formulation of the inverse mean curvature flow (see~\cite{HuIl}), the authors in~\cite{MaSe} proposed the study of problem
\begin{equation}
\label{eq:PbIntro3}
	\begin{cases}
		-\Delta_1 u + |Du| = f & \text{in }\Omega,\\
		u=0 & \text{on } \partial\Omega,
	\end{cases}
\end{equation}
for a nonnegative datum $f$, which they initially addressed for $f\in L^q(\Omega)$ with $q>N$. Subsequently, this problem was investigated for $f\in L^{N,\infty}(\Omega)$ in~\cite{LaSe1}  and for $f\in L^1(\Omega)$ in~\cite{LaSe2}. The presence of the absorption gradient term exerts a powerful regularizing effect in several respects. First, a global $\bvo$ solution exists for any $f\in L^1(\Omega)$. Furthermore, this solution has no jump part and satisfies the boundary condition pointwise. Finally, as proved in~\cite{LaSe2} using the specific structure of~\eqref{eq:PbIntro3}, a comparison principle holds for this problem, which in turn guarantees the uniqueness of the solution.

A direct extension of~\eqref{eq:PbIntro3} is given by problem~\eqref{eq:PbIntro}. For a nonsingular function $g$, this problem has been investigated in~\cite{BalOP} for $0\leq f\in\luo$, under the assumption that $h(s)$ is bounded at infinity and dominated by a multiple of $s^{-1}$ near the origin. By using the classical strategy of~\cite{BoGa}, and under the assumption
\begin{equation}
    \label{eq:hyp_Intro1}
    \liminf_{s\to\infty} g(s) >0,
\end{equation}
the authors show the existence of a $\bvo$ solution to~\eqref{eq:PbIntro}, which has no jump part and satisfies the zero boundary condition in the sense of traces.

To the author's knowledge, the case of a singular gradient term has only been addressed in~\cite{Bal} under very restrictive conditions. Specifically, it is assumed therein that $f\in L^N(\Omega)$, and that, near the origin, $g(s)$ and $h(s)$ are controlled by $cs^{-\theta}$ and $cs^{-\gamma}$, respectively, where $c, \theta, \gamma>0$ and $\theta+\gamma\leq 1$. Under these assumptions, and further requiring~\eqref{eq:hyp_Intro1} and that $g$ and $h$ are bounded at infinity, the existence of a $\bvo$ solution to~\eqref{eq:PbIntro} with no jump part and zero boundary trace is established in~\cite{Bal}.

Without being exhaustive, other recent contributions to related 1-Laplacian problems can be found in~\cite{AbDaSe, FiPi, MaOlPe2, MoSe, PiSaSt, SaSe}. Regarding $p$-Laplacian problems with a singular natural gradient term, we refer the reader to the pioneering works~\cite{ABLP, ACLMOP}.

In the present article, problem~\eqref{eq:PbIntro} is thoroughly investigated, extending the existing literature in several directions. The premise of this work is to study in depth problem~\eqref{eq:PbIntro} under a broad and challenging framework. Concretely, it is assumed that:
\begin{enumerate}[i)]
    \item $f$ is a nonnegative function in $L^1(\Omega)$.
    \item $g,h\colon [0,\infty)\to [0,\infty]$ are continuous functions, finite outside the origin and possibly singular at zero, such that
    \begin{itemize}
        \item[--] $g$ is positive and remains integrable near zero, that is, $\lim_{s\to 0^+} \int_s^1 g(t)\, \mathrm{d}t < \infty$,
        \item[--] $h$ is bounded at infinity, that is, $\limsup_{s\to\infty} h(s) < \infty$.
    \end{itemize}
\end{enumerate}

It must be highlighted that, in this setting, $g$ may be unbounded at infinity. Furthermore, no growth restriction is imposed on $h$ near zero, while the only requirement for $g$ near the origin is the aforementioned integrability condition. Finally, it is worth noting that problem~\eqref{eq:PbIntro3} is naturally recovered as a particular case within this framework.

Regarding the existence of a global $BV(\Omega)$ solution, we consider two general cases: either $g$ degenerates at infinity or it does not. In the first case, we show that~\eqref{eq:hyp_Intro1} is sufficient to guarantee the existence of a solution to~\eqref{eq:PbIntro}, which represents a major improvement over the results in~\cite{Bal, BalOP}. In the second case, where $g$ does not satisfy~\eqref{eq:hyp_Intro1}, we prove that a solution to~\eqref{eq:PbIntro} exists provided that $h$ decays at least as $s^{-1}$ at infinity. To the author's knowledge, this is the first time that such a regularizing effect induced by $h$ has been observed for problems like~\eqref{eq:PbIntro}. This finding is somewhat surprising since, for $h\equiv 1$, a strong nonexistence phenomenon may arise (see the discussion in Section~\ref{sec:Weak_sol}).

In any case, the growth of $h$ near the origin is not controlled, and $g$ is only required to be integrable near zero. To deal with such arbitrary singular nonlinearities, we introduce a novel approach that takes advantage of the rigid structure of the 1-Laplacian operator, which can be extrapolated to other types of problems.

With respect to the uniqueness of solution, we show that a comparison principle for problem~\eqref{eq:PbIntro} holds whenever $h$ is a nonincreasing function. This generalizes the comparison principle established in~\cite{LaSe2} for problem~\eqref{eq:PbIntro3}.

Our study is further complemented by the exploration of additional properties of~\eqref{eq:PbIntro}. We provide several sufficient conditions to ensure the boundedness of the solutions; in particular, we show that this property is guaranteed if $f$ is integrable enough or if $h$ vanishes at some point. Furthermore, we investigate a weaker notion of solution, which accounts for cases where $u$ may not belong to the space $BV(\Omega)$.

The paper is organized as follows. In Section~\ref{sec:Prelim}, we introduce the technical tools required throughout the work, including a brief overview of $BV$ functions and Anzellotti's theory. In Section~\ref{sec:Main}, we present our main results concerning the existence of $BV(\Omega)$ solutions and the comparison principle for problem~\eqref{eq:PbIntro}. The existence results are proved in Section~\ref{sec:Pf_exist}, while the comparison principle is established in Section~\ref{sec:Pf_Comp}. Section~\ref{sec:Bound} is devoted to providing sufficient conditions that ensure the boundedness of the solutions. Finally, in Section~\ref{sec:Weak_sol}, we explore the existence under a weaker concept of solution, and in Section~\ref{sec:Extensions} we discuss several potential extensions of our results.

\section{Preliminaries}
\label{sec:Prelim}

\subsection{Notation}

Throughout this article, $\Omega$ is an open bounded subset of $\R^N$ ($N\geq 2$) with Lipschitz boundary. Given a set $E\subset \R^N$, $\mathcal{H}^{N-1}(\partial E)$ stands for the $(N-1)$-dimensional Hausdorff measure of its boundary, while $|E|$ denotes the classical $N$-dimensional Lebesgue measure $\mathcal{L}^N(E)$.

We say that $f_1<f_2$ in $E$ if $f_1(x)<f_2(x)$ for almost every $x\in E$. As for the integrals, we denote by $\into f$ the integral of $f$ with respect to the Lebesgue measure, whereas $\into f\mu$ denotes the integral of $f$ with respect to the measure $\mu$.

For a fixed $k>0$, we set $T_{k}$ and $G_{k}$ as the real functions defined for any $s\in \R$ as
\[
T_k(s):=\max \{-k,\min \{s,k\} \} \qquad \text{and} \qquad G_k(s):= \min\{s+k, \max\{0,s-k\}\}. 
\]
Note in particular that $T_k(s) + G_k(s)=s$ for every $s\in \mathbb{R}$. Moreover, we represent the characteristic function of a set $A\subseteq\Omega$ by $\chi_A$.

Finally, let us clarify that we use the term \textit{increasing} in the strict sense ($g(s_1)<g(s_2)$ for $s_1<s_2$), whereas \textit{nondecreasing} refers to the case where the inequality is not necessarily strict. An analogous convention applies to \textit{decreasing} and \textit{nonincreasing} functions.

\subsection{Functions of bounded variation} We denote by $\mathcal{M}(\Omega)$ the space of Radon measures with finite total variation over $\Omega$. The space of functions of bounded  variation is defined as
\[
BV(\Omega):= \left\{ u\in L^1(\Omega) : Du \in \mathcal{M}(\Omega)^N \right\},
\]
where $Du$ stands for the distributional gradient of $u$. Moreover, we denote by $BV_{\rm loc} (\Omega)$ the set of functions $u\in L^1_{\rm loc}(\Omega)$ that belong to $BV(\omega)$ for any open set $\omega$ compactly contained in $\Omega$. In the following, we summarize the fundamental properties of this space, mainly extracted from~\cite{AFP}.

First, every function in $\bvo$ has a trace in $L^1(\partial\Omega)$. This allows to define the norm
\[
\|u\|_{BV(\Omega)}=\int_\Omega|Du| + \int_{\partial\Omega} |u| \ \dH,\ \forall u\in\bvo,
\]
where $\int_\Omega|Du|$ denotes the total variation of the measure $Du$ over $\Omega$. We highlight that both the norm $\|\cdot\|_\bvo$ and the functional $u\mapsto \int_\Omega \varphi|Du|$ (for a fixed $0\leq \varphi\in C_c^1(\Omega)$) are lower semicontinuous with respect to the $L^1(\Omega)$ convergence.

Recall that for $u \in L_{\rm loc}^1(\Omega)$, we say that $u$ has an approximate limit at $x_0 \in \Omega$ if there exists $z\in \R$ such that 
\begin{equation*}
    \lim_{r\to 0^+} \frac{1}{|B_r(x_0)|}\int_{B_r(x_0)} |u(y) - z| \, \mathrm{d}y = 0.
\end{equation*}
The set $S_u$ of points where this property does not hold is called the approximate discontinuity set. This set $S_u$ is a $\mathcal{L}^N$-negligible Borel set (\cite[Proposition~3.64]{AFP}). For any $x_0\in \Omega\setminus S_u$ the number $z$, which is uniquely determined, is called the approximate limit of $u$ at $x_0$ and is denoted by $\widetilde{u}(x_0)=z$.

On the other hand, we say that $x_0\in\Omega$ is an approximate jump point of $u$ if there exist $a,b\in \R$ and a unit vector $\nu \in S^{N-1}$ such that $a\neq b$ and
\begin{align*}
    \lim_{r\to 0^+} \frac{1}{|B^+_r(x_0)|} \int_{B^+_{r}(x_0)} |u(y) - a| \, \mathrm{d}y &= 0, \\
    \lim_{r\to 0^+} \frac{1}{|B^-_r(x_0)|} \int_{B^-_{r}(x_0)} |u(y) - b| \, \mathrm{d}y &= 0,
\end{align*}
where $B_r^\pm(x_0) := \{y\in B_r(x_0): \pm (y-x_0)\cdot \nu >0\}$. The triplet $(a,b,\nu)$, uniquely determined up to a permutation of $(a,b)$ and a change of sign of $\nu$, is denoted by $(u^+(x_0), u^-(x_0), \nu_u(x_0))$. The set of approximate jump points is denoted by $J_u$. It is a Borel subset of $S_u$ (\cite[Proposition~3.69]{AFP}), and $\mathcal{H}^{N-1}(S_u \setminus J_u) = 0$ if $u \in BV(\Omega)$. Moreover, up to an $\mathcal{H}^{N-1}$-negligible set, $J_u$ is an $\mathcal{H}^{N-1}$-rectifiable set, and an orientation $\nu_u(x)$ is defined for $\mathcal{H}^{N-1}$-almost every $x \in J_u$ (\cite[Theorem~3.78]{AFP}).

The precise representative of $u$  is defined as the $\mathcal{H}^{N-1}$-a.e. finite function 
$u^*\colon \Omega\setminus(S_u\setminus J_u)\to \mathbb{R}$ given by 
\begin{equation*}
u^*(x)=
\begin{cases}
\widetilde{u}(x)& \text{ if } x\in \Omega\setminus S_u,\\
\frac{u^+(x)+u^-(x)}{2}& \text{ if } x\in J_u.
\end{cases}
\end{equation*}
The terminology for $u^*$ comes from the fact that, if $\rho_\varepsilon$ is a sequence of standard mollifiers, then $u \ast \rho_\varepsilon$ pointwise converges to $u^*$ in its domain. In the particular case where $\mathcal{H}^{N-1}(J_u)=0$, we write $u$ instead of $u^*$ for the precise representative of $u$, as no ambiguity is possible when integrating against a measure which is absolutely continuous with respect to $\mathcal{H}^{N-1}$. Hereafter, for simplicity, we write $u(x)=\widetilde u(x)$ whenever $x\in\Omega\setminus S_u$.

If $u\in\bvo$, then $Du$ can be decomposed as
\[
Du = D^a u + D^s u,
\]
where $D^a u$ is the absolutely continuous part of $Du$ with respect to the Lebesgue measure $\mathcal{L}^N$, and $D^s u$ is the singular part. The measure $D^a u$ is just $\nabla u \, \mathcal{L}^N$, where $\nabla u$ is the approximate gradient of $u$ (\cite[Definition~3.70]{AFP}). The singular part can be further decomposed as the sum of its Cantor and jump part as
\[
D^s u = D^c u + D^j u, \qquad D^c u := D^s u \resmes (\Omega\setminus S_u), \qquad D^j u:= D^s u \resmes J_u = (u^+ - u^-)\nu_u \mathcal{H}^{N-1} \resmes J_u.
\]
The sum $D^a u + D^c u$ is called the diffuse part of $Du$, and is denoted by $\widetilde D u$. An important feature of $\widetilde D u$ is that, for every $t\in\R$, it holds that $\widetilde D u \resmes \{u=t\} = 0$.

A chain rule formula for these functions is available.

\begin{lemma}{\cite[Theorem 3.96]{AFP}}
\label{lem:ChainRule_AFP}
Let $u\in BV_{\rm loc}(\Omega)$. If $\Psi \colon \R\to \R$ is a Lipschitz function, then $\Psi (u)\in BV_{\rm loc}(\Omega)$ and
\begin{equation}
\label{eq:chain_rule_1}
D\Psi(u) = \Psi'(u) \widetilde D u + (\Psi(u^+) - \Psi(u^-)) \nu_u \mathcal{H}^{N-1} \resmes J_u.
\end{equation}
\end{lemma}

\begin{remark}
The chain rule can be rewritten in a more concise form. With this aim, given a locally integrable function $\beta\colon \R\to\R$ and $v\in BV_{\rm loc}(\Omega)$, we set
\begin{equation}
\label{eq:def_beta_hash}
\beta(v)^\# := \begin{cases}
    \frac{1}{v^+ - v^-} \int_{v^-}^{v^+} \beta(s) \, \mathrm{d}s, & \text{ if } x\in J_v,\\
    \beta(v), & \text{ otherwise}.
\end{cases}
\end{equation}
Observe that $\beta(v)^\#$ is a particular representative of $\beta(v)$ and coincides with $\beta(v)^*$ if $\beta$ is an affine function, but in general $\beta(v)^\# \neq \beta(v)^*$. By means of this function, the chain rule~\eqref{eq:chain_rule_1} can be expressed as
\begin{equation}
\label{eq:chain_rule_2}
D\Psi(u) = \Psi'(u)^\# D u.
\end{equation}
\end{remark}

In this work, we mainly deal with functions without a jump part. In such cases, a more general version of the chain rule holds, provided that the composition is known to belong to $BV_{\rm loc}(\Omega)$. In essence, this has been shown in~\cite[Lemma~2.4]{GOP}, although we add some clarifications.

\begin{lemma}
\label{lem:ChainRule}
Let $0\leq u\in BV_{\rm loc}(\Omega)$ be such that $D^j u = 0$, and let $g\colon [0,\infty) \to (0,\infty]$ be a continuous function finite outside the origin. Let $\Gamma\colon [0,\infty) \to [-\infty, \infty)$ be such that $\Gamma'(s) = g(s)$, and suppose that $\Gamma(u)\in BV_{\rm loc}(\Omega)$. Then $g(u) \in L^1_{\rm loc}(\Omega, |Du|)$ and it holds
\[
\chi_{\{u>0\}}|D\Gamma(u)| = g(u) |Du| \text{ as measures in } \Omega.
\]
Consequently, if $\Gamma(0)$ is finite, then
\[
|D\Gamma(u)| = g(u) |Du| \text{ as measures in } \Omega.
\]
\end{lemma}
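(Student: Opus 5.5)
The plan is to truncate away from the origin, where everything is Lipschitz, and then let the truncation level go to zero using monotone convergence of measures. Fix $\varepsilon>0$ and work on the level sets $\{u>\varepsilon\}$. On $[\varepsilon,\infty)$ the function $\Gamma$ is $C^1$ with derivative $g$, but it need not be globally Lipschitz since $g$ may be unbounded at infinity. We therefore truncate twice: for $0<\varepsilon<k$ set
\[
\Psi_{\varepsilon,k}(s):=\Gamma\big(\max\{\varepsilon,\min\{s,k\}\}\big)-\Gamma(\varepsilon),
\]
which is Lipschitz on $\R$ with $\Psi_{\varepsilon,k}'=g\,\chi_{(\varepsilon,k)}$ a.e. Since $D^ju=0$, Lemma~\ref{lem:ChainRule_AFP} gives
\[
D\Psi_{\varepsilon,k}(u)=g(u)\chi_{\{\varepsilon<u<k\}}\widetilde Du = g(u)\chi_{\{\varepsilon<u<k\}}Du.
\]
Here the level sets $\{u=\varepsilon\}$ and $\{u=k\}$ carry no diffuse mass, so it does not matter whether the endpoints are included. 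Hence $|D\Psi_{\varepsilon,k}(u)|=g(u)\chi_{\{\varepsilon<u<k\}}|Du|$.

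Next we relate this to $D\Gamma(u)$. Note that $\Psi_{\varepsilon,k}(u)=\Phi_{\varepsilon,k}(\Gamma(u))$, where $\Phi_{\varepsilon,k}(t)=\min\{\max\{t,\Gamma(\varepsilon)\},\Gamma(k)\}-\Gamma(\varepsilon)$ is a $1$-Lipschitz truncation; this uses that $\Gamma$ is nondecreasing because $g>0$. Since $\Gamma(u)\in BV_{\rm loc}(\Omega)$, Lemma~\ref{lem:ChainRule_AFP} applied to $\Gamma(u)$ gives
\[
|D\Psi_{\varepsilon,k}(u)|=\chi_{\{\Gamma(\varepsilon)<\Gamma(u)<\Gamma(k)\}}|\widetilde D\Gamma(u)| + \text{(jump contribution)}.
\]
We must show that $\Gamma(u)$ has no jump in $\{u>0\}$. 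Because $\Gamma$ is a strictly increasing homeomorphism on $(0,\infty)$, approximate limits and jump points of $u$ and $\Gamma(u)$ correspond on $\{u^->0\}$. Thus $J_{\Gamma(u)}\cap\{\Gamma(u)^->\Gamma(0)\}\subset J_u$, and $\mathcal H^{N-1}(J_u)=0$. Moreover $\{\varepsilon<u<k\}$ coincides with $\{\Gamma(\varepsilon)<\Gamma(u)<\Gamma(k)\}$. This yields
\[
\chi_{\{\varepsilon<u<k\}}|D\Gamma(u)|=g(u)\chi_{\{\varepsilon<u<k\}}|Du|
\]
as measures. The point needing care is that these sets are defined through precise representatives up to $|D\Gamma(u)|$- and $|Du|$-null sets. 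I would handle it by noting that both measures vanish on $\mathcal H^{N-1}$-null sets, and that off $S_u$ one has $\widetilde{\Gamma(u)}=\Gamma(\widetilde u)$.

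Finally let $k\to\infty$ and $\varepsilon\to0^+$. The sets $\{\varepsilon<u<k\}$ increase to $\{u>0\}$ up to the level sets $\{u=k\}$, which are null for the diffuse measures involved. Monotone convergence then gives $\chi_{\{u>0\}}|D\Gamma(u)|=g(u)\chi_{\{u>0\}}|Du|$. The left side is locally finite, so $g(u)\chi_{\{u>0\}}\in L^1_{\rm loc}(\Omega,|Du|)$. On $\{u=0\}$ we have $|Du|\resmes\{u=0\}=0$, since $Du$ is diffuse. Hence $g(u)\in L^1_{\rm loc}(\Omega,|Du|)$, and the right side equals $g(u)|Du|$, with $g(0)$ possibly $+\infty$ times a null measure, interpreted as zero. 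This proves the first identity.

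For the second identity, assume $\Gamma(0)$ is finite. Then $\Gamma$ is continuous and nondecreasing on $[0,\infty)$, so $\{u=0\}=\{\Gamma(u)=\Gamma(0)\}$. We need $|D\Gamma(u)|\resmes\{u=0\}=0$. The diffuse part vanishes on the level set $\{\Gamma(u)=\Gamma(0)\}$. A jump of $\Gamma(u)$ with $\Gamma(u)^-=\Gamma(0)<\Gamma(u)^+$ would force $u^-=0<u^+$, that is, a jump of $u$, which cannot occur on a set of positive $\mathcal H^{N-1}$ measure. The main obstacle is this jump-set bookkeeping: showing $J_{\Gamma(u)}\subset J_u$ up to $\mathcal H^{N-1}$-null sets, including near $u=0$ where $\Gamma$ may equal $-\infty$. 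I would argue it directly from the definition of approximate limits, using the continuity and strict monotonicity of $\Gamma$ (with $\Gamma^{-1}$ continuous on the range).
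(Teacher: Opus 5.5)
Your proposal is correct and follows essentially the same route as the paper. Truncating away from $0$ and $\infty$, then applying the classical chain rule and monotone convergence, gives the identity on $\{u>0\}$ (the paper delegates this step to \cite[Lemma~2.4]{GOP}), and the fact that diffuse measures do not charge level sets \cite[Proposition~3.92]{AFP} handles $\{u=0\}$; your jump-set comparison, which strictly gives $J_{\Gamma(u)}\subset S_u$ (enough, since $\mathcal{H}^{N-1}(S_u)=0$ when $D^ju=0$), also justifies the paper's unproved remark that $\Gamma(u)$ has no jump part.
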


\begin{proof}
In~\cite[Lemma~2.4]{GOP}, the authors use a truncation argument and the classical chain rule to show that
\[
\chi_{\{u>0\}}|D\Gamma(u)| = g(u) \chi_{\{u>0\}} |Du| \text{ as measures in } \Omega.
\]
To prove this, it is essential to use that $\Gamma(u)\in BV_{\rm loc}(\Omega)$. Since $D^j u =0$, then $Du$ only has a diffuse part. Then, as a consequence of~\cite[Proposition 3.92]{AFP}, $D u \resmes \{u=t\} = 0$ for every $t\in\R$; in particular, $Du$ vanishes at $\{u=0\}$. Therefore, one has
\[
g(u) \chi_{\{u>0\}} |Du| = g(u) |Du|.
\]
If $\Gamma(0)$ is finite, taking into account that $\Gamma(u)\in BV_{\rm loc}(\Omega)$ and has no jump part, one can apply again~\cite[Proposition 3.92]{AFP} to ensure that $D\Gamma(u)$ vanishes at $\{\Gamma(u)=\Gamma(0)\}$. Since $\Gamma$ is increasing, then $\{u=0\}$ is equal to $\{\Gamma(u)=\Gamma(0)\}$. This ends the proof.
\end{proof}

Finally, we recall that the classical embedding $W_0^{1,1}(\Omega) \hookrightarrow L^{\frac{N}{N-1}}(\Omega)$ extends to the space $\bvo$ by means of a density argument. Then, it holds that
\begin{equation}
\label{eq:sob_embed}
\|u\|_{L^\frac{N}{N-1}(\Omega)} \leq \mathcal{S}_1 \|u\|_\bvo,\ \forall u\in\bvo,
\end{equation}
where $\mathcal{S}_1$ denotes the best constant of the embedding. Note that $1^*=\frac{N}{N-1}$ is the critical Sobolev exponent.

\subsection{$L^\infty$-divergence-measure vector fields}

These vector fields play a fundamental role in problems involving the 1-Laplacian operator. With the goal of establishing a generalized Gauss-Green formula, this theory began to be independently developed by Anzellotti (\cite{Anz}) and Chen and Frid (\cite{ChenFrid}). To be precise, let us define the set
\[
\DM(\Omega):= \big\{ z\in L^\infty(\Omega)^N : \operatorname{div}z \in \mathcal{M}(\Omega) \big\},
\]
and let us denote by $\DM_{\rm loc}(\Omega)$ its local version, i.e., the set of bounded vector fields $z$ with $z\in \mathcal{M}_{\rm loc} (\Omega)$. It is worth noting that, as proved in~\cite[Proposition~3.1]{ChenFrid}, $\operatorname{div}z$ is absolutely continuous with respect to the measure $\mathcal{H}^{N-1}$ whenever $z\in \DM (\Omega)$. Given a vector field $z\in \DM_{\rm loc} (\Omega)$ and a function $u\in BV_{\rm loc}(\Omega) \cap L^\infty_{\rm loc}(\Omega)$, we define the distribution $(z,Du) \colon C_c^\infty(\Omega) \to \R$ as
\begin{equation} \label{eq:Pairing}
\langle(z,Du),\varphi\rangle:=-\int_\Omega u^*\varphi\operatorname{div}z-\int_\Omega uz\cdot\nabla\varphi, \,\quad \forall \varphi\in C_c^\infty(\Omega),
\end{equation}
This distribution defines a generalized dot product between $z$ and $Du$ and is, in fact, a Radon measure that has, locally, finite total variation (\cite{Ca}). Furthermore, for every Borel set $B$ and every open set $U$ with $B \subset U \subset \Omega$ it holds
\begin{equation}
\label{eq:AbsCont}
    \left| \int_B (z, Du) \right| \leq \int_B |(z, Du)| \leq \|z\|_{L^{\infty}(U)^N} \int_B |Du|.
\end{equation}
The measure $(z,Du)$ actually generalizes the usual dot product. It admits a decomposition into two parts with respect to the $N$-dimensional Lebesgue measure $\mathcal{L}^N$: an absolutely continuous part and a singular one. The absolutely continuous part, denoted by $(z,Du)^a$, satisfies that $(z,Du)^a = z\cdot D^a u = z\cdot \nabla u\, \mathcal{L}^N$ (see~\cite[Theorem~4.12]{CrDec}). Then, for functions $\varphi \in W^{1,1}(\Omega)$, the pairing $(z,D\varphi)$ just becomes $(z,D\varphi) = z\cdot \nabla \varphi\, \mathcal{L}^N$.

As a direct consequence of~\eqref{eq:AbsCont}, $(z,Du)$ is absolutely continuous with respect to the measure $|Du|$. Denoting by \(\theta(z, Du, \cdot)\)
the Radon-Nikodym derivative of $(z, Du)$ with respect to $|Du|$, it holds
\begin{equation*}
	(z, Du) = \theta(z,Du,x) \, {|Du|} \text{ as measures in } \Omega.
\end{equation*}
It can be deduced that this derivative behaves well with respect to the composition, as the following result shows.

\begin{lemma}
    \label{lem:Composition}
	Let $z \in \DM_{\rm loc}(\Omega)$ and $u \in BV_{\rm loc}(\Omega) \cap \lio$. Let $\Psi: I \to \mathbb{R}$ be a continuous nondecreasing function, with $I \subseteq \mathbb{R}$ an interval. If $u(\Omega)\subseteq I$ and 
	$\Psi(u) \in BV_{\rm loc}(\Omega) \cap \lio$, then 
	\[
    \theta(z, D\Psi(u),x) = \theta(z, Du,x) \quad \text{for } |D\Psi(u)|\text{-}\ae\  x \in \Omega.
	\]
    As a consequence, $(z,Du)=|Du|$ as measures implies $(z,D\Psi(u))= |D\Psi(u)|$ as measures.
\end{lemma}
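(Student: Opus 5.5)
This is essentially the known result of Anzellotti~\cite{Anz} (Proposition~2.8 there), stated for $\Psi$ Lipschitz and increasing. The plan is to reduce the general case to that one by a regularization of $\Psi$ and a localization argument.

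\emph{Step 1: $\Psi$ Lipschitz and increasing.} In this case the identity is proved by a layer-cake (coarea) decomposition of the pairing. For $u\in BV_{\rm loc}\cap L^\infty$ and $z\in\DM_{\rm loc}$ one has, as measures,
\[
(z,Du)=\int_{-\infty}^{\infty} (z,D\chi_{\{u>t\}})\,\mathrm{d}t,
\qquad
|Du|=\int_{-\infty}^{\infty} |D\chi_{\{u>t\}}|\,\mathrm{d}t .
\]
Since $\{\Psi(u)>\Psi(t)\}=\{u>t\}$ when $\Psi$ is increasing, a change of variables $s=\Psi(t)$ gives
\[
(z,D\Psi(u))=\int (z,D\chi_{\{u>t\}})\,\Psi'(t)\,\mathrm{d}t,
\qquad
|D\Psi(u)|=\int |D\chi_{\{u>t\}}|\,\Psi'(t)\,\mathrm{d}t .
\]
From these formulas, $\theta(z,D\Psi(u),\cdot)=\theta(z,D\chi_{\{u>t\}},\cdot)=\theta(z,Du,\cdot)$ holds $|D\chi_{\{u>t\}}|$-a.e.\ for a.e.\ $t$. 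This yields the equality $|D\Psi(u)|$-a.e. I would quote this step from Anzellotti rather than reprove it.

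\emph{Step 2: $\Psi$ merely continuous and nondecreasing.} Since $u$ is bounded, we may restrict $\Psi$ to a compact interval $[a,b]\subseteq I$ containing the essential range of $u$. The idea is to work directly with the coarea representation, which does not require Lipschitz continuity. The function $w=\Psi(u)$ is in $BV_{\rm loc}\cap L^\infty$, and its superlevel sets are $\{w>s\}=\{u>\Psi^{-1}_+(s)\}$, where $\Psi^{-1}_+(s)=\sup\{t:\Psi(t)\le s\}$. Hence every superlevel set of $w$ is a superlevel set of $u$. Applying the coarea formula for the pairing to $w$ gives
\[
(z,Dw)=\int (z,D\chi_{\{u>\tau(s)\}})\,\mathrm{d}s,
\qquad
|Dw|=\int |D\chi_{\{u>\tau(s)\}}|\,\mathrm{d}s,
\qquad \tau=\Psi^{-1}_+ .
\]
It then suffices to know that for every $t$ (not just a.e.\ $t$) with $\chi_{\{u>t\}}\in BV_{\rm loc}$ one has $\theta(z,D\chi_{\{u>t\}},\cdot)=\theta(z,Du,\cdot)$, $|D\chi_{\{u>t\}}|$-a.e.

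\emph{Main obstacle.} The difficulty sits exactly here: the level values $\tau(s)$ may concentrate on a Lebesgue-null set of $t$'s, namely where $\Psi$ is flat or where its inverse is singular. On such a set the a.e.-$t$ statement of Step 1 gives no information.

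To handle it, I would instead approximate. Set $\Psi_\varepsilon(t)=\Psi(t)+\varepsilon t$, mollified so that it is Lipschitz and increasing on $[a,b]$. Step 1 then gives $(z,D\Psi_\varepsilon(u))=\theta(z,Du,\cdot)\,|D\Psi_\varepsilon(u)|$. Next use the decomposition
\[
\Psi_\varepsilon(u)=\Psi(u)\ast(\text{mollification in the } t\text{ variable})+\varepsilon u .
\]
The comparison is cleanest via the level-set formulas. Both $|D\Psi_\varepsilon(u)|$ and $(z,D\Psi_\varepsilon(u))$ are superpositions of the level-set measures of $u$ with weights $\mathrm{d}\Psi_\varepsilon(t)$. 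Letting $\varepsilon\to0$, the weights converge to the Stieltjes measure $\mathrm{d}\Psi(t)$. This gives the representation
\[
(z,D\Psi(u))=\int(z,D\chi_{\{u>t\}})\,\mathrm{d}\Psi(t),
\qquad
|D\Psi(u)|=\int|D\chi_{\{u>t\}}|\,\mathrm{d}\Psi(t).
\]
The level sets of $u$ that are charged by $\mathrm{d}\Psi$ but are Lebesgue-null in $t$ would need the pointwise-in-$t$ density identity. I would try to obtain it from the fact that $(z,D\chi_E)$ is determined by the normal trace of $z$ on $\partial^*E$, together with nestedness of the level sets, using the Radon--Nikodym derivative at points of $\partial^*\{u>t\}$.

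\emph{Consequence.} The final assertion is immediate. If $(z,Du)=|Du|$, then $\theta(z,Du,\cdot)=1$ $|Du|$-a.e. Since $|D\Psi(u)|\ll|Du|$ (from the level-set representation), also $\theta(z,D\Psi(u),\cdot)=1$ $|D\Psi(u)|$-a.e., which gives $(z,D\Psi(u))=|D\Psi(u)|$.
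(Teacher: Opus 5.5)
Your Step~1 is fine; it is the Lipschitz case, \cite[Proposition~4.5]{CrDec}. Step~2, however, is not a proof, and it cannot be completed along the lines you sketch.

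The identity you want for \emph{every} level is not meaningful. You ask for $\theta(z,D\chi_{\{u>t\}},\cdot)=\theta(z,Du,\cdot)$ $|D\chi_{\{u>t\}}|$-a.e. But for a fixed $t$ the measure $|D\chi_{\{u>t\}}|$ is typically singular with respect to $|Du|$, while $\theta(z,Du,\cdot)$ is only determined $|Du|$-a.e. This is why the level-set characterization holds only for a.e.\ $t$.

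The same defect breaks your final paragraph, because $|D\Psi(u)|\ll|Du|$ is false for non-Lipschitz $\Psi$. Take $\Omega=(0,1)^N$, $u(x)=x_1$, and $\Psi$ continuous and strictly increasing with $\Psi'=0$ a.e.\ (e.g.\ Minkowski's question-mark function). Then $|Du|=\mathcal{L}^N$ and $|D\chi_{\{u>t\}}|=\mathcal{H}^{N-1}\resmes\{x_1=t\}$. By contrast, $|D\Psi(u)|=\mathrm{d}\Psi\otimes\mathcal{L}^{N-1}$ lives on an $\mathcal{L}^N$-null set. So knowing $\theta(z,Du,\cdot)=1$ $|Du|$-a.e.\ says nothing $|D\Psi(u)|$-a.e. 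In this generality the density identity itself is only meaningful once $|D\Psi(u)|\ll|Du|$, or once a representative of $\theta(z,Du,\cdot)$ has been fixed.

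Finally, the limit $\varepsilon\to0$ that is supposed to produce the Stieltjes representation of $(z,D\Psi(u))$ is only asserted. Weak convergence of the weights $\mathrm{d}\Psi_\varepsilon$ does not let you pass to the limit against the merely measurable family $t\mapsto(z,D\chi_{\{u>t\}})$.

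Your approximation is the right tool, but you must pass to the limit in the measure identity, not in the densities. Doing so proves the consequence, at least when $\|z\|_{L^\infty(\Omega)^N}\le1$, which is the only case used in the paper. The argument goes as follows.
\begin{itemize}
\item Assume $(z,Du)=|Du|$, and let $\Psi_\varepsilon$ be Lipschitz and increasing with $\Psi_\varepsilon\to\Psi$ uniformly on $[a,b]$. Step~1, together with $|D\Psi_\varepsilon(u)|\ll|Du|$, gives $(z,D\Psi_\varepsilon(u))=|D\Psi_\varepsilon(u)|$.
\item For $0\le\varphi\in C_c^\infty(\Omega)$, lower semicontinuity gives $\int_\Omega\varphi|D\Psi(u)|\le\liminf_{\varepsilon}\int_\Omega\varphi\,(z,D\Psi_\varepsilon(u))$.
\item By \eqref{eq:Pairing}, this limit equals $\int_\Omega\varphi\,(z,D\Psi(u))$. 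Indeed, $\Psi_\varepsilon(u)^*\to\Psi(u)^*$ boundedly $\mathcal{H}^{N-1}$-a.e.; check this at approximate continuity points and at jump points of $u$, using $\mathcal{H}^{N-1}(S_u\setminus J_u)=0$. Moreover $|\operatorname{div}z|\ll\mathcal{H}^{N-1}$.
\item Hence $|D\Psi(u)|\le(z,D\Psi(u))$, and \eqref{eq:AbsCont} gives the reverse inequality.
\end{itemize}
No individual level set is ever examined.

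The paper proceeds differently. For strictly increasing $\Psi$ it uses $\{u>t\}=\{\Psi(u)>\Psi(t)\}$ and applies \cite[Theorem~4.2]{CrDec} separately to $u$ and to $\Psi(u)$, concluding with the coarea formula. It reaches nondecreasing $\Psi$ by approximation as in \cite[Proposition~2.7]{LaSe2}. Matching the two ``a.e.\ level'' statements there is exactly where your null-set concern sits. It is harmless for Lipschitz $\Psi$. It is also harmless whenever $\Psi$ and $\Psi^{-1}$ map Lebesgue-null sets to null sets, e.g.\ $\Gamma$ on bounded intervals. Between them, these two cases cover all compositions used later.
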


\begin{remark}
\label{rem:Composition_incr}
If $\Psi$ is increasing, then $\theta(z, D\Psi(u),x) = \theta(z, Du,x)$ for $|Du|$-${\rm a.e.}\ x \in \Omega$. In this case, $(z,Du)=|Du|$ if and only if $(z,D\Psi(u))= |D\Psi(u)|$.
\end{remark}

\begin{remark}
As a consequence of the chain rule (Lemma~
\ref{lem:ChainRule_AFP}), a sufficient condition for $\Psi(u)$ to belong to $BV_{\rm loc}(\Omega) \cap \lio$ is that $\Psi$ is a Lipschitz function. In this particular case, Lemma~\ref{lem:Composition} has been shown in~\cite[Proposition $4.5$]{CrDec}.
\end{remark}

\begin{proof}[Proof of Lemma~\ref{lem:Composition}]
The proof relies on a classical argument from \cite{Anz}. First, we address the case where $\Psi$ is increasing. We denote the level sets of $u$ and $\Psi(u)$ by
\[
E_{u,t} := \{x \in \Omega : u(x) > t\} \qquad \text{ and } \qquad E_{\Psi(u), \Psi(t)} := \{x \in \Omega : \Psi(u(x)) > \Psi(t)\}.
\]
Since $\Psi$ is increasing, these sets are identical for every $t \in \mathbb{R}$. Consequently, it holds that
\begin{equation} \label{eq:Pf_L_Comp_1}
    D\chi_{E_{u,t}} = D\chi_{E_{\Psi(u), \Psi(t)}}.
\end{equation}
Taking into account that $u, \Psi(u) \in BV_{\rm loc}(\Omega) \cap L^\infty(\Omega)$, an application of \cite[Theorem~4.2]{CrDec} leads to
\begin{gather}
    \label{eq:Pf_L_Comp_2}
    \theta(z, Du, x) = \theta(z, D\chi_{E_{u,t}}, x) \quad \text{for } |D\chi_{E_{u,t}}|\text{-a.e. } x \in \Omega, \\
    \label{eq:Pf_L_Comp_3}
    \theta(z, D\Psi(u), x) = \theta(z, D\chi_{E_{\Psi(u), \Psi(t)}}, x) \quad \text{for } |D\chi_{E_{\Psi(u), \Psi(t)}}| \text{-a.e. } x \in \Omega,
\end{gather}
for almost every $t \in \mathbb{R}$.
Therefore, gathering together \eqref{eq:Pf_L_Comp_1}, \eqref{eq:Pf_L_Comp_2} and \eqref{eq:Pf_L_Comp_3}, for almost every $t\in\R$ we have that
\begin{equation}
\label{eq:Pf_L_Comp_4}
\theta(z, Du, x) = \theta(z, D\chi_{E_{u,t}}, x) = \theta(z, D\chi_{E_{\Psi(u), \Psi(t)}},x) = \theta(z, D\Psi(u), x),
\end{equation}
for $|D\chi_{E_{u,t}}|$-$\ae$ $x \in \Omega$. Thanks to coarea formula for the total variation,~\eqref{eq:Pf_L_Comp_4} also holds for $|Du|$-a.e. $x \in \Omega$.

Finally, the case of a nondecreasing function $\Psi$ can be handled as in~\cite[Proposition~2.7]{LaSe2}, using an approximation argument on the results already proved for increasing compositions.
\end{proof}

Another result regarding compositions is given below. It has been shown in~\cite[Proposition~3.3]{LaSe2} when $D^j u =0$. For completeness, we include here its proof in the case of a general $BV$-function.

\begin{lemma}
	\label{lem:Composition2}
	Let $z\in \DM_{\rm loc}(\Omega)$ and $u\in BV_{\rm loc}(\Omega)$ be such that $(z,DT_k(u)) = |DT_k(u)|$ for every $k>0$. Let $\Psi: I \to \mathbb{R}$ be a continuous nondecreasing function, with $I \subseteq \mathbb{R}$ an interval. If $u(\Omega)\subseteq I$ and 
	$\Psi(u) \in BV_{\rm loc}(\Omega) \cap L_{\rm loc}^\infty(\Omega)$, then 
	\[
	(z,D\Psi(u)) = |D\Psi(u)| \text{ as measures.}
	\]
\end{lemma}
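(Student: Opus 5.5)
The plan is to reduce to Lemma~\ref{lem:Composition}, applied on each truncation level, by writing $\Psi(u)$ locally as a composition of a truncate of $u$. The difficulty is that $u$ itself need not be locally bounded, so Lemma~\ref{lem:Composition} cannot be applied to $u$ directly. Since $\Psi(u)$ is locally bounded, however, $\Psi$ is bounded on the values $u$ takes on each $\omega\Subset\Omega$.

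First I fix an open set $\omega\Subset\Omega$ and $k>0$, and set $\Psi_k:=\Psi\circ T_k$ on $T_k(I)\subseteq I$. This is a continuous nondecreasing function, because $I$ is an interval and $T_k(s)$ lies between $s$ and $0$ or between $\pm k$ and $s$; if $0\notin I$, I use the clamp onto $I\cap[-k,k]$ instead. Next I check that $\Psi_k(u)=\Psi(T_k(u))\in BV_{\rm loc}(\Omega)\cap L^\infty_{\rm loc}$. This follows from the identity
\[
\Psi(T_k(u))=\max\{\min\{\Psi(u),\Psi(k)\},\Psi(-k)\},
\]
which holds because $\Psi$ is monotone. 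Truncation of the $BV_{\rm loc}\cap L^\infty_{\rm loc}$ function $\Psi(u)$ between two constants is a Lipschitz operation, so the claim follows.

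With $T_k(u)\in BV_{\rm loc}\cap L^\infty$ and $(z,DT_k(u))=|DT_k(u)|$, Lemma~\ref{lem:Composition} (applied on $\omega$, with $T_k(u)$ in place of $u$) gives
\[
(z,D\Psi(T_k(u)))=|D\Psi(T_k(u))| \quad\text{as measures in } \omega.
\]

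Finally I let $k\to\infty$. Pick $M$ with $|\Psi(u)|\le M$ on $\omega$. There are two cases.
\begin{enumerate}[i)]
\item If $\Psi$ stays below $M$ on all of $I$, then $\Psi(T_k(u))\to\Psi(u)$ in $L^1(\omega)$ and boundedly in $L^\infty$. The cleaner route, however, is to avoid limits and use an exact identity. Choose $a_k,b_k$ with $\Psi(u)=\max\{\min\{\Psi(T_k(u)),b\},a\}$, which is possible whenever the values $\pm k$ of $T_k(u)$ are mapped to values outside $(-M,M)$.
\item In general, I take the limit instead. The pairing $(z,D\Psi(T_k(u)))$ converges to $(z,D\Psi(u))$ in distributions by definition~\eqref{eq:Pairing}, since $\Psi(T_k(u))^*\to\Psi(u)^*$ $|\operatorname{div}z|$-a.e. with uniform bounds. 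The point here is that $\operatorname{div}z\ll\mathcal H^{N-1}$ and precise representatives converge $\mathcal H^{N-1}$-a.e.
\end{enumerate}
In both cases this yields $(z,D\Psi(u))\ge\limsup|D\Psi(T_k(u))|$ tested against $\varphi\ge0$. Lower semicontinuity of the total variation then gives $\int\varphi|D\Psi(u)|\le\liminf\int\varphi|D\Psi(T_k(u))|=\int\varphi\,(z,D\Psi(u))$. Combined with the reverse inequality $(z,D\Psi(u))\le\|z\|_\infty|D\Psi(u)|$ from~\eqref{eq:AbsCont}, this gives equality, provided $\|z\|_\infty\le1$.

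\textbf{Main obstacle.} I expect the main obstacle to be that the statement does not assume $\|z\|_\infty\le1$, although this is implicit from $(z,DT_k(u))=|DT_k(u)|$ only on the support of $|DT_k(u)|$. I would therefore first try to get equality directly, without the upper bound, via the exact-identity route in case i). On the set $\{|u|<k\}$, the measures $D\Psi(u)$ and $D\Psi(T_k(u))$ agree, as do the pairings by locality of the pairing on level sets (\cite[Theorem~4.2]{CrDec}, as in the proof of Lemma~\ref{lem:Composition}). The set $\{|u|\ge k\}$ shrinks as $k\to\infty$, and $|D\Psi(u)|$ of it tends to zero. So equality on $\bigcup_k\{|u|<k\}$ suffices.
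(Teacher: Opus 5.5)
Your skeleton matches the paper's. You apply Lemma~\ref{lem:Composition} to the bounded function $T_k(u)$ to get $(z,D\Psi(T_k(u)))=|D\Psi(T_k(u))|$. You then pass to the limit in the pairing through definition~\eqref{eq:Pairing}, by dominated convergence with respect to $|\operatorname{div}z|\ll\mathcal H^{N-1}$. Your justification of that step, and of $\Psi(T_k(u))\in BV_{\rm loc}(\Omega)$ via truncation of $\Psi(u)$, is more careful than the paper's.

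The gap is on the total-variation side. Lower semicontinuity only yields $\int\varphi|D\Psi(u)|\le\int\varphi\,(z,D\Psi(u))$. To close the argument you invoke~\eqref{eq:AbsCont} with $\|z\|_{L^\infty(\Omega)^N}\le1$, but this is not a hypothesis of the lemma. The assumption $(z,DT_k(u))=|DT_k(u)|$ only forces $\theta(z,DT_k(u),\cdot)=1$ $|DT_k(u)|$-a.e.; it says nothing about the size of $z$ elsewhere. As written, you therefore prove the lemma only under this extra assumption. That would suffice for every application in the paper, where the vector fields always satisfy $\|z\|_{L^\infty(\Omega)^N}\le1$, but it is not the stated result. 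Your case i) does not help, and it is inconsistent: if $\Psi$ stays below $M$ on $I$, then $\Psi(k)$ never reaches $M$. Case ii) alone handles the pairing.

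The missing step is that the total variations actually converge, not merely lower semicontinuously: $\int\varphi|D\Psi(T_k(u))|\uparrow\int\varphi|D\Psi(u)|$ for $0\le\varphi\in C_c^1(\Omega)$. This is exactly what the paper uses, through monotone convergence. It follows from your own identity $\Psi(T_k(u))=\max\{\min\{\Psi(u),\Psi(k)\},\Psi(-k)\}$ and the coarea formula:
\begin{enumerate}[i)]
\item For $t\in[\Psi(-k),\Psi(k))$ one has $\{\Psi(T_k(u))>t\}=\{\Psi(u)>t\}$, and otherwise this set is $\emptyset$ or $\Omega$.
\item Hence $|D\Psi(T_k(u))|(B)=\int_{\Psi(-k)}^{\Psi(k)}P(\{\Psi(u)>t\},B)\,\mathrm{d}t\uparrow|D\Psi(u)|(B)$, where $P(\cdot,B)$ is the perimeter in $B$.
\item Then $\int\varphi|D\Psi(u)|=\lim_k\int\varphi|D\Psi(T_k(u))|=\lim_k\int\varphi\,(z,D\Psi(T_k(u)))=\int\varphi\,(z,D\Psi(u))$, with no bound on $z$ needed.
\end{enumerate}

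Your ``main obstacle'' paragraph points in this direction, but it is only a sketch. To make it work you would need to:
\begin{enumerate}[i)]
\item define the sets $\{|u|<k\}$ through approximate limits and jump values;
\item handle jump points that straddle level $k$;
\item prove that the pairings agree there, for instance via Lemma~\ref{lem:Composition} applied to $\Psi(u)$ and the Lipschitz truncation $t\mapsto\max\{\min\{t,\Psi(k)\},\Psi(-k)\}$.
\end{enumerate}
The coarea computation above avoids all of that.
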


\begin{proof}
Let $\varphi\in C_c^1(\Omega)$ be nonnegative. Using Lemma~\ref{lem:Composition}, we obtain that $(z,D\Psi(T_k(u))) = |D\Psi(T_k(u))|$ for every $k>0$. On the one hand, as $\Psi(u)\in BV_{\rm loc}(\Omega)$, the Monotone Convergence Theorem gives that
\[
\lim_{k\to \infty} \into \varphi |D\Psi(T_k(u))| = \lim_{k\to \infty} \int_{\{u< k\}} \varphi |D\Psi(u)| = \into \varphi |D\Psi(u)|.
\]
On the other hand, taking into account that $\Psi(u)\in BV_{\rm loc}(\Omega) \cap L^\infty_{\rm loc}(\Omega)$ and~\eqref{eq:Pairing}, we can use the Lebesgue Theorem to obtain that
\begin{align*}
\lim_{k\to \infty} \into \varphi (z,D\Psi(T_k(u)) &= \lim_{k\to \infty} \left(-\int_\Omega \Psi(T_k(u))^*\varphi\operatorname{div}z-\int_\Omega \Psi(T_k(u)) z\cdot\nabla\varphi \right)\\
&= -\int_\Omega \Psi(u)^*\varphi\operatorname{div}z-\int_\Omega \Psi(u) z\cdot\nabla\varphi = \into \varphi (z,D\Psi(u)).
\end{align*}
This concludes the proof.
\end{proof}

A general Leibniz rule for the pairing also holds. In~\cite[Proposition~4.11]{CrDec}, it is shown that if $z\in \DM_{\rm loc}(\Omega)$ and $u,v\in BV_{\rm loc}(\Omega)\cap L^\infty_{\rm loc} (\Omega)$, then
\begin{equation}
\label{eq:Prod}
    (z, D(uv)) = u^*(z,Dv) + v^*(z,Du).
\end{equation}

Since $\Omega$ has a Lipschitz boundary, the outward normal unit vector $\nu(x)$ is defined for $\mathcal H^{N-1}$-almost every $x\in\partial\Omega$. In~\cite{Anz}, it is shown that every $z \in \mathcal{DM}^{\infty}(\Omega)$ possesses
a weak trace on $\partial \Omega$ of the
normal component of $z$ which is denoted by
$[z, \nu]$. This notion of weak trace generalizes the classical one, in the sense that $[z, \nu] = z\cdot \nu$ on $\partial\Omega$ if $z\in C^1(\overline{\Omega}, \R^N)$. Moreover, it verifies
\begin{equation*}
\| [z,\nu] \|_{L^\infty (\partial\Omega)}\leq \|z\|_{L^\infty(\Omega)^N}.
\end{equation*}

A Green-type formula involving the pairing $(z,Du)$ and the weak trace $[z,\nu]$ is stated as follows. 

\begin{proposition}{\cite[Proposition~2.6]{DecGiSe}}
Let $z\in \DM_{\rm loc}(\Omega)$ and let $u\in BV(\Omega) \cap \lio$ be such that $u^*\in L^1(\Omega, \operatorname{div}z)$. Then $uz\in \DM(\Omega)$ and it holds
\begin{equation} \label{eq:Green}
	\int_{\Omega} u^* \operatorname{div}z + \int_{\Omega} (z, Du) =
	\int_{\partial \Omega} [uz, \nu] \ \dH.
\end{equation}
\end{proposition}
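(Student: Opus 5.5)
The plan is the standard route through Anzellotti's theory: first treat $z\in\DM(\Omega)$ globally, then localize with a cutoff. In the global case I approximate $u$ by smooth functions and apply the classical divergence theorem. Take $u_n\in C^\infty(\Omega)\cap BV(\Omega)$ with $u_n\to u$ in $L^1(\Omega)$, $\int_\Omega|Du_n|\to\int_\Omega|Du|$, $\|u_n\|_\infty\le\|u\|_\infty$ and $u_n|_{\partial\Omega}=u|_{\partial\Omega}$; this is the strict approximation of \cite{AFP}, which preserves traces. For each $n$ the field $u_nz$ satisfies $\operatorname{div}(u_nz)=u_n\operatorname{div}z+z\cdot\nabla u_n$. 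Anzellotti's Green formula \cite{Anz} for $z\in\DM(\Omega)$ and $u\in BV\cap L^\infty$ already reads $\int_\Omega u^*\operatorname{div}z+\int_\Omega(z,Du)=\int_{\partial\Omega}[z,\nu]u\,\dH$. That formula is exactly the global case, so the real content of the statement is twofold: the identity $[uz,\nu]=u[z,\nu]$, and the passage to $z\in\DM_{\rm loc}(\Omega)$ with only $u^*\in L^1(\Omega,\operatorname{div}z)$.

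Next I prove $uz\in\DM(\Omega)$. By the Leibniz rule for the pairing (the same computation underlying~\eqref{eq:Prod}), the distribution $\operatorname{div}(uz)$ equals $u^*\operatorname{div}z+(z,Du)$ locally: for $\varphi\in C_c^\infty(\Omega)$ this is just the definition~\eqref{eq:Pairing} rearranged. It is therefore a Radon measure. Its total variation is bounded by $\int_\Omega|u^*||\operatorname{div}z|+\|z\|_\infty\int_\Omega|Du|$, using~\eqref{eq:AbsCont} and the hypothesis $u^*\in L^1(\Omega,\operatorname{div}z)$. This requires $z\in L^\infty(\Omega)^N$, which is implicit in $\DM_{\rm loc}$ as defined: bounded fields with locally finite divergence. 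Hence $uz\in\DM(\Omega)$ and its weak normal trace $[uz,\nu]$ exists.

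To get~\eqref{eq:Green}, I apply the Gauss--Green formula for $\DM(\Omega)$ fields from \cite{Anz} to $uz$ tested against the constant $1$:
\[
\int_\Omega\operatorname{div}(uz)=\int_{\partial\Omega}[uz,\nu]\,\dH .
\]
Rigorously, I take cutoffs $\eta_j\in C_c^\infty(\Omega)$ with $0\le\eta_j\le1$, $\eta_j\uparrow1$, and write $\int_\Omega\eta_j\operatorname{div}(uz)=-\int_\Omega uz\cdot\nabla\eta_j$. The right-hand side converges to the boundary integral by the very definition (or characterization) of the weak trace in \cite{Anz}: the normal trace is defined through such layer integrals for Lipschitz domains. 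The left-hand side converges to $\int_\Omega u^*\operatorname{div}z+\int_\Omega(z,Du)$ by dominated convergence, again using $u^*\in L^1(\Omega,\operatorname{div}z)$ and the finiteness of $|(z,Du)|(\Omega)$.

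The main obstacle is this last limit: identifying $\lim_j-\int_\Omega uz\cdot\nabla\eta_j$ with $\int_{\partial\Omega}[uz,\nu]\,\dH$ when $z$ itself need not lie in $\DM(\Omega)$. I would handle it by noting that only $uz\in\DM(\Omega)$ is needed there, and that Anzellotti's trace for $\DM(\Omega)$ fields is continuous with respect to such boundary-layer approximations. This is proved by extending a test function $w\in W^{1,1}\cap C(\overline\Omega)$ with $w=1$ on $\partial\Omega$ and writing $1-\eta_j$ in place of $w$ near the boundary. The choice of $\eta_j$ as functions of the distance to $\partial\Omega$, whose gradient approximates $-\nu\,\dH$ on $\partial\Omega$ for Lipschitz $\Omega$, makes this work.
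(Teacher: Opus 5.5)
Your argument is correct and is the standard proof of this result; the paper gives no proof of its own, only the reference to \cite{DecGiSe}. The core runs as follows: by~\eqref{eq:Pairing}, $\operatorname{div}(uz)=u^*\operatorname{div}z+(z,Du)$ in $\mathcal{D}'(\Omega)$; the hypothesis $u^*\in L^1(\Omega,\operatorname{div}z)$ together with~\eqref{eq:AbsCont} makes this a finite measure, so $uz\in\DM(\Omega)$; and Anzellotti's Green formula applied to $uz$ with the constant test function $1$ gives~\eqref{eq:Green}. Your strict-approximation paragraph and your remark about $[uz,\nu]=u[z,\nu]$ are unnecessary, and the latter is not even meaningful when $z\notin\DM(\Omega)$. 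Likewise, your cutoff limit holds for any $0\le\eta_j\le1$ in $C_c^\infty(\Omega)$ with $\eta_j\to1$ pointwise, simply by applying the same Green formula for $uz$ to $1-\eta_j$, so no distance-function construction is needed.
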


In the case where $z\in\DM(\Omega)$ and $u\in\bvo\cap\lio$, the normal trace satisfies (see~\cite[Lemma~5.6]{Ca})
\[
[uz,\nu] = u[z,\nu] \ \ \mathcal{H}^{N-1}\text{-}\ae \text{ on } \partial\Omega.
\]
If the regularity is relaxed, i.e, if $z\in\DM_{\rm loc}(\Omega)$ and $u\in\bvo\cap\lio$ are such that $uz\in\DM(\Omega)$, then it holds (see~\cite[Proposition~2.7]{DecGiSe})
\[
\big| [uz,\nu] \big| \leq |u| \cdot \|z\|_{L^\infty(\Omega)^N} \ \ \mathcal{H}^{N-1}\text{-}\ae \text{ on } \partial\Omega.
\]

\subsection{A technical lemma on the jump part of the solutions}

To prove that solutions to~\eqref{eq:PbMain} have no jump part, we follow a strategy introduced in~\cite{BalOP} that involves the definition of a pairing different (though similar) from~\eqref{eq:Pairing}.

Let $\beta\colon \R\to\R$ be a locally Lipschitz function. Given $z\in \DM_{\rm loc}(\Omega)$ and $v\in BV_{\rm loc}(\Omega)$ with $\beta(v)\in BV_{\rm loc}(\Omega) \cap L_{\rm loc}^\infty(\Omega)$, we define the distribution $\left(z,D\beta(v)^\# \right) \colon C_c^\infty(\Omega) \to \R$ as
\begin{equation}
\label{eq:new_Pairing}
\left\langle \left(z,D\beta(v)^\# \right),\varphi \right\rangle:=-\int_\Omega \beta(v)^\# \varphi\operatorname{div}z-\int_\Omega \beta(v) z\cdot\nabla\varphi, \,\quad \forall \varphi\in C_c^\infty(\Omega),
\end{equation}
where $\beta(v)^\#$ is the representative of $\beta(v)$ defined in~\eqref{eq:def_beta_hash}.

In~\cite[Lemma~2.5]{MaSe} it is proved that, when $v\in BV_{\rm loc}(\Omega) \cap L^\infty_{\rm loc}(\Omega)$ and $\beta$ is a $C^1$ function, the pairing $\left(z,D\beta(v)^\# \right)$ is a Radon measure satisfying, for each Borel set $B$ and each open set $U$ such that $B\subset U\subset \Omega$, that
\begin{equation*}
    \left| \int_B \left(z, D\beta(v)^\# \right) \right| \leq \int_B \left| \left(z, D\beta(v)^\# \right) \right| \leq \|z\|_{L^{\infty}(U)^N} \int_B |D\beta(v)|.
\end{equation*}
The method employed in~\cite{MaSe} can be adapted, with only minor modifications, to show that this property also holds if $v\in BV_{\rm loc}(\Omega)$ and $\beta$ is merely a locally Lipschitz function, provided that $\beta(v)\in BV_{\rm loc}(\Omega) \cap L_{\rm loc}^\infty(\Omega)$.

The next result states that functions $v\in BV_{\rm loc}(\Omega)$ verifying a distributional inequality involving its gradient have no jump part.

\begin{lemma}{\cite[Lemma~2.3]{BalOP}}
\label{lem:no_jump}
Let $\beta \colon \R\to \R$ be a locally Lipschitz, increasing function. Let $z\in \DM_{\rm loc} (\Omega)$ be such that ${\|z\|_{\lio^N}\leq 1}$, and let $v$ satisfy $v\in BV_{\rm loc}(\Omega)$ and $\beta(v)\in BV_{\rm loc}(\Omega) \cap L^\infty_{\rm loc}(\Omega)$. Given $f\in L^1_{\rm loc}(\Omega)$, assume that
\begin{equation}
\label{eq:No_jump_H}
-\operatorname{div}z + |Dv| \leq f \qquad \text{and} \qquad \left(z, D\beta(v)^\# \right) = |D\beta(v)|.
\end{equation}
Then $D^j v =0$.
\end{lemma}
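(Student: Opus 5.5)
The idea is to localize the inequality \eqref{eq:No_jump_H} on the jump set $J_v$ and compare two estimates for the restriction of $\operatorname{div}z$ there: an upper bound coming from the pairing identity, and a lower bound coming from the inequality. Since $f\in L^1_{\rm loc}(\Omega)$ vanishes on $\mathcal{H}^{N-1}$-$\sigma$-finite sets, restricting \eqref{eq:No_jump_H} to $J_v$ gives
\[
-\operatorname{div}z \resmes J_v + |D^j v| \leq 0,
\]
that is, $\operatorname{div}z\resmes J_v \geq |v^+-v^-|\,\mathcal{H}^{N-1}\resmes J_v$. The term $f$ drops out because $J_v$ is Lebesgue-null. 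Recall also that $\operatorname{div}z \ll \mathcal{H}^{N-1}$ by \cite{ChenFrid}, so this restriction is meaningful.

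For the upper bound I compare the two pairings $(z,D\beta(v)^\#)$ and $(z,D\beta(v))$, the latter being the Anzellotti pairing \eqref{eq:Pairing} with the precise representative $\beta(v)^*$. Their definitions differ only in the representative multiplying $\operatorname{div}z$, so
\[
(z,D\beta(v)^\#) - (z,D\beta(v)) = \big(\beta(v)^* - \beta(v)^\#\big)\operatorname{div}z,
\]
and this measure is supported on $J_v$. On $J_v$, the general theory of the Anzellotti pairing (\cite{CrDec}) gives
\[
(z,D\beta(v))\resmes J_v = \tfrac{1}{2}\big([z,\nu_v]^+ + [z,\nu_v]^-\big)\,(\beta(v^+)-\beta(v^-))\,\mathcal{H}^{N-1}\resmes J_v,
\]
and $\operatorname{div}z\resmes J_v = ([z,\nu_v]^+ - [z,\nu_v]^-)\,\mathcal{H}^{N-1}\resmes J_v$, where $[z,\nu_v]^\pm$ are the one-sided normal traces, each bounded by $1$ in absolute value. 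Now orient $\nu_v$ so that $v^+>v^-$, and write $a=[z,\nu_v]^+$, $b=[z,\nu_v]^-$, $\Delta=\beta(v^+)-\beta(v^-)>0$ (positive since $\beta$ is increasing). Combining the three displays, the hypothesis $(z,D\beta(v)^\#) = |D\beta(v)| = \Delta\,\mathcal{H}^{N-1}$ on $J_v$ becomes, at $\mathcal{H}^{N-1}$-a.e. point of $J_v$,
\[
\tfrac{a+b}{2}\,\Delta + \big(\beta(v)^*-\beta(v)^\#\big)(a-b) = \Delta .
\]
Here $\beta(v)^*-\beta(v)^\# = \tfrac{\beta(v^+)+\beta(v^-)}{2} - \tfrac{1}{v^+-v^-}\int_{v^-}^{v^+}\beta$.

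To finish, combine this identity with $a-b\ge v^+-v^->0$ (from the first step) and $|a|,|b|\le1$. The constraint $|a|,|b|\le 1$ forces $a-b\le 2$ and $(a+b)/2\le 1-\tfrac{a-b}{2}$. Substituting, the identity should only be compatible with $a-b>0$ if $\beta(v)^*-\beta(v)^\#$ is large, which an elementary estimate on $\beta$ rules out. This is the step I expect to be the main obstacle: the sign of $\beta(v)^*-\beta(v)^\#$ depends on the convexity of $\beta$, so the crude bounds above are not enough and I will need the sharp relation. The cleanest route, following \cite{BalOP}, is to apply the pairing identity on sub-intervals. By Lemma \ref{lem:Composition}, $(z,D\beta(T(v))^\#)=|D\beta(T(v))|$ for truncations $T$ that cut the jump $[v^-,v^+]$ at arbitrary intermediate levels $s$. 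This yields $\tfrac{a+b}{2}=1$ after letting $s$ vary, hence $a=b=1$, which contradicts $a-b\ge v^+-v^->0$. Therefore $\mathcal{H}^{N-1}(J_v)=0$, and so $D^jv=0$.
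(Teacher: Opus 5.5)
The paper only quotes this lemma from \cite{BalOP}, so the comparison is with the standard argument. Your setup follows that route and is correct: restricting \eqref{eq:No_jump_H} to $J_v$ gives $a-b\ge v^+-v^->0$, and your identity on $J_v$ is right. The gap is the final step, and the route you propose to close it does not work. You claim the crude bounds are insufficient because the sign of $\beta(v)^*-\beta(v)^\#$ depends on the convexity of $\beta$, and you then invoke Lemma~\ref{lem:Composition} to transfer $(z,D\beta(v)^\#)=|D\beta(v)|$ to truncations $\beta(T(v))$. That lemma concerns the Anzellotti pairing \eqref{eq:Pairing}, built with the precise representative. Its hypothesis would be an identity for $(z,D\beta(v))$, which you do not have: the two pairings differ exactly on $J_v$, where the whole argument takes place. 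Nothing in the paper gives the truncated $\#$-identities. Moreover, on $J_v$ they are equivalent to $a=b=1$, which is what you want to prove, so this route is circular. ``Letting $s$ vary'' does not supply an argument either.

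The missing observation is already contained in your own computation, and convexity plays no role. Set $P=\beta(v^+)-\beta(v)^\#$ and $Q=\beta(v)^\#-\beta(v^-)$. Since $\beta$ is increasing and $v^-<v^+$, the mean value $\beta(v)^\#$ lies strictly between $\beta(v^-)$ and $\beta(v^+)$, so $P,Q>0$. Your identity is then exactly
\[
aP+bQ=P+Q .
\]
Since $a\le 1$ and $b\le 1$, this forces $a=b=1$, contradicting $a-b>0$.

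Equivalently, insert your own bound $\frac{a+b}{2}\le 1-\frac{a-b}{2}$. This gives $(a-b)\bigl(\frac{\Delta}{2}-(\beta(v)^*-\beta(v)^\#)\bigr)\le 0$, and $\frac{\Delta}{2}-(\beta(v)^*-\beta(v)^\#)=Q>0$, so $a\le b$. Hence $\mathcal{H}^{N-1}(J_v)=0$, i.e. $D^jv=0$.

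The argument works with any representative strictly between $\beta(v^-)$ and $\beta(v^+)$, for instance $\beta(v)^*$. The $\#$ representative is needed elsewhere: in Step 4 of the existence proof, the pairing identity is derived through the chain rule \eqref{eq:chain_rule_2}.
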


\begin{remark}
\label{rem:no_jump}
As noted in the proof of~\cite[Theorem~5.3]{BalOP}, this result remains valid under a slight variation of~\eqref{eq:No_jump_H}. More precisely, assuming $v\geq 0$, Lemma~\ref{lem:no_jump} also holds if~\eqref{eq:No_jump_H} is replaced by the conditions $-\chi_{\{v>0\}}^* \operatorname{div}z + |Dv| \leq f$ and $\chi_{\{v>0\}}^* \left(z, D\beta(v)^\# \right) = \chi_{\{v>0\}}^* |D\beta(v)|$.
\end{remark}

\section{Main results}
\label{sec:Main}

In this section, we present several novel existence and uniqueness results concerning the problem
\begin{equation}
	\label{eq:PbMain}
	\begin{cases}
		-\Delta_1 u + g(u)|Du| = h(u)f & \text{in }\Omega,\\
		u=0 & \text{on } \partial\Omega,
	\end{cases}
\end{equation}
where $f>0$ belongs to $\luo$. We always assume that $g\colon [0,\infty) \to (0,\infty]$ is a continuous function, finite outside the origin, which is integrable near zero, i.e.,
\begin{equation}
    \label{eq:hyp_g_int}
    \lim_{s\to 0^+} \int_s^1 g(t)\, \mathrm{d}t < \infty.
\end{equation}
Notice that~\eqref{eq:hyp_g_int} allows $g$ to have mild singularities near the origin. For instance, $g(s)$ may behave near zero as $s^{-\theta}$ with $\theta\in[0,1)$. In what follows, we denote
\[
\Gamma(s) = \int_0^s g(t) \, \mathrm{d}t,\ \forall s\geq 0.
\]
Observe that $\Gamma$ is increasing and, since we always assume~\eqref{eq:hyp_g_int}, it holds that $\Gamma(0)=0$.

On the other hand, we always suppose that $h\colon [0,\infty) \to [0,\infty]$ is a continuous function, finite outside the origin, such that
\begin{equation}
    \label{eq:hyp_h_inf}
    h(\infty) := \limsup_{s\to\infty} h(s) < \infty.
\end{equation}
We stress that no control of $h$ near zero is imposed; consequently, $h$ may exhibit any growth at the origin. Typical examples verifying this condition include $h(s) = s^{-\gamma}$ with $\gamma\geq 0$ or $h(s) = e^\frac{1}{s}$.

We explicitly remark that these assumptions cover the case where $g$ and $h$ are finite at the origin. In particular, both $g$ and $h$ may be constant functions.

For this problem, the concept of solution is the following.

\begin{defin}
\label{def:sol}
Let $0<f\in\luo$. A nonnegative function $u\in BV(\Omega)$ is a \textit{solution} to problem~\eqref{eq:PbMain} if $D^j u=0$, $\Gamma(u)\in \bvo$, $h(u)f \in \luo$ and if there exists $z\in \DM(\Omega)$ with $\|z\|_{L^\infty(\Omega)^N}\leq 1$ such that
\begin{gather} 
    \label{eq:def_dist}
    -\operatorname{div}z + g(u)|Du| = h(u)f \text{ as measures in } \Omega,\\
    \label{eq:def_pair}
    (z,DT_k(u))=|DT_k(u)| \text{ as measures in } \Omega \text{ for any } k>0,\\
    \label{eq:def_bord}
    u(x)=0 \text{ for  $\mathcal{H}^{N-1}$-a.e. } x \in \partial\Omega.		
\end{gather}
\end{defin}

\begin{remark}
Since $D^ju=0$ and $\Gamma(u)\in\bvo$, Lemma~\ref{lem:ChainRule} implies that $g(u)|Du|$ is a finite Radon measure and that the identity $|D\Gamma(u)| = g(u)|Du|$ holds.
\end{remark}

\begin{remark}
If $h$ is singular at zero (i.e. $h(0)=\infty$), then any solution $u$ to~\eqref{eq:PbMain} is strictly positive in $\Omega$; indeed, since $h(u)f \in \luo$ and $f>0$, the set $\{u=0\}$ must have measure zero. Conversely, if $h(0)<\infty$, problem~\eqref{eq:PbMain} may exhibit a degenerate behaviour, allowing for the existence of trivial solutions even for nontrivial datum $f$ (see~\cite[Proposition~4.4]{LaSe1}).
\end{remark}

In the above definition, the vector field $z$ plays the role of the quotient $\frac{Du}{|Du|}$ that formally appears in the 1-Laplacian operator. This approach was first introduced in the pioneering article~\cite{AnBaCaMa} and, since then, has been widely adopted in the literature. In~\eqref{eq:def_dist}, $z$ \textit{replaces} $\frac{Du}{|Du|}$ within the divergence operator, while~\eqref{eq:def_pair} asserts that $z$ actually \textit{is} the quotient $\frac{Du}{|Du|}$.

Note also that solutions to~\eqref{eq:PbMain} attain the Dirichlet boundary condition pointwise. This regularity is due to the presence of the absorption gradient term. In its absence, the boundary condition must be understood in a very weak sense, and solutions with a non-zero boundary trace may arise (see, for instance,~\cite{DecGOP, MeSeTr}).

\subsection{Existence of finite energy solutions}

We first study the case where $g$ does not degenerate at infinity, i.e., we suppose that $g$ satisfies
\begin{equation}
    \label{eq:hyp_g_inf}
    \liminf_{s\to\infty} g(s) >0.
\end{equation}
We point out that $g$ can be unbounded at infinity. Some model functions verifying both~\eqref{eq:hyp_g_int} and~\eqref{eq:hyp_g_inf} are $g\equiv 1$ or $g(s) = 1+s^{-\theta}$ with $\theta\in(0,1)$.

The use of the structure condition~\eqref{eq:hyp_g_inf} in problems involving natural growth terms to obtain finite energy solutions is widespread in the literature, dating back to the seminal work~\cite{BoGa}. In the context of 1-Laplace problems as~\eqref{eq:PbMain}, this assumption is natural and appears in~\cite{Bal, BalOP, LaSe1, LaSe2}.

Our first existence result is the following.

\begin{theorem}
\label{th:Exist}
Let $0<f\in L^1(\Omega)$. Assume that $g$ verifies~\eqref{eq:hyp_g_int} and that $h$ satisfies~\eqref{eq:hyp_h_inf}. If~\eqref{eq:hyp_g_inf} holds, then problem~\eqref{eq:PbMain} has a nonnegative solution $u\in BV(\Omega)$.
\end{theorem}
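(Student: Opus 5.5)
We prove Theorem~\ref{th:Exist} by approximating with $p$-Laplacian problems and letting $p\to1^+$. For $p>1$ and $n\in\N$ we take truncated, nonsingular data
\[
g_n(s):=T_n(g(s+\tfrac1n)),\qquad h_n(s):=T_n(h(s+\tfrac1n)),\qquad f_n:=T_n(f),
\]
and let $u_p$ be a nonnegative weak solution in $W_0^{1,p}(\Omega)\cap\lio$ of
\[
-\Delta_p u_p + g_n(u_p)|\nabla u_p|^p = h_n(u_p)f_n .
\]
Such a solution exists by the classical Bensoussan--Boccardo--Murat theory. To keep the notation short we first pass $p\to1$ at fixed $n$ and then let $n\to\infty$; the estimates below are uniform in both parameters, so we could also choose $n=n(p)$ and pass to a diagonal sequence.

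\textbf{Step 1: a priori estimates.}
\begin{enumerate}[(a)]
\item By \eqref{eq:hyp_h_inf} there are $s_0\ge 1$ and $M>0$ with $h\le M$ on $[s_0,\infty)$. By \eqref{eq:hyp_g_inf} we may also take $s_0$ so large that $g\ge\mu>0$ on $[s_0,\infty)$.
\item We test with $G_{s_0}(u_p)$ and drop the nonnegative principal term. This gives $\mu\int_{\{u_p>s_0\}}|\nabla u_p|^p\le M\|f\|_1$.
\item We test with $T_{s_0}(u_p)$ and use Young's inequality $|\nabla u|\le \tfrac1p|\nabla u|^p+\tfrac{p-1}{p}$. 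This yields a uniform bound on $\int|\nabla u_p|$ on $\{u_p>s_0\}$, and hence $L^1$ bounds on $u_p$.
\item The region near zero, where $h$ is uncontrolled, is handled by the rigid structure of the operator. Set $\Psi(s)=\int_0^s e^{-\Gamma(t)}\,dt$ (or a suitable variant) and test with $e^{\Gamma(u_p)}\varphi$, in the spirit of the change of variables that removes the gradient term. Applied to $\Gamma(u_p)$ and $T_k(u_p)$ with $k\le s_0$, this should bound $\int|\nabla\Gamma(T_{s_0}(u_p))|^p$, and hence $g(u_p)|\nabla u_p|$ on $\{u_p\le s_0\}$, by $\int h_n(u_p)f_n$ on sets where $u_p$ is small.
\item To control $\int h_n(u_p)f_n\chi_{\{u_p<\delta\}}$ uniformly, we would first prove a uniform positive lower bound $u_p\ge c_\omega>0$ on each $\omega\subset\subset\Omega$. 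We get it by comparison with the solution for the datum $\min(h,1)f$, using that $g$ is integrable at $0$.
\item We then work with local tests $\varphi\in C_c^1(\Omega)$, and recover globality only for $\Gamma(u)$, $u$ and $g(u)|Du|$, whose bounds are derived above independently of $h$ near zero.
\end{enumerate}

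\textbf{Step 2: passing to the limit.} From these bounds, $u_p\to u$ in $L^1$ and a.e., with $\Gamma(u)\in BV(\Omega)$. By the standard argument of \cite{AnBaCaMa} and \cite{MaSe}, $|\nabla u_p|^{p-2}\nabla u_p\rightharpoonup z$ weakly in $L^q$ for all $q$, with $\|z\|_\infty\le1$. The right-hand side $h(u)f\in L^1_{\rm loc}$ follows from Fatou's lemma and the local lower bound. For \eqref{eq:def_dist} we first get the inequality $-\operatorname{div}z+g(u)|Du|\le h(u)f$ by lower semicontinuity, using $\Gamma$ and Lemma~\ref{lem:ChainRule}. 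The pairing \eqref{eq:def_pair} follows by testing with $T_k(u_p)\varphi$, using lower semicontinuity and \eqref{eq:Green}. Lemma~\ref{lem:no_jump}, with Remark~\ref{rem:no_jump}, then gives $D^ju=0$. The reverse inequality in \eqref{eq:def_dist} comes from testing with $e^{-\Gamma(T_k(u_p))}\varphi$-type functions, which absorb the gradient term exactly in the limit. Finally, the boundary condition \eqref{eq:def_bord} is obtained by testing with $T_k(u_p)$ and comparing $\int_{\partial\Omega}$ terms, using $\Gamma(0)=0$ and $g>0$: the absorption makes the trace term $\int_{\partial\Omega}\Gamma(u)$ cancel or vanish.

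\textbf{Main obstacle.} The hardest step is the control of $h(u_p)f$ near $u=0$ without any growth condition on $h$. We need both the integrability $h(u)f\in L^1(\Omega)$ globally, including near $\partial\Omega$ where $u\to0$, and the justification of the limit equation there. Our first attempt is to use the exponential test $(1-e^{-\Gamma(u_p)})$-type functions, which vanish at $u=0$ exactly where $h$ blows up, combined with the identity $\int h(u)f\,\phi(u)=\int(\ldots)$ that the 1-Laplacian's invariance under increasing reparametrizations allows. If this fails, we fall back on a local-in-$\Omega$ construction and add an a posteriori argument showing $\operatorname{div}z\in\mathcal{M}(\Omega)$.
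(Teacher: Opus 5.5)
There is a genuine gap, and it sits exactly at what you call the ``main obstacle'': you never obtain a uniform bound on $\int_\Omega h_n(u_p)f_n$, yet your other estimates depend on it. In (c) the right-hand side is $\int_\Omega h_n(u_p)f_nT_{s_0}(u_p)$, and for $h(s)=e^{1/s}$ the product $sh(s)$ is unbounded near $0$. In (d) you bound $\Gamma(T_{s_0}(u_p))$ by the same uncontrolled quantity. So the ``bounds independent of $h$ near zero'' for $u$, $\Gamma(u)$ and $g(u)|Du|$ are circular. Step (e) does not repair this. A comparison with the problem with datum $\min\{h,1\}f$ needs a right-hand side monotone in the unknown and ordered data, and neither is available here. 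Even granted, an interior bound $u_p\ge c_\omega$ only gives $h(u)f\in L^1_{\rm loc}$, whereas Definition~\ref{def:sol} requires $h(u)f\in L^1(\Omega)$, $\Gamma(u)\in BV(\Omega)$ and $z\in\mathcal{DM}^\infty(\Omega)$ globally.

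Your tentative test functions $1-e^{-\Gamma(u_p)}$ go in the wrong direction: they vanish where $h$ blows up. The missing idea is the opposite choice. In the approximating \emph{1-Laplacian} problem, test with $e^{-\Gamma_n(u_n)}$, which equals $1$ on $\partial\Omega$. Since $(z_n,De^{-\Gamma_n(u_n)})=-|De^{-\Gamma_n(u_n)}|=-g_n(u_n)e^{-\Gamma_n(u_n)}|Du_n|$, the divergence and gradient terms cancel exactly, and \eqref{eq:Green} leaves
\[
\int_\Omega h_n(u_n)e^{-\Gamma_n(u_n)}f=-\int_{\partial\Omega}[z_n,\nu]\,\mathrm{d}\mathcal{H}^{N-1}\le\mathcal{H}^{N-1}(\partial\Omega).
\]
By \eqref{eq:hyp_g_int}, $e^{-\Gamma_n(u_n)}\ge e^{-\Gamma(1)-1}$ on $\{u_n\le1\}$, which bounds $\int_{\{u_n\le1\}}h_n(u_n)f$, and \eqref{eq:hyp_h_inf} handles $\{u_n>1\}$. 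Testing with $1$ and with $T_k(u_n)$ then gives the $BV$ bounds on $\Gamma_n(u_n)$ and $u_n$. This uses $\|[z_n,\nu]\|_\infty\le1$, which has no uniform analogue for $|\nabla u_p|^{p-2}\nabla u_p$. That is why the paper approximates directly by 1-Laplacian problems with nonsingular $g_n=T_n(g+\frac1n)$, $h_n=T_n(h)$ (solvable by Theorem~4.4 of \cite{BalOP}), not by $p$-Laplacians.

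The passage to the limit has a gap of the same nature. Testing with $T_k(u_p)\varphi$ to get \eqref{eq:def_pair}, or with $T_k(u_p)$ for the boundary condition, requires the limit of $\int_\Omega T_k(u_p)h_n(u_p)f_n\varphi$. Without equi-integrability near $\{u=0\}$ you only have Fatou, which gives the inequality in the wrong direction. The paper avoids this in three ways. First, it proves \eqref{eq:prov3} with $e^{-\Gamma_n(u_n)}\varphi$, where Fatou goes the right way. Second, it gets $-\operatorname{div}z+|D\Gamma(u)|\le h(u)f$, for singular $h$ via cut-offs $S_\delta(u_n)\varphi$ that vanish on $\{u_n<\delta\}$. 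Combining these in the chain \eqref{eq:prov11} gives at once the $\#$-pairing identity for Lemma~\ref{lem:no_jump} (so $D^ju=0$), then \eqref{eq:def_pair} by composition, and equality in \eqref{eq:def_dist}. Third, it proves the boundary condition with tests $G_\delta(T_{1+\delta}(u_n))$ that vanish on $\{u_n\le\delta\}$.

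A minor point: in (b), testing with $G_{s_0}(u_p)$ gives right-hand side $M\int_\Omega fG_{s_0}(u_p)$, not $M\|f\|_1$. You need $T_1(G_{s_0}(u_p))$ instead.
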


Our result generalizes the existing literature in several respects. First, it extends the results of~\cite{BalOP}, where the existence of solution is established when $g$ is nonsingular and $h$ is controlled from above near zero by a function of the form $cs^{-\gamma}$, with $c>0$ and $\gamma\in[0,1]$. In contrast, Theorem~\ref{th:Exist} shows that existence holds independently of the behaviour of $h$ near the origin. 

On the other hand, until now, the existence of a solution to~\eqref{eq:PbMain} when $g$ is singular at zero was only known under the assumption that $f\in L^N(\Omega)$ and that, near the origin, $g(s)\leq cs^{-\theta}$ and $h(s)\leq cs^{-\gamma}$, where $c,\theta,\gamma>0$ and $\theta+\gamma\leq 1$. Within the framework of singular gradient terms, Theorem~\ref{th:Exist} represents a major improvement, since $f$ is allowed to belong to $\luo$ and the only assumption near zero is the integrability condition~\eqref{eq:hyp_g_int}.

The situation becomes more delicate when assumption~\eqref{eq:hyp_g_inf} is dropped and $g$ degenerates at infinity. To the author's knowledge, this case has only been previously investigated when $g$ is nonsingular and $h\equiv 1$ in~\cite{LaSe1}. In this setting, the velocity at which $g$ tends to 0 at infinity plays a decisive role. If $g$ is not integrable at infinity, there exists a unique solution to~\eqref{eq:PbMain} in a weaker sense than that of Definition~\ref{def:sol}; however, this solution generally does not belong to $BV(\Omega)$. By contrast, if $g$ is integrable at infinity, a nonexistence phenomenon arises when $W^{-1,\infty}(\Omega)$-norm of $f$ is sufficiently large (see Section~\ref{sec:Weak_sol} for further discussion).

The next result shows that a global $BV$ solution to~\eqref{eq:PbMain} always exists provided that $h(s)$ decays at least like $s^{-1}$ at infinity, independently of the behaviour of $g(s)$ as $s$ diverges. Furthermore, if $f$ is more summable, the decay assumption on $h$ at infinity can be relaxed.

\begin{theorem}
\label{th:Exist2}
Let $0<f\in L^m(\Omega)$ with $m\geq 1$. Assume that $g$ verifies~\eqref{eq:hyp_g_int} and that $h$ satisfies~\eqref{eq:hyp_h_inf}. Then problem~\eqref{eq:PbMain} has a nonnegative solution $u\in BV(\Omega)$ if one of the following cases holds:
\begin{enumerate}
    \item[(i)] $m=1$ and $\limsup_{s\to \infty} h(s)s < \infty$,
    \item[(ii)] $m = \left(\frac{1^*}{1-\theta}\right)' = \frac{N}{(N-1)\gamma + 1}$ and $\limsup_{s\to \infty} h(s)s^{\gamma} < \infty$ for some $\gamma\in (0,1)$,
    \item[(iii)] $m=N$ and $\|f\|_\lno < (\mathcal{S}_1 h(\infty))^{-1}$, where $\mathcal{S}_1$ is defined in~\eqref{eq:sob_embed}.
\end{enumerate}
\end{theorem}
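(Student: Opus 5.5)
We approximate \eqref{eq:PbMain} by $p$-Laplacian problems and let $p\to1^+$. For $p>1$, $n\in\N$, we consider $u_p$ solving
\[
-\Delta_p u_p + g_n(u_p)|\nabla u_p|^p = h_n(u_p)T_n(f),\qquad u_p\in W_0^{1,p}(\Omega),
\]
with $g_n=T_n(g)$ and $h_n=T_n(h)$, taking $n=n(p)\to\infty$ as $p\to1^+$. Classical theory gives nonnegative bounded solutions. The gradient term gets dropped in the energy estimates, because it has a good sign; this is exactly why no condition on $g$ at infinity is needed. We split $h$ into a singular part near zero and a part controlled at infinity.

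\textbf{Energy estimate.} Test with $T_1(u_p)$-type functions near zero and with $G_1(u_p)$ away from zero. Near zero, the singularity of $h$ must be absorbed. The idea, from the rigid structure, is to test with $e^{\Gamma(u_p)}\!-\!1$-type functions, or rather with $\varphi=\int_0^{u_p} e^{-\Gamma(s)}\,ds$ combined with the gradient term, so that $g|\nabla u|^p$ compensates. Alternatively, and this is what I would try first, fix $\delta$ small and split $\{u_p\le\delta\}$ and $\{u_p>\delta\}$. On $\{u_p>\delta\}$, $h$ is bounded by $c_\delta$ for $s\le s_0$ and by the tail condition for $s\ge s_0$. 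On $\{u_p\le\delta\}$, we use test function $T_\delta(u_p)/\delta$, whose bound only involves $\int_\Omega f$, via a Stampacchia-type trick that bounds $\int_{\{u_p\le \delta\}} h(u_p)f T_\delta(u_p)/\delta$ by showing $\int|\nabla T_\delta u_p|^p\le$ const. The hard part is bounding the contribution of $\{u_p>\delta\}$ in $\int|\nabla G_\delta(u_p)|^p$, which is done case by case.

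\begin{itemize}
\item[(i)] Test with $\log(1+G_\delta(u_p))$-like functions, or directly with $T_1 G_k$: since $h(s)s\le C$, testing with $G_\delta(u_p)$ gives $\int|\nabla G_\delta u_p|^p\le \int h(u_p)u_p f\le C\|f\|_1+c_\delta\|f\|_1$.
\item[(ii)] Testing with $G_\delta(u_p)$ gives $\int |\nabla G_\delta u_p|^p\le C\int f (1+u_p^{1-\gamma})$. Then Hölder with $m$ and Sobolev with exponent $1^*$ (note $(1-\gamma)m'=1^*$) gives $\|G_\delta u_p\|_{1^*}^{1-\gamma}$, which is absorbed by Young's inequality since $1-\gamma<1\le p$.
\item[(iii)] Testing with $G_k(u_p)$ for $k$ large, with $h\le h(\infty)+\epsilon$ on $\{u_p>k\}$, gives $\int|\nabla G_k u_p|^p\le (h(\infty)+\epsilon)\|f\|_N\,\mathcal S_p\|\nabla G_k u_p\|_p+\dots$. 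In the limit $p\to1$ (Young with $p$), the smallness condition $\|f\|_\lno<(\mathcal S_1h(\infty))^{-1}$ yields a uniform bound on $\|\nabla G_k u_p\|_1$.
\end{itemize}

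This yields $u_p$ bounded in $W^{1,1}_0$ uniformly, hence $u_p\to u$ in $L^1$ with $u\in BV(\Omega)$. Moreover, testing with $\Gamma$-type functions bounds $\int g_n(u_p)|\nabla u_p|^p$, so $\Gamma(u)\in BV(\Omega)$.

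\textbf{Limit.} Following the standard procedure, $|\nabla u_p|^{p-2}\nabla u_p\rightharpoonup z$ weakly in $L^q$ for every $q$, with $\|z\|_\infty\le1$. The lower bound $-\operatorname{div} z+g(u)|Du|\le h(u)f$ (and equality) is obtained via Fatou on $h_n(u_p)f$. Note that $u>0$ when $h(0)=\infty$, through a local positivity estimate. We also use lower semicontinuity of $\int\varphi|D\Gamma(u)|$. The identity \eqref{eq:def_pair} comes from testing with $T_k(u_p)\varphi$ and lower semicontinuity, together with Lemma~\ref{lem:Composition}. The absence of jumps comes from Lemma~\ref{lem:no_jump} and Remark~\ref{rem:no_jump}, applied with $v=\Gamma(u)$, since $g\ge$ const$>0$ locally on bounded ranges. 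The boundary condition \eqref{eq:def_bord} comes from testing with $\Gamma$ and using the Green formula, which gives $\int_{\partial\Omega}(|\Gamma(u)|+[\Gamma(u)z,\nu])\le0$.

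\textbf{Main obstacle.} I expect the main obstacle to be the equality in \eqref{eq:def_dist} near $\{u \text{ small}\}$ for arbitrary singular $h$. I would handle it by first proving equality on $\{u>\delta\}$, via test functions $S_\delta(u_p)\varphi$, and then letting $\delta\to0$ using $h(u)f\in L^1$.
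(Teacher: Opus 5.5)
Your energy estimate near $\{u=0\}$ does not work, and this is the heart of the theorem. Testing with $T_\delta(u_p)/\delta$ leaves $\frac1\delta\int_{\{u_p\le\delta\}}h(u_p)u_pf$ on the right. This is controlled by $\|f\|_{L^1(\Omega)}$ only if $h(s)s$ is bounded near $0$, and no such restriction is assumed (take $h(s)=s^{-2}$ or $e^{1/s}$). Bounding it ``by showing $\int|\nabla T_\delta u_p|^p\le C$'' is circular, since that gradient integral is what this very test function bounds \emph{by} the same right-hand side. Your alternatives $e^{\Gamma(u_p)}-1$ and $\int_0^{u_p}e^{-\Gamma}$ vanish linearly at $u_p=0$. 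They lead to $\int h(u_p)\Gamma(u_p)f$ or $\int h(u_p)u_pf$, which are equally uncontrolled.

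The missing idea is Lemma~\ref{lem:bound_hf}: test with $e^{-\Gamma_n(u_n)}$, which equals $1$ on $\partial\Omega$. The pairing $-|De^{-\Gamma_n(u_n)}|=-g_n(u_n)e^{-\Gamma_n(u_n)}|Du_n|$ cancels the gradient term exactly, and only the flux survives: $\into h_n(u_n)e^{-\Gamma_n(u_n)}f=-\intdo[z_n,\nu]\,\dH\le\mathcal{H}^{N-1}(\partial\Omega)$, because $\|z_n\|_{L^\infty(\Omega)^N}\le1$. This is why the paper approximates by $1$-Laplacian problems. In your scheme the same computation gives the flux $\intdo|\nabla u_p|^{p-1}\,\dH$, which has no uniform bound.

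Indeed, take fixed $p>1$, $f\equiv1$ and $h(s)=s^{-\gamma}$ with $\gamma>1$, which is an admissible case of (i). The substitution $v=\int_0^u e^{-\Gamma(t)/(p-1)}\,\mathrm{d}t$ turns the untruncated $p$-problem into $-\Delta_p v=e^{-\Gamma(u)}h(u)$. Its solutions behave like $d(x)^{p/(p-1+\gamma)}$ near the boundary, with $d(x)=\operatorname{dist}(x,\partial\Omega)$, so $\into h(u_p)f=+\infty$. Hence $\into h_n(u_p)T_n(f)$ must blow up as $n\to\infty$ for fixed $p$, and your coupling $n=n(p)$ would require a quantitative flux estimate you do not have. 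Without a uniform bound on $\into h_n(u_p)T_n(f)$ you obtain none of the following: the estimate on $T_\delta(u_p)$, $\Gamma(u)\in\bvo$, $h(u)f\in\luo$ (required in Definition~\ref{def:sol}), or $u>0$ when $h(0)=\infty$.

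The limit passage also contains an unresolved circularity. Fatou's lemma and lower semicontinuity give only one-sided information: with $S_\delta(u_p)\varphi$ you reach $-\operatorname{div}z+|D\Gamma(u)|\le h(u)f$, never equality, not even on $\{u>\delta\}$. Your derivation of \eqref{eq:def_pair} from $T_k(u_p)\varphi$ needs equality in the limit equation and the chain rule, hence $D^ju=0$. Meanwhile Lemma~\ref{lem:no_jump} needs as a hypothesis $(z,D\beta(v)^\#)=|D\beta(v)|$, which you never derive; positivity of $g$ does not provide it.

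The paper breaks the circle with an opposite inequality. Testing with $e^{-\Gamma_n(u_n)}\varphi$, the gradient terms cancel exactly, so Fatou alone gives \eqref{eq:prov3}, namely $-\operatorname{div}(e^{-\Gamma(u)}z)\ge h(u)e^{-\Gamma(u)}f$. Combining it with \eqref{eq:prov10} in the chain \eqref{eq:prov11}, closed by $|(z,D\beta(v)^\#)|\le|D\beta(v)|$, forces every inequality to be an equality. This yields at once the hypothesis of Lemma~\ref{lem:no_jump}, then \eqref{eq:def_pair} through Lemma~\ref{lem:Composition}, and equality in \eqref{eq:def_dist}. The same cancellation holds for $-\Delta_p$, so this could be transplanted to your scheme, but it must be there.

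Finally, your boundary inequality $\intdo\big(\Gamma(u)+[\Gamma(u)z,\nu]\big)\,\dH\le0$ has a nonnegative integrand and gives nothing. What forces $u=0$ on $\partial\Omega$ is the extra trace term coming from lower semicontinuity of the gradient term, namely $\intdo\tilde\Gamma_\delta(u)\,\dH$ in \eqref{prov17}.
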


As far as we know, this is the first work to observe the regularizing effect induced by the nonlinearity $h$ in problems involving gradient terms such as~\eqref{eq:PbMain}. When $g\equiv 0$, a similar result is shown in~\cite{MaOlPe1}. We emphasize once again that no assumption on $g$ at infinity is required.

It should be noted that, when $h$ vanishes at some point, a bounded solution to~\eqref{eq:PbMain} always exists without requiring further assumptions beyond~\eqref{eq:hyp_g_int}. This is shown in Section~\ref{sec:Bound_h=0} by combining Theorem~\ref{th:Exist2} with a truncation argument.

The strategy to prove both existence theorems is to approximate~\eqref{eq:PbMain} by removing the singularities and then passing to the limit. Once the a priori estimates have been obtained, the passage to the limit is identical in both frameworks. Full details are provided in Section~\ref{sec:Pf_exist}.

\subsection{Comparison principle and uniqueness of solution}

We establish a comparison principle for problem~\eqref{eq:PbMain} if, in addition to~\eqref{eq:hyp_g_int}, it also holds that
\begin{equation}
    \label{eq:hyp_comp}
    h(s) \text{ is nonincreasing for } s>0.
\end{equation}
To the author's knowledge, the only comparison principle for~\eqref{eq:PbMain} available in the literature is the one in~\cite{LaSe2}, which is proved for $g\equiv h \equiv 1$. This case is covered by our assumptions and, consequently, our result generalizes that of~\cite{LaSe2}. The comparison principle is stated as follows.

\begin{theorem}
\label{th:Comp}
Let $0<f_1,f_2\in L^1(\Omega)$ be such that $f_1\leq f_2$ $\ae$ in $\Omega$. Assume that $g$ verifies~\eqref{eq:hyp_g_int} and that $h$ satisfies~\eqref{eq:hyp_comp}. If $u_i\in\bvo$ ($i=1,2$) are solutions to problems
\begin{equation}
	\label{eq:PbComp}
	\begin{cases}
		-\Delta_1 u_i + g(u_i)|Du_i| = h(u_i)f_i & \text{in }\Omega,\\
		u_i=0 & \text{on } \partial\Omega,
	\end{cases}
\end{equation}
then $u_1\leq u_2$ $\ae$ in $\Omega$.
\end{theorem}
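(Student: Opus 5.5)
The natural route is the Laguna--Segura comparison argument for $-\Delta_1 u+|Du|=f$, after removing the gradient term by an exponential change of variable. Set $w_i := e^{\Gamma(u_i)}$. Formally, $-\operatorname{div}(z_i e^{\Gamma(u_i)}) = e^{\Gamma(u_i)}\big(-\operatorname{div} z_i + g(u_i)|Du_i|\big) = e^{\Gamma(u_i)} h(u_i) f_i$. Here we use $(z_i,D\Gamma(u_i)) = |D\Gamma(u_i)| = g(u_i)|Du_i|$, which follows from Lemma~\ref{lem:Composition2} and Lemma~\ref{lem:ChainRule}, together with $D^ju_i=0$. Since $e^{\Gamma}$ may be unbounded, everything will be done on truncations.

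\textbf{Step 1 (test functions).} Take a Lipschitz, nondecreasing test function with compact support in $(0,\infty)$, say $S_\epsilon(s)=T_\epsilon(s^+)/\epsilon$. I would test the equation for $u_1$ with
\[
\phi_1 = e^{-\Gamma(T_k u_1)}\,S_\epsilon\big(e^{\Gamma(T_k u_1)}-e^{\Gamma(T_k u_2)}\big),
\]
and test the equation for $u_2$ with the analogous $\phi_2$ carrying the weight $e^{-\Gamma(T_k u_2)}$. The weights $e^{-\Gamma}$ are chosen so that the $g|Du|$ terms cancel with the gradient of the weight, as in the change of variable above. To justify the tests I use Green's formula \eqref{eq:Green} and the Leibniz rule \eqref{eq:Prod}. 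Two features make this legitimate: the $u_i$ have no jump part, so precise representatives are unambiguous, and $u_i=0$ on $\partial\Omega$ by \eqref{eq:def_bord}, so the boundary terms vanish for the difference.

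\textbf{Step 2 (subtract).} Subtract the two identities. The principal part becomes
\[
\int (z_1-z_2,\, D S_\epsilon(w_1-w_2))\ \ge 0,
\]
since $(z_i,Dw_i)=|Dw_i|$ and $|z_j|\le1$ by Lemma~\ref{lem:Composition}. The right-hand side is
\[
\int \big(h(u_1)f_1 - h(u_2)f_2\big)\, S_\epsilon(\cdot)\, e^{-\Gamma}\text{-weights}.
\]
On the set $\{u_1>u_2\}$ we have $h(u_1)\le h(u_2)$ by \eqref{eq:hyp_comp}, and $f_1\le f_2$, so this term is $\le0$ up to error terms coming from the different weights $e^{-\Gamma(u_1)}$ and $e^{-\Gamma(u_2)}$. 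This is the delicate point. I would try first to use the weight $e^{-\Gamma(u_2)}$ in both tests, exploiting that it is smaller on $\{u_1>u_2\}$, which should give the right sign. Once the sign is secured, let $\epsilon\to0$; this yields an inequality showing that $\int_{\{u_1>u_2\}}$ of a positive quantity vanishes. In the Laguna--Segura style one obtains $|D(w_1-w_2)^+|$-type control, hence $(u_1-u_2)^+$ is constant. With zero boundary trace that constant is $0$, so $u_1\le u_2$.

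\textbf{Main obstacle.} The hardest step is handling possibly infinite $h(0)$ and the lack of integrability of $h(u_i)f_i$ times the test functions near $\{u_i=0\}$. Since the $S_\epsilon$ tests live only where $u_1>u_2\ge0$, and $h(u_i)f_i\in L^1$, I expect this to be manageable. A second difficulty is passing $k\to\infty$ when $u_i\notin L^\infty$; monotone convergence as in Lemma~\ref{lem:Composition2} should handle it.
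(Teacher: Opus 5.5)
\textbf{The gap is in Step 2, and the delicate point is misplaced.}

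Your Step 1 test functions carry the correct weights. The heuristic identity, however, is off by a sign: $-\operatorname{div}(e^{\Gamma(u)}z)=e^{\Gamma(u)}(-\operatorname{div}z-g(u)|Du|)$. The right transformed equation is $-\operatorname{div}(e^{-\Gamma(u)}z)=h(u)e^{-\Gamma(u)}f$, as in Proposition~\ref{prop:dist_comp}.

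After the gradient terms cancel, the weights stay attached to the vector fields. Write $a_i=e^{-\Gamma(T_ku_i)}$, $w_i=1/a_i$ and $v=w_1-w_2$. Since $v$ has no jump part, $(z_i,DS_\epsilon(v))=\epsilon^{-1}\chi_{\{0<v<\epsilon\}}(z_i,Dv)$. The principal part is therefore
\[
\frac1\epsilon\int_{\{0<v<\epsilon\}}\big[a_1(z_1,Dv)-a_2(z_2,Dv)\big],
\]
not $\int_\Omega(z_1-z_2,DS_\epsilon(v))$. Because $a_1\neq a_2$, this term has no evident sign. This, and not the right-hand side, is the real difficulty.

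The right-hand side $\int_\Omega S_\epsilon(v)\,(h(u_1)f_1a_1-h(u_2)f_2a_2)$ is $\le 0$ outright, with no error terms. On $\{v>0\}$ you have $u_1>u_2$ and $a_1<a_2$, and $s\mapsto h(s)e^{-\Gamma(s)}$ is nonincreasing.

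Your proposed fix does not work. It rests on a false claim: $e^{-\Gamma(u_2)}$ is \emph{larger} than $e^{-\Gamma(u_1)}$ on $\{u_1>u_2\}$, not smaller. Moreover, a common weight destroys the cancellation in the first equation. It leaves a term like $\int S_\epsilon(v)\,e^{-\Gamma(u_2)}\big(g(u_1)|Du_1|-g(u_2)|Du_2|\big)$, which has no sign.

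The paper controls the weighted principal term, with test function $(\Gamma_k(u_1)-\Gamma_k(u_2))^+$, through Lemma~\ref{lem:tec_inequality}. That lemma comes from testing both equations with $(e^{-\Gamma(u_2)}-e^{-\Gamma(u_1)})^+$. Inside your $S_\epsilon$ scheme a shorter repair is available:
\begin{enumerate}
\item[(a)] Using $(z_i,Dw_i)=|Dw_i|$ and $|(z_i,Dw_j)|\le|Dw_j|$, you get $a_1(z_1,Dv)-a_2(z_2,Dv)\ge(a_2-a_1)(|Dw_2|-|Dw_1|)$.
\item[(b)] On $\{0<v<\epsilon\}$ one has $0<a_2-a_1=v/(w_1w_2)<\epsilon$, since $w_i\ge1$.
\item[(c)] Hence the principal part is bounded below by $-\int_{\{0<v<\epsilon\}}|Dw_1|$, which tends to $0$ as $\epsilon\to0$.
\end{enumerate}

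\textbf{Your concluding step is also invalid.} No ``$|D(w_1-w_2)^+|$-type control'' is available. The $1$-Laplacian is not strictly monotone, so the vanishing of quantities like $(z_1-z_2,D(w_1-w_2))$ says nothing about $D(w_1-w_2)$; take $z_1=z_2$, for instance.

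The argument actually closes through the right-hand side. Letting $\epsilon\to0$ gives $\int_{\{T_ku_1>T_ku_2\}}(h(u_1)f_1a_1-h(u_2)f_2a_2)=0$. Wherever $h(u_2)>0$, the integrand satisfies $h(u_1)f_1a_1\le h(u_2)f_2a_1<h(u_2)f_2a_2$, because $f_2>0$ and $a_1<a_2$. Hence $|\{T_ku_1>T_ku_2\}|=0$ for every $k$.

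If $h$ vanishes somewhere, you also need Proposition~\ref{prop:Bound_h}. With $s_0$ the smallest zero of $h$, it gives $u_i\le s_0$, so $h(u_2)>0$ on $\{u_1>u_2\}$.

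Your proposal never uses $f_2>0$, nor the strict decrease of $he^{-\Gamma}$ where $h>0$. Both are indispensable: this is exactly how the paper concludes from~\eqref{eq:Pf_Comp_10}, and with only $f_2\ge0$ a different argument is required.
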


As an immediate consequence, we obtain the next uniqueness result.

\begin{corollary}
Let $0<f\in L^1(\Omega)$. Assume that $g$ verifies~\eqref{eq:hyp_g_int} and that $h$ satisfies~\eqref{eq:hyp_comp}. Then problem~\eqref{eq:PbMain} has at most one solution $u\in \bvo$.
\end{corollary}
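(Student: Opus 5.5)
The corollary follows directly from Theorem~\ref{th:Comp}. Let $u_1,u_2\in\bvo$ be two solutions to~\eqref{eq:PbMain} with the same datum $0<f\in\luo$, each with its own vector field $z_1,z_2$ as in Definition~\ref{def:sol}. Apply Theorem~\ref{th:Comp} with $f_1=f_2=f$. The hypothesis $f_1\leq f_2$ holds trivially, and $g$, $h$ satisfy~\eqref{eq:hyp_g_int} and~\eqref{eq:hyp_comp} by assumption. This gives $u_1\leq u_2$ a.e. in $\Omega$. Exchanging the roles of $u_1$ and $u_2$, which is allowed because the hypotheses are symmetric when the data coincide, gives $u_2\leq u_1$ a.e. in $\Omega$. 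Hence $u_1=u_2$ a.e. in $\Omega$, so they agree as elements of $\bvo$.

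Two minor points deserve a remark. First, uniqueness concerns $u$ only. The vector field $z$ need not be unique, and the statement does not claim it is, so nothing further is required. Second, Theorem~\ref{th:Comp} does not assume~\eqref{eq:hyp_h_inf} or any existence hypothesis, so the corollary uses no more than the stated assumptions. Since everything rests on Theorem~\ref{th:Comp}, there is no real obstacle here. The substantive work, namely testing the difference of the two equations with suitable truncations of $\Gamma(u_1)-\Gamma(u_2)$ and using~\eqref{eq:hyp_comp} to control the sign of $h(u_1)f_1-h(u_2)f_2$ on $\{u_1>u_2\}$, is contained in the proof of that theorem.
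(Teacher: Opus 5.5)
Your proof is correct and matches the paper, which presents the corollary as an immediate consequence of Theorem~\ref{th:Comp} applied with $f_1=f_2=f$ in both orders. On your side remark, \eqref{eq:hyp_h_inf} is a standing assumption in the paper, but it is implied by \eqref{eq:hyp_comp} anyway, since $h(s)\leq h(1)$ for $s\geq 1$, so nothing is lost either way.
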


This corollary can be combined with Theorems~\ref{th:Exist} and~\ref{th:Exist2} to ensure the existence and the uniqueness of solution provided that~\eqref{eq:hyp_comp} holds.

\begin{corollary}
If, in addition to the assumptions of Theorem~\ref{th:Exist} or Theorem~\ref{th:Exist2}, condition~\eqref{eq:hyp_comp} also holds, then problem~\eqref{eq:PbMain} has a unique solution $u\in \bvo$.
\end{corollary}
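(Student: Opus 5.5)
The statement immediately above is the existence-and-uniqueness corollary. It follows by combining existence with the uniqueness corollary, which in turn follows from Theorem~\ref{th:Comp}.

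\emph{Existence.} Under the hypotheses of Theorem~\ref{th:Exist} or Theorem~\ref{th:Exist2}, that theorem supplies a nonnegative solution $u\in\bvo$ to~\eqref{eq:PbMain} in the sense of Definition~\ref{def:sol}. The extra assumption~\eqref{eq:hyp_comp} does not interfere with either existence result. Indeed, a nonincreasing $h$ on $(0,\infty)$ automatically satisfies~\eqref{eq:hyp_h_inf}, since $\limsup_{s\to\infty}h(s)\le h(1)<\infty$. So all hypotheses of the chosen existence theorem remain in force, and existence is settled.

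\emph{Uniqueness.} Let $u,v\in\bvo$ both be solutions to~\eqref{eq:PbMain} with the same datum $f$. We apply Theorem~\ref{th:Comp} with $f_1=f_2=f$, so that $f_1\le f_2$ holds trivially. The requirement~\eqref{eq:hyp_g_int} holds by standing assumption, and~\eqref{eq:hyp_comp} holds by hypothesis.

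Taking $u_1=u$ and $u_2=v$ gives $u\le v$ a.e.\ in $\Omega$. Swapping the roles, $u_1=v$ and $u_2=u$, gives $v\le u$ a.e. Hence $u=v$ a.e., which is the uniqueness corollary stated earlier.

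Putting the two parts together gives exactly one solution in $\bvo$. There is no real obstacle here. The only point to check is that the solution provided by the existence theorems is a solution in the sense of Definition~\ref{def:sol}, which is the class to which Theorem~\ref{th:Comp} applies. That class includes $D^ju=0$, $\Gamma(u)\in\bvo$, $h(u)f\in\luo$ and the boundary condition~\eqref{eq:def_bord}, and the existence theorems are stated for precisely this notion. All the substantive difficulty is contained in Theorem~\ref{th:Comp} itself.
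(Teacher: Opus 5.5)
Your proposal is correct and is the same argument the paper uses: existence comes from Theorem~\ref{th:Exist} or Theorem~\ref{th:Exist2}, and uniqueness comes from applying Theorem~\ref{th:Comp} with $f_1=f_2=f$ in both orders. Your remark that a nonincreasing $h$ automatically satisfies~\eqref{eq:hyp_h_inf} is true but unnecessary, since that condition is already among the assumed hypotheses.
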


The proof of Theorem~\ref{th:Comp} follows the approach developed in~\cite{LaSe2}. The full details are provided in Section~\ref{sec:Pf_Comp}.

\section{Proof of the existence theorems}
\label{sec:Pf_exist}

In the following, we include the proof of Theorems~\ref{th:Exist} and~\ref{th:Exist2}. First, we introduce the approximate problems and obtain uniform estimates depending on the assumptions imposed on $g$ and $h$. Then, we pass to the limit to obtain a solution to~\eqref{eq:PbMain}. We emphasise that, in this last step, we introduce a novel approach that allows us to deal with the general singularities of $g$ and $h$ in a straightforward way.

\subsection{Approximation scheme and uniform estimates}
\label{sec:Approx}

We define, for every $s\geq 0$ and $n\in\N$, the functions
\[
g_n(s)= T_n \left(g(s)+\tfrac{1}{n}\right) \qquad \text{and} \qquad h_n(s) = T_n(h(s)),
\]
and we set $\Gamma_n(s):= \int_0^s g_n(t)\, \mathrm{d}t$.

We consider the approximated problems
\begin{equation}
	\label{eq:PbApprox}
	\begin{cases}
		-\Delta_1 u_n + g_n(u_n)|Du_n| = h_n(u_n)f & \text{in }\Omega,\\
		u_n=0 & \text{on } \partial\Omega.
	\end{cases}
\end{equation}
Since $g_n$ is nonsingular and $\liminf_{s\to\infty} g_n(s) \geq \frac{1}{n}$, then~\cite[Theorem~4.4]{BalOP} applies to guarantee the existence of a nonnegative solution $u_n\in \bvo$ to~\eqref{eq:PbApprox} (in the sense of Definition~\ref{def:sol}). In what follows, we denote by $z_n\in \DM(\Omega)$ the vector field associated with $u_n$.

Taking $\phi \in \bvo\cap\lio$ as test function in the weak formulation of~\eqref{eq:PbApprox}, one obtains
\[
- \into \phi \operatorname{div}z_n + \into g_n(u_n) \phi |Du_n| = \into h_n(u_n) \phi f.
\]
Since $z_n \in \DM(\Omega)$, the Green identity~\eqref{eq:Green} gives that
\begin{equation}
\label{eq:app_weak_form}
\into (z_n, D\phi) - \intdo \phi [z_n,\nu] \ \dH + \into g_n(u_n) \phi |Du_n| = \into h_n(u_n) \phi f, \ \forall \phi \in \bvo\cap\lio.
\end{equation}

In the following, we prove several uniform estimates for~\eqref{eq:PbApprox} under suitable assumptions on $g$ and $h$. First, we address the right-hand side term of~\eqref{eq:PbApprox}.

\begin{lemma}
\label{lem:bound_hf}
Assume that $g$ verifies~\eqref{eq:hyp_g_int} and that $h$ satisfies~\eqref{eq:hyp_h_inf}. Then there exists $C>0$ such that
\[
\into h_n(u_n)f < C,\ \forall n\in\N.
\]
\end{lemma}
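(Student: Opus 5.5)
Split $\Omega$ according to whether $u_n$ is small or large. Fix $\delta>0$ such that $h(s)\leq h(\infty)+1=:M$ for $s\geq\delta$; this is possible by~\eqref{eq:hyp_h_inf}. On the set $\{u_n\geq\delta\}$ we have $h_n(u_n)\leq M$, so
\[
\int_{\{u_n\ge\delta\}} h_n(u_n)f\le M\|f\|_{\luo}.
\]
It remains to control the integral over $\{u_n<\delta\}$, where $h$ may be arbitrarily singular. The idea is to use the structure of the 1-Laplacian: testing with a suitable bounded BV function of $u_n$ costs at most its total variation, and the gradient term absorbs that cost.

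Take $\phi=1-T_\delta(u_n)/\delta$, or a smooth variant, as test function in~\eqref{eq:app_weak_form}. This function lies in $\bvo\cap\lio$, satisfies $0\le\phi\le1$, equals $1$ on $\partial\Omega$ (because $u_n=0$ there), and equals $1$ on $\{u_n=0\}$. We get
\[
\into h_n(u_n)\phi f = -\tfrac1\delta\into (z_n,DT_\delta(u_n)) - \intdo [z_n,\nu]\,\dH + \into g_n(u_n)\phi|Du_n|.
\]
The terms on the right are handled as follows.
\begin{itemize}
\item The pairing term equals $-\frac1\delta\into|DT_\delta(u_n)|\le 0$ by~\eqref{eq:def_pair}, so it can be dropped.
\item The boundary term is bounded by $\mathcal H^{N-1}(\partial\Omega)$, since $\|z_n\|_\infty\le1$.
\item The last term is at most $\into g_n(u_n)|Du_n|=\into|D\Gamma_n(u_n)|$.
\end{itemize}
Since $\phi\ge\frac12$ on $\{u_n<\delta/2\}$, this gives $\int_{\{u_n<\delta/2\}}h_n(u_n)f$ bounded by $2\bigl(\mathcal H^{N-1}(\partial\Omega)+\into g_n(u_n)|Du_n|\bigr)$. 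On $\{u_n\ge\delta/2\}$ we use the bound $M$ as before, after choosing $\delta$ so that $h\le M$ on $[\delta/2,\infty)$.

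The main obstacle is a uniform bound on $\into g_n(u_n)|Du_n|$; equivalently, a bound on the full weight $\into g_n(u_n)|Du_n|$ near zero. I would test with $\phi\equiv1$, which is admissible. This yields
\[
\into g_n(u_n)|Du_n| = \into h_n(u_n)f + \intdo[z_n,\nu]\,\dH,
\]
which is circular. To avoid this, localize to $\{u_n<\delta\}$ and use $\phi=1-T_\delta(u_n)/\delta$ again, keeping its positive pairing term. That gives $\frac1\delta\into|DT_\delta(u_n)|+\into h_n\phi f = -\intdo[z_n,\nu] + \into g_n\phi|Du_n|$, where the gradient term on the right is supported in $\{u_n<\delta\}$. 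By Lemma~\ref{lem:ChainRule} it is bounded by $\into |D\Gamma_n(T_\delta(u_n))|$. Since $g$ is integrable near zero, $\Gamma_n(T_\delta(u_n))\le\Gamma(\delta)+\delta$, and a BV bound would follow from the equation. I expect that closing this circle is the real difficulty.

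A cleaner route is to test with $\phi=e^{-\Gamma_n(u_n)}$, or with $\phi=(1-\Gamma_n(T_\delta u_n)/c)$ for a suitable constant $c$. Using Lemma~\ref{lem:Composition} for the pairing, one gets $(z_n,D\phi)=-\phi\,g_n(u_n)|Du_n|$, which exactly cancels the gradient term. With $\phi=e^{-\Gamma_n(u_n)}$ this yields
\[
\into h_n(u_n)e^{-\Gamma_n(u_n)}f \le \mathcal H^{N-1}(\partial\Omega).
\]
On $\{u_n<\delta\}$ we have $e^{-\Gamma_n(u_n)}\ge e^{-\Gamma(\delta)-1}>0$, uniformly in $n$, thanks to~\eqref{eq:hyp_g_int}. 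This bounds the singular region, and combining it with the bound $M\|f\|_{\luo}$ on the large region gives $C$. The main technical step is justifying this test function, through truncations and Lemma~\ref{lem:Composition}, when $u_n$ is unbounded.
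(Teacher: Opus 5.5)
Your final route is correct and is essentially the paper's proof: test with $e^{-\Gamma_n(u_n)}$, cancel the gradient term using the pairing identity and the chain rule ($D^j u_n=0$), bound the boundary term by $\mathcal{H}^{N-1}(\partial\Omega)$, and then split $\Omega$ into a region where $e^{-\Gamma_n(u_n)}$ is bounded below uniformly (by \eqref{eq:hyp_g_int}) and a region where $h$ is bounded (by \eqref{eq:hyp_h_inf}). The truncation step you flag for unbounded $u_n$ is exactly what Lemma~\ref{lem:Composition2} provides, the earlier attempts with $1-T_\delta(u_n)/\delta$ can be discarded, and the lower bound should read $e^{-\Gamma(\delta)-\delta}$ (since $\Gamma_n(\delta)\le\Gamma(\delta)+\delta/n$) to hold for every $n$, which is a harmless fix.
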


\begin{proof}
We take $e^{-\Gamma_n(u_n)} \in \bvo \cap \lio$ as test function in~\eqref{eq:app_weak_form}, obtaining that
\begin{equation}
    \label{eq:Pf_Lem_AP_1}
    \into \left(z_n, De^{-\Gamma_n(u_n)}\right) - \intdo e^{-\Gamma_n(u_n)} [z_n,\nu] \ \dH  + \into g_n(u_n) e^{-\Gamma_n(u_n)} |Du_n| = \into h_n(u_n) e^{-\Gamma_n(u_n)} f.
\end{equation}
Since $s\mapsto e^{-\Gamma_n(s)}$ is a nonincreasing function and $e^{-\Gamma_n(u_n)} \in \bvo \cap \lio$, Lemma~\ref{lem:Composition2} can be applied to deduce that $\left(z_n, De^{-\Gamma_n(u_n)}\right) = -\left| De^{-\Gamma_n(u_n)} \right|$. As $u_n$ has no jump part, the chain rule implies that $\left| De^{-\Gamma_n(u_n)} \right| = g_n(u_n) e^{-\Gamma_n(u_n)} |Du_n|$. Using also that $u_n\lvert_{\partial\Omega} =0$,~\eqref{eq:Pf_Lem_AP_1} becomes
\begin{equation}
\label{eq:Pf_Lem_AP_2}
    \into h_n(u_n) e^{-\Gamma_n(u_n)} f = - \intdo [z_n,\nu] \ \dH.
\end{equation}
Observe that
\begin{equation}
\label{eq:Pf_Lem_AP_3}
    e^{-\Gamma_n(1)} \int_{\{u_n\leq 1\}} h_n(u_n) f \leq \int_{\{u_n\leq 1\}} h_n(u_n) e^{-\Gamma_n(u_n)} f \leq \into h_n(u_n) e^{-\Gamma_n(u_n)} f.
\end{equation}
Joining~\eqref{eq:Pf_Lem_AP_2} and~\eqref{eq:Pf_Lem_AP_3}, we deduce that
\begin{equation}
\label{eq:Pf_Lem_AP_4}
    \int_{\{u_n\leq 1\}} h_n(u_n) f \leq - e^{\Gamma_n(1)} \intdo [z_n,\nu] \ \dH \leq  e^{\Gamma_n(1)} \mathcal{H}^{N-1}(\partial \Omega) \leq e^{\Gamma(1)+1} \mathcal{H}^{N-1}(\partial \Omega).
\end{equation}
where we have used that $\| [z_n,\nu] \|_{L^\infty(\partial\Omega)} \leq 1$. We stress that $e^{\Gamma(1)}$ is finite because $g$ satisfies~\eqref{eq:hyp_g_int}.

Finally, observe that
\begin{equation}
\label{eq:Pf_Lem_AP_5}
    \int_{\{u_n> 1\}} h_n(u_n) f \leq \sup_{s>1} h(s) \|f\|_\luo,
\end{equation}
and $\sup_{s>1} h(s)$ is finite due to~\eqref{eq:hyp_h_inf}. Our proof concludes just adding~\eqref{eq:Pf_Lem_AP_4} and~\eqref{eq:Pf_Lem_AP_5}.
\end{proof}

Next, we make use of Lemma~\ref{lem:bound_hf} to handle the gradient term in the approximated problems~\eqref{eq:PbApprox}.

\begin{lemma}
\label{lem:bound_grad}
Assume that $g$ verifies~\eqref{eq:hyp_g_int} and that $h$ satisfies~\eqref{eq:hyp_h_inf}. Then there exists $C>0$ such that
\[
\|\Gamma_n(u_n)\|_\bvo < C,\ \forall n\in\N.
\]
\end{lemma}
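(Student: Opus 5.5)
Since $u_n$ has zero trace and $\Gamma_n(0)=0$, the norm reduces to $\|\Gamma_n(u_n)\|_\bvo = \into |D\Gamma_n(u_n)|$. The plan is to test \eqref{eq:app_weak_form} with $\phi=1$, which is admissible since $1\in\bvo\cap\lio$ and $(z_n,D1)=0$. This gives
\[
\into g_n(u_n)|Du_n| = \into h_n(u_n) f + \intdo [z_n,\nu]\ \dH \le \into h_n(u_n)f + \mathcal{H}^{N-1}(\partial\Omega),
\]
where we use $\|[z_n,\nu]\|_{L^\infty(\partial\Omega)}\le 1$. By Lemma~\ref{lem:bound_hf}, the right-hand side is bounded uniformly in $n$.

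Next we identify $g_n(u_n)|Du_n|$ with $|D\Gamma_n(u_n)|$. The function $g_n$ is bounded, so $\Gamma_n$ is Lipschitz with $\Gamma_n(0)=0$. Since $D^j u_n=0$, the chain rule (Lemma~\ref{lem:ChainRule_AFP}) gives $D\Gamma_n(u_n)=g_n(u_n)\widetilde D u_n = g_n(u_n)Du_n$, hence $|D\Gamma_n(u_n)| = g_n(u_n)|Du_n|$. Alternatively, one can invoke Lemma~\ref{lem:ChainRule} with $g_n$ in place of $g$. Moreover $\Gamma_n(u_n)$ vanishes $\mathcal{H}^{N-1}$-a.e. on $\partial\Omega$, because $u_n=0$ there by \eqref{eq:def_bord}. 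Combining these facts, $\|\Gamma_n(u_n)\|_\bvo = \into g_n(u_n)|Du_n| \le C + \mathcal{H}^{N-1}(\partial\Omega)$.

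The only delicate point is justifying the test with $\phi\equiv1$. If the weak formulation is only available for the measure identity \eqref{eq:def_dist} paired with $\phi$, we can instead integrate \eqref{eq:def_dist} over $\Omega$ and apply the Green formula \eqref{eq:Green} with $u=1$. This yields $-\into \operatorname{div} z_n = -\intdo [z_n,\nu]\,\dH$, so the same identity holds. All measures involved are finite, since $u_n\in\bvo$, $g_n$ is bounded, and $h_n(u_n)f\in\luo$. I do not expect any serious obstacle here: the substantial work was already done in Lemma~\ref{lem:bound_hf}.
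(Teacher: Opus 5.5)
Your proposal is correct and follows essentially the same route as the paper: test \eqref{eq:app_weak_form} with $\phi\equiv 1$, bound the boundary term by $\mathcal{H}^{N-1}(\partial\Omega)$, control $\into h_n(u_n)f$ via Lemma~\ref{lem:bound_hf}, and use $D^j u_n=0$ to identify $g_n(u_n)|Du_n|$ with $|D\Gamma_n(u_n)|$, with zero trace handling the boundary part of the norm. Your extra remarks (that $\Gamma_n$ is Lipschitz, so the classical chain rule suffices, and that the $\phi\equiv1$ test follows from the Green formula) are valid justifications of steps the paper takes for granted.
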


\begin{proof}
As $\Gamma_n(u_n)\lvert_{\partial\Omega} =0$ for every $n\in\N$, it suffices to prove that the total variation of $\Gamma_n(u_n)$ is uniformly bounded. With this aim, we take 1 as test function in~\eqref{eq:app_weak_form} to obtain that
\[
- \intdo [z_n,\nu] \ \dH + \into g_n(u_n) |Du_n| = \into h_n(u_n)f.
\]
Since $\| [z_n,\nu] \|_{L^\infty(\partial\Omega)} \leq 1$ and the sequence $h_n(u_n)f$ is bounded in $L^1(\Omega)$ by Lemma~\ref{lem:bound_hf}, we deduce that $\into g_n(u_n) |Du_n|$ is uniformly bounded. As $u_n$ has no jump part, then $g_n(u_n) |Du_n| = |D\Gamma_n(u_n)|$ and thus our claim follows.
\end{proof}

So far, the estimates that we have obtained are valid in the frameworks of both Theorems~\ref{th:Exist} and~\ref{th:Exist2}. The next result asserts that, if the hypotheses of Theorem~\ref{th:Exist} are satisfied, then the solutions $u_n$ are uniformly bounded in $BV(\Omega)$.

\begin{lemma}
\label{lem:bound_sol}
Assume that $g$ verifies~\eqref{eq:hyp_g_int} and that $h$ satisfies~\eqref{eq:hyp_h_inf}. If~\eqref{eq:hyp_g_inf} holds, then there exists $C>0$ such that
\[
\|u_n\|_\bvo <C,\ \forall n\in\N.
\]
\end{lemma}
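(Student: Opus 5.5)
We reduce the $BV$ bound for $u_n$ to the bound on $\Gamma_n(u_n)$ from Lemma~\ref{lem:bound_grad}. Since $u_n$ has zero trace on $\partial\Omega$, it suffices to bound $\into |Du_n|$ uniformly in $n$. By~\eqref{eq:hyp_g_inf}, there exist $s_0>0$ and $\delta>0$ such that $g(s)\geq\delta$ for all $s\geq s_0$. For $n\geq n_0$ large enough we also have $T_n(g(s)+\frac1n)\geq \min\{\delta,n\}=\delta$ on $[s_0,\infty)$, so $g_n\geq\delta$ there. The finitely many indices $n<n_0$ cause no trouble: each $u_n$ lies in $\bvo$, hence contributes a finite constant.

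The plan is to split $u_n=T_{s_0}(u_n)+G_{s_0}(u_n)$. Both pieces are Lipschitz compositions of $u_n$, so both lie in $\bvo$ and have no jump part.

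\emph{Large values.} By the chain rule (Lemma~\ref{lem:ChainRule_AFP}), $|DG_{s_0}(u_n)|=\chi_{\{u_n>s_0\}}|Du_n|$. Hence
\[
\into|DG_{s_0}(u_n)|\le \frac1\delta\int_{\{u_n>s_0\}} g_n(u_n)|Du_n|\le\frac1\delta\into|D\Gamma_n(u_n)|,
\]
and the right-hand side is bounded by Lemma~\ref{lem:bound_grad}.

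\emph{Small values.} This is the step needing care, because $g_n$ is positive but not bounded below uniformly in $n$ on $(0,s_0]$. The term $\frac1n$ gives a lower bound that deteriorates, and $g$ itself may approach $0$ only at $0$, where it is continuous with values in $(0,\infty]$. Since $g$ is continuous and positive on $[0,s_0]$ (with value $+\infty$ allowed at $0$), we have $m:=\min_{[0,s_0]}g>0$. If $n\ge m$, then $g_n\geq \min\{m,n\}=m$ on $[0,s_0]$, and the same chain-rule estimate gives
\[
\into|DT_{s_0}(u_n)|\le \frac1m\into g_n(u_n)|Du_n|.
\]
Taking $n_0$ to cover this condition as well finishes the estimate. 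If one prefers not to use positivity at every point, an alternative is to test~\eqref{eq:app_weak_form} with $T_{s_0}(u_n)$. Using~\eqref{eq:def_pair} and the zero boundary trace, this yields $\into|DT_{s_0}(u_n)|\le s_0\into h_n(u_n)f$, which is bounded by Lemma~\ref{lem:bound_hf}.

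I would actually present this second argument, since it is cleaner. The pairing term becomes $\into|DT_{s_0}(u_n)|$, the boundary integral vanishes, and the gradient term is nonnegative and can be dropped. Adding the two estimates then gives $\|u_n\|_\bvo<C$.
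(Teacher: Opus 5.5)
Your proof is correct and follows essentially the paper's route. The paper also tests \eqref{eq:app_weak_form} with $T_k(u_n)$ and uses $g_n\ge\eta$ on $(k,\infty)$ for large values, but it bounds $\{u_n>k\}$ with the absorption term from that same identity, $k\eta\int_{\{u_n>k\}}|Du_n|\le\into T_k(u_n)g_n(u_n)|Du_n|\le k\into h_n(u_n)f$, instead of invoking Lemma~\ref{lem:bound_grad}; choosing $\eta\in(0,1)$ there gives $g_n\ge\eta$ for every $n$, so no indices need to be set aside.
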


\begin{proof}
For $k>0$, we take $T_k(u_n) \in \bvo \cap \lio$ as test function in~\eqref{eq:app_weak_form} to deduce that
\[
\into (z_n, DT_k(u_n)) - \intdo T_k(u_n) [z_n,\nu] \ \dH + \into T_k(u_n) g_n(u_n) |Du_n| = \into T_k(u_n) h_n(u_n)f.
\]
Since $(z_n, DT_k(u_n)) = |DT_k(u_n)|$ and $u_n$ has zero boundary trace, we obtain
\begin{equation}
\label{eq:Pf_Lem_AP_6}
    \into |DT_k(u_n)| + \into T_k(u_n) g_n(u_n) |Du_n| = \into T_k(u_n) h_n(u_n)f.
\end{equation}
As $g$ satisfies~\eqref{eq:hyp_g_inf}, we can take $k>0$ such that $\inf_{s>k} g(s)>\eta$ for some $\eta\in (0,1)$. Then, one has
\begin{equation}
\label{eq:Pf_Lem_AP_7}
    k\eta \int_{\{u_n>k\}} |Du_n| \leq k \int_{\{u_n>k\}} g_n(u_n) |Du_n| \leq \into T_k(u_n) g_n(u_n) |Du_n|.
\end{equation}
Combining~\eqref{eq:Pf_Lem_AP_6} and~\eqref{eq:Pf_Lem_AP_7}, and using that $\into |DT_k(u_n)| = \int_{\{u_n\leq k\}} |Du_n|$ because $u_n$ has no jump part, it follows
\[
\min\{1,k\eta\} \into |Du_n| \leq 
\int_{\{u_n\leq k\}} |Du_n| + k\eta \int_{\{u_n>k\}} |Du_n| \leq \into T_k(u_n) h_n(u_n)f \leq k \into h_n(u_n)f.\]
By Lemma~\ref{lem:bound_hf}, the sequence $h_n(u_n)f$ is bounded in $\luo$, and then $\into |Du_n|$ is uniformly bounded. In addition, since $u_n\lvert_{\partial\Omega} = 0$ for all $n\in\N$, we conclude that the sequence $u_n$ is bounded in $\bvo$.
\end{proof}

Finally, it remains to prove that $u_n$ is also uniformly bounded in $\bvo$ under the hypotheses of Theorem~\ref{th:Exist2}. To this end, we follow some ideas of~\cite{MaOlPe1}.

\begin{lemma}
\label{lem:bound_sol2}
Assume that $g$ verifies~\eqref{eq:hyp_g_int}, that $h$ satisfies~\eqref{eq:hyp_h_inf}, and that $f$ belongs to $L^m(\Omega)$ $(m\geq 1)$. Then there exists $C>0$ such that
\[
\|u_n\|_\bvo <C,\ \forall n\in\N,
\]
if one of the following cases occurs:
\begin{enumerate}
    \item[(i)] $m=1$ and $\limsup_{s\to \infty} h(s)s < \infty$,
    \item[(ii)] $m = \left(\frac{1^*}{1-\gamma}\right)' = \frac{N}{(N-1)\gamma + 1}$ and $\limsup_{s\to \infty} h(s)s^{\gamma} < \infty$ for some $\gamma\in (0,1)$,
    \item[(iii)] $m=N$ and $\|f\|_\lno < (\mathcal{S}_1  h(\infty))^{-1}$, where $\mathcal{S}_1$ is defined in~\eqref{eq:sob_embed}.
\end{enumerate}
\end{lemma}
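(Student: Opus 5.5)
We test \eqref{eq:app_weak_form} with $T_k(u_n)$ (or with $u_n$ itself via truncation) and discard the nonnegative gradient term $\into T_k(u_n) g_n(u_n)|Du_n|$. Since $(z_n,DT_k(u_n))=|DT_k(u_n)|$ and the trace of $u_n$ vanishes, this gives
\[
\int_{\{u_n\le k\}} |Du_n| = \into |DT_k(u_n)| \le \into T_k(u_n) h_n(u_n) f .
\]
Letting $k\to\infty$ (monotone convergence, no jump part) yields $\into |Du_n| \le \into u_n h_n(u_n) f$, provided the right-hand side is finite. So everything reduces to a uniform bound on $\into u_n h_n(u_n) f$. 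We split $\Omega$ into $\{u_n\le s_0\}$ and $\{u_n>s_0\}$, with $s_0\ge1$ a threshold beyond which the growth hypothesis on $h$ holds. On $\{u_n\le s_0\}$ we use $\into u_n h_n(u_n)f\chi_{\{u_n\le s_0\}}\le s_0\into h_n(u_n)f\le s_0 C$ by Lemma~\ref{lem:bound_hf}; this step absorbs the arbitrary singularity of $h$ at the origin. To keep the right-hand side finite before the estimate closes, we work with $T_k$ throughout and pass $k\to\infty$ only at the end.

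\textbf{Case (i).} Choose $s_0$ with $h(s)s\le c$ for $s>s_0$. Then $\int_{\{u_n>s_0\}} u_n h_n(u_n) f \le c\|f\|_{L^1(\Omega)}$, and the bound on $\into|Du_n|$ follows immediately.

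\textbf{Case (ii).} Here $h(s)\le cs^{-\gamma}$ for $s>s_0$, so
\[
\int_{\{u_n>s_0\}} T_k(u_n)h_n(u_n)f\le c\into T_k(u_n)^{1-\gamma}f\le c\|f\|_{L^m}\|T_k(u_n)\|_{L^{1^*}}^{1-\gamma}
\]
by Hölder with exponents $m$ and $m'=\frac{1^*}{1-\gamma}$. By \eqref{eq:sob_embed} and zero trace, $\|T_k(u_n)\|_{L^{1^*}}\le\mathcal S_1\into|DT_k(u_n)|$. This gives
\[
X\le C+ C' X^{1-\gamma}, \qquad X:=\into|DT_k(u_n)|,
\]
and since $1-\gamma<1$, $X$ is bounded uniformly in $k,n$. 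Letting $k\to\infty$ concludes.

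\textbf{Case (iii).} Pick $\varepsilon>0$ with $(h(\infty)+\varepsilon)\mathcal S_1\|f\|_{L^N}<1$ and $s_0$ with $h\le h(\infty)+\varepsilon$ on $(s_0,\infty)$. Then
\[
\int_{\{u_n>s_0\}}T_k(u_n)h_n(u_n)f\le (h(\infty)+\varepsilon)\|f\|_{L^N}\mathcal S_1\into|DT_k(u_n)|,
\]
which is absorbed into the left-hand side.

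In all three cases, since $u_n=0$ on $\partial\Omega$, the bound on $\into|Du_n|$ gives the $BV$ bound. The main obstacle I anticipate is case (iii), specifically the absorption step: it needs the strict smallness condition and a careful choice of $\varepsilon$ and $s_0$, handling the case $h(\infty)=0$ trivially. Some care is also needed to justify the $T_k$ limit so that no a priori finiteness of $\into u_nh_n(u_n)f$ is assumed.
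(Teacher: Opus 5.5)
Your argument is correct, but it is organized differently from the paper's. The paper splits $u_n=T_k(u_n)+G_k(u_n)$ and uses two test functions. The first is $T_k(u_n)$, whose total variation is bounded by $Ck$ via Lemma~\ref{lem:bound_hf}. The second is $G_k(T_\ell(u_n))$, which gives $\into|DG_k(T_\ell(u_n))|\le\into h_n(u_n)G_k(T_\ell(u_n))f$. The growth of $h$, H\"older and Sobolev are applied only to this high-level part, and $\ell\to\infty$ is handled by lower semicontinuity. You instead use the single test function $T_k(u_n)$ and split the right-hand side by level sets. The part on $\{u_n\le s_0\}$ is absorbed by Lemma~\ref{lem:bound_hf}, and the part on $\{u_n>s_0\}$ is controlled by the growth of $h$. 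Two of your ingredients are exactly what is needed: the pointwise bound $T_k(u_n)u_n^{-\gamma}\le T_k(u_n)^{1-\gamma}$ in case (ii), and the finiteness of $\into|DT_k(u_n)|$ for fixed $k$, which justifies the absorption.

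Your route is more economical, since one inequality controls the whole total variation. However, it produces inhomogeneous inequalities, $X\le C+C'X^{1-\gamma}$ and $X\le s_0C+\theta X$ with $\theta:=(h(\infty)+\varepsilon)\mathcal{S}_1\|f\|_{L^N(\Omega)}<1$. The paper's decoupling makes the high-level inequality homogeneous. In case (ii) this difference is only cosmetic. In case (iii), however, it gives $\into|DG_{k_0}(T_\ell(u_n))|=0$, i.e.\ the uniform bound $\|u_n\|_{L^\infty(\Omega)}\le k_0$. The remark following the lemma uses this to conclude that the solution of Theorem~\ref{th:Exist2}(iii) is bounded, and your version does not deliver it.
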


\begin{proof}
First, we note that $u_n= T_k(u_n) + G_k(u_n)$ for any $k>0$. Therefore, to show that $u_n$ is bounded in $\bvo$, it suffices to show that the total variations of both $T_k(u_n)$ and $G_k(u_n)$ remain uniformly bounded for some fixed $k>0$ (recall that $u_n\lvert_{\partial\Omega} = 0$).

We begin by addressing $T_k(u_n)$, which can be estimated in the same way for all three cases. Just as in the proof of Lemma~\ref{lem:bound_sol}, we take $T_k(u_n)$ as test function in~\eqref{eq:PbApprox} to obtain, after dropping a nonnegative term in~\eqref{eq:Pf_Lem_AP_6}, that
\[
\into |DT_k(u_n)| \leq \into T_k(u_n) h_n(u_n)f.
\]
Since the sequence $h_n(u_n)f$ is bounded in $\luo$ by Lemma~\ref{lem:bound_hf}, there is some $C>0$ such that
\begin{equation}
\label{eq:Pf_Lem_AP_8}
\into |DT_k(u_n)| \leq Ck,\ \forall n\in\N.
\end{equation}

To handle $G_k(u_n)$, we fix $\ell >k>0$ and we take $G_k(T_\ell (u_n)) \in \bvo\cap \lio$ as test function in~\eqref{eq:app_weak_form}. Taking into account that $u_n$ has zero boundary trace, we obtain that 
\[
\into (z_n, DG_k(T_\ell (u_n))) + \into g_n(u_n) G_k(T_\ell (u_n)) |Du_n| = \into h_n(u_n) G_k(T_\ell (u_n)) f.
\]
Using Lemma~\ref{lem:Composition} and dropping the second integral, it follows that
\begin{equation}
\label{eq:Pf_Lem_AP_9}
\into |DG_k(T_\ell (u_n))| \leq \into h_n(u_n) G_k(T_\ell (u_n)) f.
\end{equation}
From this point, we consider the three cases separately:
\medskip

\textbf{Case (i).} Since $\limsup_{s\to \infty} h(s)s < \infty$, there exists some $C(k)>0$ such that $\sup_{s>k} h(s)s < C(k)$. Hence, as $G_k(s)=0$ when $s\leq k$ and $G_k(s)\leq s$ for any $s\geq 0$, in~\eqref{eq:Pf_Lem_AP_9} we obtain that
\[
\into |DG_k(T_\ell (u_n))| \leq \int_{\{u_n>k\}} h_n(u_n) u_n f \leq C(k) \|f\|_\luo.
\]
Using now the lower semicontinuity of the total variation with respect to the $L^1$-convergence (observe that $G_k(T_\ell (u_n))\to G_k(u_n)$ in $\luo$ as $\ell\to\infty$), we conclude that
\begin{equation*}
\into |DG_k(u_n)| \leq C(k) \|f\|_\luo,\ \forall n\in\N.
\end{equation*}

\textbf{Case (ii).} Since $\limsup_{s\to \infty} h(s)s^{\gamma} < \infty$, there is some $C(k)>0$ such that $\sup_{s>k} h(s)s^{\gamma} < C(k)$. In this way, using that $G_k(s)=0$ when $s\leq k$ and $G_k(s)\leq s$ for any $s\geq 0$, in~\eqref{eq:Pf_Lem_AP_9} we deduce that
\[
\into |DG_k(T_\ell (u_n))| \leq \int_{\{u_n>k\}} h_n(u_n) u_n^\gamma G_k(T_\ell (u_n))^{1-\gamma} f \leq C(k) \int_{\{u_n>k\}} G_k(T_\ell (u_n))^{1-\gamma} f.
\]
Applying H\"older's inequality and the Sobolev inequality~\eqref{eq:sob_embed} to the right-hand side of the previous expression, we obtain that
\[
\into |DG_k(T_\ell (u_n))| \leq C(k) \|f\|_{L^m(\Omega)} \left(\into G_k(T_\ell (u_n))^{m'(1-\gamma)} \right)^\frac{1}{m'} \leq C(k) \mathcal{S}_1 \|f\|_{L^m(\Omega)} \left(\into |DG_k(T_\ell (u_n))| \right)^\frac{1^*}{m'},
\]
where, in the last inequality, we have used that $m'(1-\gamma) = 1^*$ and that $u_n\lvert_{\partial\Omega} = 0$. Since $\frac{1^*}{m'} = 1-\gamma$, it follows that
\[
\into |DG_k(T_\ell (u_n))| \leq \left( C(k) \mathcal{S}_1 \|f\|_{L^m(\Omega)} \right)^\frac{1}{\gamma},
\]
and, using as before the lower semicontinuity of the total variation, we conclude that
\[
\into |DG_k (u_n)| \leq \left( C(k) \mathcal{S}_1 \|f\|_{L^m(\Omega)} \right)^\frac{1}{\gamma},\ \forall n\in\N.
\]

\textbf{Case (iii).} Since~\eqref{eq:hyp_h_inf} holds, then $\sup_{s>k} h(s)$ is finite for any $k>0$. As $G_k(s)=0$ for any $s\leq k$, from~\eqref{eq:Pf_Lem_AP_9} we obtain that
\begin{equation*}
\into |DG_k(T_\ell (u_n))| \leq \sup_{s>k}h(s)\into G_k(T_\ell (u_n)) f.
\end{equation*}
Using H\"older's inequality and the Sobolev inequality~\eqref{eq:sob_embed}, and taking into account that $u_n\lvert_{\partial\Omega} = 0$, we deduce that
\begin{equation*}
\into |DG_k(T_\ell (u_n))| \leq \sup_{s>k}h(s) \|f\|_\lno \|G_k(T_\ell (u_n))\|_{L^{\frac{N}{N-1}}(\Omega)} \leq \sup_{s>k}h(s) \|f\|_\lno \mathcal{S}_1 \into |DG_k(T_\ell (u_n))|.
\end{equation*}
Since we are assuming that $\|f\|_\lno < (\mathcal{S}_1 h(\infty))^{-1}$, we can take $k_0>0$ large such that
\[
\sup_{s>k_0}h(s) \|f\|_\lno \mathcal{S}_1 < 1.
\]
Therefore, we conclude that
\[
\into |DG_{k_0}(T_\ell (u_n))| = 0, \ \forall n\in \N,
\]
that is, $G_{k_0}(T_\ell (u_n)) = 0$ for any $n\in\N$. Since $\ell>k_0$, this implies that $\|u_n\|_{\lio} \leq k_0$ for any $n\in\N$. As we had already shown~\eqref{eq:Pf_Lem_AP_8}, this ends the proof.
\end{proof}

\begin{remark}
Note that in the latter case we have indeed proved that $u_n$ is bounded in $\lio$. Hence, under these assumptions, the solution we find in Theorem~\ref{th:Exist2} is bounded. The boundedness of the solutions is further discussed in Section~\ref{sec:Bound}.
\end{remark}

\subsection{Passage to the limit}

Once these estimates have been established, we are now in a position to prove both Theorems~\ref{th:Exist} and~\ref{th:Exist2}. As previously noted, the passage to the limit is identical in both frameworks. The unified proof is as follows.

\begin{proof}[Proof of Theorems~\ref{th:Exist} and~\ref{th:Exist2}]
By virtue of Lemma~\ref{lem:bound_sol} (in the framework of Theorem~\ref{th:Exist}) or Lemma~\ref{lem:bound_sol2} (in the framework of Theorem~\ref{th:Exist2}), there exists a subsequence of $u_n$ (not relabelled) and some nonnegative $u\in \bvo$ such that
\begin{equation*}
    u_n\to u \text{ in } \luo \qquad \text{ and } \qquad u_n\to u \ \ae \text{ in } \Omega.
\end{equation*}

Moreover, from Lemma~\ref{lem:bound_grad} we deduce that $\Gamma_n(u_n)$ has a subsequence (not relabelled) such that $\Gamma_n(u_n)$ converges in $\luo$ and $\ae$ in $\Omega$ to some function belonging to $\bvo$. As $u_n\to u$ $\ae$ in $\Omega$, the limit of $\Gamma_n(u_n)$ must be $\Gamma(u)$. Therefore, $\Gamma(u)\in \bvo$.

In addition, since $\into h_n(u_n)f$ is bounded by Lemma~\ref{lem:bound_hf}, the Fatou Lemma implies that $h(u)f\in \luo$. Consequently, if $h(0)=\infty$, it follows that $u>0$ $\ae$ in $\Omega$ because $f>0$.

In the following, for the sake of clarity, we divide the proof into several steps.
\medskip 

\textbf{Step 1.} Existence of the vector field $z$.
\smallskip

As $\|z_n\|_{L^\infty(\Omega)^N}\le 1$, then there exists $z\in L^\infty(\Omega)^N$ such that, up to a subsequence, $z_n\rightharpoonup z$ *-weakly in $L^\infty(\Omega)^N$. Since the norm is *-weakly lower semicontinuous, then we also have $\|z\|_{L^\infty(\Omega)^N}\leq 1$.
\medskip

\textbf{Step 2.} An auxiliary distributional inequality.
\smallskip

We want to prove that
\begin{equation}
\label{eq:prov3}
-\operatorname{div} \left( e^{-\Gamma(u)}z \right) \geq h(u) e^{-\Gamma(u)} f \text{ in } \mathcal{D}'(\Omega).
\end{equation}
Note that from~\eqref{eq:prov3}, it follows that $e^{-\Gamma(u)}z\in\DM_{\rm loc}(\Omega)$. To establish inequality~\eqref{eq:prov3}, for $0\leq \varphi \in C_c^1(\Omega)$, we take $e^{-\Gamma_n(u_n)} \varphi \in \bvo\cap \lio$ as test function in~\eqref{eq:app_weak_form}, which yields after using the Leibniz rule~\eqref{eq:Prod} to
\begin{equation}
\label{eq:prov1}
\into \left(z_n, De^{-\Gamma_n(u_n)}\right) \varphi + \into e^{-\Gamma_n(u_n)} z_n \cdot \nabla \varphi + \into g_n(u_n) e^{-\Gamma_n(u_n)} \varphi |Du_n| = \into h_n(u_n) e^{-\Gamma_n(u_n)} f \varphi.
\end{equation}
Since $s\mapsto e^{-\Gamma_n(s)}$ is nonincreasing and $e^{-\Gamma_n(u_n)} \in \bvo \cap \lio$, we can apply Lemma~\ref{lem:Composition2} to ensure that $\left(z_n, De^{-\Gamma_n(u_n)} \right) = -\left| De^{-\Gamma_n(u_n)} \right|$. Moreover, since $D^j u_n = 0$, it follows that $\left| De^{-\Gamma_n(u_n)} \right| = g_n(u_n) e^{-\Gamma_n(u_n)} |Du_n|$. Cancelling terms in~\eqref{eq:prov1}, one obtains
\begin{equation}
\label{eq:prov2}
\into e^{-\Gamma_n(u_n)} z_n \cdot \nabla \varphi = \into h_n(u_n) e^{-\Gamma_n(u_n)} f \varphi.
\end{equation}
Now we pass to the limit in~\eqref{eq:prov2}. With respect to the first integral in~\eqref{eq:prov2}, the passage to the limit is immediate because $z_n\rightharpoonup z$ *-weakly in $L^\infty(\Omega)^N$ and $e^{-\Gamma_n(u_n)} \to e^{-\Gamma(u)}$ in $L^1(\Omega)$. Regarding the second integral in~\eqref{eq:prov2}, we use the Fatou Lemma, which is applicable thanks to Lemma~\ref{lem:bound_hf}. Hence, we obtain that
\begin{equation*}
\into e^{-\Gamma(u)} z \cdot \nabla \varphi \geq \into h(u) e^{-\Gamma(u)} f \varphi,
\end{equation*}
and thus~\eqref{eq:prov3} follows.
\medskip

\textbf{Step 3.} A distributional inequality related to~\eqref{eq:def_dist}.
\smallskip

We aim to show that
\begin{equation}
\label{eq:prov10}
-\operatorname{div}z + |D\Gamma(u)| \leq h(u)f \text{ in } \mathcal{D}'(\Omega).
\end{equation}
Observe that this implies that $z \in \DM_{\rm loc}(\Omega)$. To prove~\eqref{eq:prov10}, we distinguish between two cases: $h(0)<\infty$ and $h(0)=\infty$.

Suppose first that $h(0)<\infty$. Then, given $0\leq \varphi \in C_c^1(\Omega)$, the distributional formulation of~\eqref{eq:PbApprox} yields
\begin{equation}
\label{eq:prov4}
\into z_n \cdot \nabla \varphi + \into \varphi |D\Gamma_n(u_n)| = \into h_n(u_n)f \varphi.
\end{equation}
To pass to the limit in~\eqref{eq:prov4}, we exploit the fact that ${z_n\rightharpoonup z}$ *-weakly in $L^\infty(\Omega)^N$, together with the boundedness of $h$, which follows from $h(0)<\infty$ and assumption~\eqref{eq:hyp_h_inf}. With respect to the second term in~\eqref{eq:prov4}, we rely on the lower semicontinuity of the functional $u \mapsto \into \varphi |Du|$ defined in $\bvo$ with respect to the $L^1$-convergence; recall that $\Gamma_n(u_n)\to \Gamma(u)$ in $L^1(\Omega)$. Therefore, passing to the limit in~\eqref{eq:prov4} we arrive at
\begin{equation}
\label{eq:prov4a}
\into z \cdot \nabla \varphi + \into \varphi |D\Gamma(u)| \leq \into h(u)f \varphi,
\end{equation}
and then~\eqref{eq:prov10} holds.

On the contrary, suppose now that $h(0)=\infty$. In this case, as previously noted, $u>0$ $\ae$ in $\Omega$ (recall that this follows from the fact that $h(u)f\in\luo$ and $f>0$). Let $0\leq \varphi \in C_c^1(\Omega)$ and $\delta>0$, and let us define the nondecreasing function $S_\delta(s) = \min \big\{1,\max \big\{0,\frac{s-\delta}{\delta}\big \}\big\}$ for all $s\geq 0$. Taking $S_\delta(u_n) \varphi \in\bvo\cap\lio$ as test function in~\eqref{eq:app_weak_form} and using the Leibniz rule~\eqref{eq:Prod}, it yields
\begin{equation}
\label{eq:prov9}
\into \varphi (z_n, DS_\delta(u_n)) + \into S_\delta(u_n) z_n \cdot \nabla \varphi + \into g_n(u_n)S_\delta(u_n) \varphi |Du_n| = \into h_n(u_n)f S_\delta(u_n) \varphi.
\end{equation}
Since the function $S_\delta$ is nondecreasing and $S_\delta(u_n) \in\bvo\cap\lio$, an application of Lemma~\ref{lem:Composition2} gives $(z_n, DS_\delta(u_n)) = |DS_\delta(u_n)|$. Then, defining
\[
\Psi_{\delta}(s) = \int_0^s S_\delta(t) g(t) \, \mathrm{d}t,\ \forall s\geq 0, \qquad \text{ and } \qquad \Psi_{\delta,n}(s) = \int_0^s S_\delta(t) g_n(t) \, \mathrm{d}t,\ \forall s\geq 0,
\] 
and recalling that $u_n$ has no jump part,~\eqref{eq:prov9} can be rewritten as
\begin{equation}
\label{eq:prov9a}
\into \varphi |DS_\delta(u_n)| + \into S_\delta(u_n) z_n \cdot \nabla \varphi + \into \varphi |D\Psi_{\delta,n} (u_n)| = \into h_n(u_n)f S_\delta(u_n) \varphi.
\end{equation}
In the following, we pass to the limit in~\eqref{eq:prov9a} when $n\to\infty$. Regarding the first and the third term, we exploit the lower semicontinuity of the functional $u \mapsto \into \varphi |Du|$ defined in $\bvo$ with respect to the $L^1$-covergence; note that $S_\delta(u_n)\to S_\delta(u)$ and $\Psi_{\delta,n}(u_n)\to \Psi_\delta(u)$ in $L^1(\Omega)$. In the second integral, one can take advantage of the fact that ${z_n\rightharpoonup z}$ *-weakly in $L^\infty(\Omega)^N$. With respect to the last term, observe that for every $n\in\N$, the product $h_n(s)S_\delta(s)$ vanishes for $s<\delta$ and, since assumption~\eqref{eq:hyp_h_inf} is in force, the Lebesgue Theorem can be applied. In this way, we obtain that
\begin{equation}
\label{eq:prov9b}
\into \varphi |DS_\delta(u)| + \into S_\delta(u) z \cdot \nabla \varphi + \into \varphi |D\Psi_{\delta} (u)| \leq \into h(u)f S_\delta(u) \varphi.
\end{equation}
Next, we pass to the limit in~\eqref{eq:prov9b} as $\delta\to 0^+$. To this aim, observe that $S_\delta(u) \to \chi_{\{u>0\}}$ in $\luo$ when $\delta$ goes to 0. With respect to the first and the third term of~\eqref{eq:prov9b}, we use again the lower semicontinuity of the functional $u \mapsto \into \varphi |Du|$; note that $\Psi_\delta(u)\to \Gamma(u)$ in $\luo$ when $\delta\to 0^+$. Regarding the last integral, the Lebesgue Theorem can be applied since $h(u)f\in\luo$. Hence, it yields
\begin{equation}
\label{eq:prov9c}
\into \varphi |D\chi_{\{u>0\}}| + \into \chi_{\{u>0\}} z \cdot \nabla \varphi + \into \varphi |D\Gamma (u)| \leq \int_{\{u>0\}} h(u)f \varphi.
\end{equation}
Finally, since $u>0$ $\ae$ in $\Omega$, the inequality~\eqref{eq:prov9c} becomes~\eqref{eq:prov4a} and then~\eqref{eq:prov10} follows.
\medskip

\textbf{Step 4.} The limit $u$ has no jump part.
\smallskip

Here, we use the alternative pairing defined in~\eqref{eq:new_Pairing}. With the notation introduced there, we consider $\beta(s) = -e^{-s}$, $v=\Gamma(u)$ and the pairing $\left( z, D\beta(v)^\# \right)$. From now on, when we write $\left(z, D\left( -e^{-\Gamma(u)} \right)^\# \right)$ we mean the pairing $\left( z, D\beta(v)^\# \right)$.
One deduces that, as measures in $\Omega$, it holds
\begin{equation}
\label{eq:prov11}
\begin{split}
    \left(e^{-\Gamma(u)} \right)^\# |D\Gamma(u)| & \overset{\eqref{eq:prov10}}{\leq} \left(e^{-\Gamma(u)} \right)^\# \operatorname{div}z + e^{-\Gamma(u)} h(u)f \overset{\eqref{eq:prov3}}{\leq} \left(e^{-\Gamma(u)} \right)^\# \operatorname{div}z - \operatorname{div} \left( e^{-\Gamma(u)}z \right)\\
    & \overset{\eqref{eq:new_Pairing}}{=} \left(z, D\left( -e^{-\Gamma(u)} \right)^\# \right) \leq \left|D\left( -e^{-\Gamma(u)} \right) \right| \overset{\eqref{eq:chain_rule_2}}{=} \left(e^{-\Gamma(u)} \right)^\# |D\Gamma(u)|.
\end{split}
\end{equation}
In this way, all the inequalities in~\eqref{eq:prov11} are equalities. In particular, we have that
\begin{equation}
\label{eq:prov12}
    \left(z, D\left( -e^{-\Gamma(u)} \right)^\# \right) = \left|D \left( -e^{-\Gamma(u)} \right) \right| \text{ as measures in } \Omega.
\end{equation}
We can then use Lemma~\ref{lem:no_jump} in~\eqref{eq:prov10} with $\beta(s)=-e^{-s}$ and $v=\Gamma(u)$ to deduce that $D^j \Gamma(u)=0$. Since $\Gamma$ is increasing, we conclude that $D^j u=0$.
\medskip

\textbf{Step 5.} Identification of the vector field $z$ given by~\eqref{eq:def_pair}.
\smallskip

Since $u$ has no jump part and $s\mapsto -e^{-\Gamma(s)}$ is an increasing mapping, then $e^{-\Gamma(u)}$ also has no jump part. Therefore, the identity $\left(- e^{-\Gamma(u)} \right)^\# = -e^{-\Gamma(u)}$ holds $\mathcal{H}^{N-1}$-$\ae$ in $\Omega$ and~\eqref{eq:prov12} can be written as
\[
\left(z, D\left( -e^{-\Gamma(u)} \right) \right) = \left|D \left( -e^{-\Gamma(u)} \right) \right| \text{ as measures in } \Omega.
\]
Given $k>0$, we define the nondecreasing truncation function $\tilde T_k(s) := \min \left\{s, -e^{-\Gamma(k)} \right\}$ for any $s\in\R$. An application of Lemma~\ref{lem:Composition} then yields that
\begin{equation*}
    \left(z, D\tilde T_k\left( -e^{-\Gamma(u)} \right) \right) = \left|D \tilde T_k \left( -e^{-\Gamma(u)} \right) \right| \text{ as measures in } \Omega.
\end{equation*}
Note that $\tilde T_k \left( -e^{-\Gamma(u)} \right) = -e^{-\Gamma(T_k(u))}$. As $s\mapsto -e^{-\Gamma(s)}$ is increasing and $T_k(u)\in \bvo\cap\lio$, using again Lemma~\ref{lem:Composition} (see Remark~\ref{rem:Composition_incr}) we obtain that
\begin{equation}
\label{eq:prov12a}
\left(z, DT_k(u) \right) = \left|DT_k(u) \right| \text{ as measures in } \Omega \text{ for any } k>0.
\end{equation}

\textbf{Step 6.} Distributional formulation~\eqref{eq:def_dist}.
\smallskip

From~\eqref{eq:prov11}, we deduce that
\begin{equation*}
    e^{-\Gamma(u)} (-\operatorname{div} z + |D\Gamma(u)| - h(u)f) = 0 \text{ as measures in } \Omega.
\end{equation*}
Using~\eqref{eq:prov10} and that $e^{-\Gamma(u)}>0$ $\mathcal{H}^{N-1}$-$\ae$ in $\Omega$ because $\Gamma(u) \in \bvo$, one may argue by contradiction to obtain that
\begin{equation}
    \label{eq:prov12bis}
    -\operatorname{div} z + |D\Gamma(u)| = h(u)f \text{ as measures in } \Omega.
\end{equation}
Since $u$ has no jump part, Lemma~\ref{lem:ChainRule} implies that $|D\Gamma(u)| = g(u) |Du|$. From~\eqref{eq:prov12bis} it also follows that $z\in \DM(\Omega)$.
\medskip

\textbf{Step 7.} Boundary condition~\eqref{eq:def_bord}.
\smallskip

Let $\delta >0$. First, we point out that the identity $G_\delta(T_{1+\delta}(s)) = T_1(G_\delta(s))$ holds for any $s\in\R$, and that $G_\delta(T_{1+\delta}(s))=0$ whenever $|s|\leq\delta$. In~\eqref{eq:app_weak_form}, we take $G_\delta(T_{1+\delta}(u_n)) \in \bvo \cap\lio$ as test function to obtain that
\begin{equation}
\label{eq:prov13}
\into |DG_\delta(T_{1+\delta}(u_n))| + \into G_\delta(T_{1+\delta}(u_n)) g_n(u_n) |Du_n| = \into G_\delta(T_{1+\delta}(u_n)) h_n(u_n) f,
\end{equation}
where we have taken into account Lemma~\ref{lem:Composition} and that $u_n\lvert_{\partial\Omega} = 0$. Now, we define the $C^1$ functions
\[
\tilde \Gamma_{\delta}(s) = \int_0^s G_\delta(T_{1+\delta}(t)) g(t) \, \mathrm{d}t,\ \forall s\geq 0, \qquad \text{ and } \qquad \tilde \Gamma_{\delta,n}(s) = \int_0^s G_\delta(T_{1+\delta}(t)) g_n(t) \, \mathrm{d}t,\ \forall s\geq 0.
\]
Observe that $\tilde \Gamma_{\delta}(s), \tilde \Gamma_{\delta,n}(s) > 0$ if and only if $s>\delta$. Using the chain rule,~\eqref{eq:prov13} can be rewritten as
\begin{equation}
\label{eq:prov14}
\into |DG_\delta(T_{1+\delta}(u_n))| + \into |D\tilde\Gamma_{\delta, n}(u_n)| = \into G_\delta(T_{1+\delta}(u_n)) h_n(u_n) f.
\end{equation}
On the left-hand side, we use the lower semicontinuity of the functional $u\mapsto \into |Du| + \intdo |u| \ \dH$ defined in $\bvo$ with respect to the $L^1$-convergence, whereas on the right-hand side we can use the Lebesgue Theorem (note that $G_\delta(T_{1+\delta}(s)) h_n(s)=0$ when $s\leq \delta$ and~\eqref{eq:hyp_h_inf} is in force). Then, passing to the limit in~\eqref{eq:prov14} we obtain that
\begin{equation}
\label{eq:prov15}
\into |DG_\delta(T_{1+\delta}(u))| + \intdo G_\delta(T_{1+\delta}(u)) \ \dH + \into |D\tilde\Gamma_{\delta}(u)| + \intdo \tilde\Gamma_{\delta}(u) \ \dH \leq \into G_\delta(T_{1+\delta}(u)) h(u) f.
\end{equation}

On the other hand, we take $G_\delta(T_{1+\delta}(u))$ as test function in~\eqref{eq:prov12bis} to obtain, after using the Green identity~\eqref{eq:Green} and applying Lemma~\ref{lem:Composition} in~\eqref{eq:prov12a}, that
\begin{equation}
\label{eq:prov16}
\begin{split}
\into G_\delta(T_{1+\delta}(u)) h(u) f &= \into (z, DG_\delta(T_{1+\delta}(u))) - \intdo [G_\delta(T_{1+\delta}(u))z,\nu]\ \dH + \into G_\delta(T_{1+\delta}(u)) g(u) |Du| \\
&= \into |DG_\delta(T_{1+\delta}(u))| - \intdo [G_\delta(T_{1+\delta}(u))z,\nu] \ \dH + \into |D\tilde\Gamma_\delta (u)|.
\end{split}
\end{equation}
We emphasize that, for the identity $|D\tilde\Gamma_\delta (u)| = G_\delta(T_{1+\delta}(u)) g(u) |Du|$ to be true, it is essential that $\Gamma(u)\in \bvo$ (note that $g$ may be unbounded at infinity). Indeed, this implies that $\tilde\Gamma_\delta(u)\in \bvo$ and then Lemma~\ref{lem:ChainRule} can be applied.

Substituting~\eqref{eq:prov16} in~\eqref{eq:prov15}, it follows that
\begin{equation}
\label{prov17}
\intdo G_\delta(T_{1+\delta}(u)) + [G_\delta(T_{1+\delta}(u))z,\nu] \ \dH + \intdo \tilde\Gamma_{\delta}(u) \ \dH \leq 0.
\end{equation}
Since $\big| [G_\delta(T_{1+\delta}(u))z,\nu] \big| \leq G_\delta(T_{1+\delta}(u))$ $\mathcal{H}^{N-1}$-$\ae$ on $\partial\Omega$, then the first integral in~\eqref{prov17} is nonnegative. It follows that the second integral in~\eqref{prov17} is identically zero and thus $\tilde\Gamma_{\delta}(u)=0$ $\mathcal{H}^{N-1}$-$\ae$ on $\partial\Omega$. This yields $\|u\|_{L^\infty(\partial\Omega)} \leq \delta$ and, since $\delta>0$ is arbitrary, we conclude that $u(x)=0$ for $\mathcal{H}^{N-1}$-a.e. $x\in\partial\Omega$.
\end{proof}

\section{Proof of the comparison principle}
\label{sec:Pf_Comp}

The present section is devoted to the demonstration of Theorem~\ref{th:Comp}. To this end, we establish some preliminary results. First, we show that any solution to~\eqref{eq:PbMain} satisfies a certain distributional identity.

\begin{proposition}
\label{prop:dist_comp}
Assume that $g$ verifies~\eqref{eq:hyp_g_int}. If $u\in\bvo$ is a solution to~\eqref{eq:PbMain} with associated vector field $z\in \DM(\Omega)$, then it holds that
\begin{equation*}
-\operatorname{div}\left(e^{-\Gamma(u)}z \right) = h(u)e^{-\Gamma(u)} f \text{ as measures in }\Omega.
\end{equation*}
\end{proposition}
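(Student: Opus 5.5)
The plan is to compute $\operatorname{div}(e^{-\Gamma(u)}z)$ through the Leibniz rule for the pairing, and to observe that the term coming from $D e^{-\Gamma(u)}$ cancels exactly the gradient term of the equation. The main difficulty is that $e^{-\Gamma(u)}$ need not be bounded away from zero and $u$ need not be bounded. I will therefore work with the truncations $T_k(u)$, where everything is bounded and in $BV$, and pass to the limit $k\to\infty$ at the end.

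\textbf{Setup.} Fix $0\le\varphi\in C_c^1(\Omega)$ (by linearity, general $\varphi$ follows) and $k>0$. Set $w_k:=e^{-\Gamma(T_k(u))}$. The function $s\mapsto e^{-\Gamma(T_k(s))}$ is Lipschitz on the range of $u$: $g$ is bounded on $[\delta,k]$, and near the origin $g$ is only integrable, but $\Gamma(u)\in BV(\Omega)$. Rather than relying on Lipschitz continuity, I would write $w_k=\Phi(\Gamma(u))$ with $\Phi(t)=e^{-\min\{t,\Gamma(k)\}}$, which is Lipschitz. Hence $w_k\in BV(\Omega)\cap L^\infty(\Omega)$ with $0<e^{-\Gamma(k)}\le w_k\le1$, and it has no jump part since $D^ju=0$. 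Use $w_k\varphi$ as a test function in \eqref{eq:def_dist} and apply the Leibniz rule \eqref{eq:Prod}:
\[
\int_\Omega \varphi\,(z,Dw_k)+\int_\Omega w_k z\cdot\nabla\varphi+\int_\Omega w_k\varphi\, g(u)|Du|=\int_\Omega h(u)w_k f\varphi .
\]
The test function is admissible because $w_k\varphi\in BV\cap L^\infty$ with compact support, and both $\operatorname{div}z$ and $g(u)|Du|$ are finite measures.

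\textbf{Identifying the pairing.} Since $s\mapsto e^{-\Gamma(T_k(s))}$ is nonincreasing, apply Lemma~\ref{lem:Composition2} to $-w_k$, using \eqref{eq:def_pair}. This gives $(z,Dw_k)=-|Dw_k|$. By Lemma~\ref{lem:ChainRule} applied through $\Gamma(u)\in BV$, together with the absence of a jump part, we get
\[
|Dw_k|=g(u)e^{-\Gamma(u)}\chi_{\{u<k\}}|Du|.
\]
Here the level set $\{u=k\}$ carries no $|Du|$-mass, since $Du$ is diffuse. Substituting, the identity becomes
\[
\int_\Omega w_k z\cdot\nabla\varphi+\int_{\{u\ge k\}} e^{-\Gamma(k)}\varphi\, g(u)|Du|=\int_\Omega h(u)w_kf\varphi .
\]

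\textbf{Passage to the limit.} Let $k\to\infty$. On the left, the first integral converges to $\int_\Omega e^{-\Gamma(u)}z\cdot\nabla\varphi$ by dominated convergence, since $|w_k|\le1$. The second integral is bounded by $\int_{\{u\ge k\}}\varphi\,|D\Gamma(u)|$, which tends to $0$ because $|D\Gamma(u)|$ is a finite measure and $\bigcap_k\{u\ge k\}$ is $|D\Gamma(u)|$-null (as $u$ is finite $\mathcal H^{N-1}$-a.e.). On the right, $h(u)f\in L^1(\Omega)$ and $w_k\to e^{-\Gamma(u)}$, so dominated convergence applies. We obtain $\int_\Omega e^{-\Gamma(u)}z\cdot\nabla\varphi=\int_\Omega h(u)e^{-\Gamma(u)}f\varphi$, which is the claimed identity in $\mathcal D'(\Omega)$. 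Since the right-hand side is an $L^1$ function, the identity holds as measures.

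\textbf{Main obstacle.} The delicate step is justifying the chain rule identity for $|Dw_k|$ when $g$ is singular at $0$. I would first try to use Lemma~\ref{lem:ChainRule} with $\Gamma(0)=0$ finite, applied to the function $\Gamma(T_k(u))=T_{\Gamma(k)}(\Gamma(u))$, which lies in $BV$. I would then combine it with the classical chain rule for the Lipschitz map $t\mapsto e^{-t}$ composed with $\Gamma(T_k(u))$.
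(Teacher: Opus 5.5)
Your argument is correct and is essentially the paper's proof. Both test \eqref{eq:def_dist} with $e^{-\Gamma(u)}\varphi$, rewrite the divergence term through the pairing, obtain $(z,De^{-\Gamma(u)})=-|De^{-\Gamma(u)}|$ from Lemma~\ref{lem:Composition2}, and use the chain rule to cancel this against $g(u)e^{-\Gamma(u)}|Du|$. The only difference is your truncation $w_k$, which the paper does not need: $e^{-\Gamma(u)}=\Phi(\Gamma(u))$ with $\Phi(t)=e^{-t}$ $1$-Lipschitz on $[0,\infty)$ already lies in $BV(\Omega)\cap L^\infty(\Omega)$, so being bounded away from zero plays no role, and the possible unboundedness of $u$ is absorbed by Lemma~\ref{lem:Composition2}, whose proof performs exactly your $k\to\infty$ limit.
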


\begin{remark}
Since $h(u)f$ belongs to $\luo$, it follows that $e^{-\Gamma(u)}z\in \DM(\Omega)$.
\end{remark}

\begin{proof}
Let $\varphi\in C^1_c(\Omega)$. Taking the test function $e^{-\Gamma(u)}\varphi \in \bvo\cap\lio$ in~\eqref{eq:def_dist}, we obtain that
\[
-\into e^{-\Gamma(u)}\varphi \operatorname{div}z + \into g(u) e^{-\Gamma(u)}\varphi |Du| = \into h(u) e^{-\Gamma(u)} f \varphi.
\]
Since $e^{-\Gamma(u)} \in \bvo\cap\lio$, the pairing $\big(z, De^{-\Gamma(u)} \big)$ is well-defined, and we can use its definition (see~\eqref{eq:Pairing}) to deduce that
\begin{equation}
\label{eq:Pf_Comp_1a}
\into e^{-\Gamma(u)} z \cdot \nabla \varphi + \into \varphi \left(z, De^{-\Gamma(u)} \right) + \into g(u) e^{-\Gamma(u)}\varphi |Du| = \into h(u) e^{-\Gamma(u)} f \varphi.
\end{equation}
By virtue of Lemma~\ref{lem:Composition2}, since the mapping $s\mapsto e^{-\Gamma(s)}$ is nonincreasing and $e^{-\Gamma(u)} \in \bvo \cap \lio$, it follows that $\big(z, De^{-\Gamma(u)}\big) = - \big|De^{-\Gamma(u)} \big|$. Moreover, the chain rule (Lemma~\ref{lem:ChainRule}) implies that $\big|De^{-\Gamma(u)} \big| = g(u) e^{-\Gamma(u)} |Du|$. Therefore,~\eqref{eq:Pf_Comp_1a} becomes
\[
\into e^{-\Gamma(u)} z \cdot \nabla \varphi = \into h(u) e^{-\Gamma(u)} f \varphi,
\]
and thus our claim follows.
\end{proof}

Next, we prove a technical lemma.

\begin{lemma}
\label{lem:tec_inequality}
Under the assumptions of Theorem~\ref{th:Comp}, the following inequality holds true:
\[
\int_{\{u_1>u_2\}} \left(e^{-\Gamma(u_2)} - e^{-\Gamma(u_1)} \right) (|D\Gamma(u_2)|-|D\Gamma(u_1)|) \geq 0.
\]
\end{lemma}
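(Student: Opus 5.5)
The plan is to write the integrand in terms of $w_i:=e^{-\Gamma(u_i)}$ and $v_i:=\Gamma(u_i)$, and to use the equation \eqref{eq:def_dist} for both solutions. Since $\Gamma$ is increasing, $\{u_1>u_2\}=\{w_2>w_1\}$, so the integral in the statement equals
\[
I:=\into (w_2-w_1)^+\,(|Dv_2|-|Dv_1|).
\]
Several facts make every term meaningful and let us work with precise representatives without ambiguity:
\begin{itemize}
\item[--] $u_i$ has no jump part, so neither do $v_i$, $w_i$ or $(w_2-w_1)^+$.
\item[--] $(w_2-w_1)^+\in BV(\Omega)\cap L^\infty(\Omega)$ and vanishes $\mathcal H^{N-1}$-a.e.\ on $\partial\Omega$, because $u_i=0$ there.
\item[--] Each $\operatorname{div}z_i$ is a finite measure absolutely continuous with respect to $\mathcal H^{N-1}$.
\end{itemize}
By Lemma~\ref{lem:ChainRule}, \eqref{eq:def_dist} reads $|Dv_i|=h(u_i)f_i+\operatorname{div}z_i$ as measures. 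Hence
\[
I=\into (w_2-w_1)^+\big(h(u_2)f_2-h(u_1)f_1\big)+\into (w_2-w_1)^+\big(\operatorname{div}z_2-\operatorname{div}z_1\big)=:I_1+I_2 .
\]

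The term $I_1$ is handled pointwise. On $\{u_1>u_2\}$, assumption \eqref{eq:hyp_comp} gives $h(u_2)\ge h(u_1)$, and $f_2\ge f_1\ge0$. Therefore $h(u_2)f_2\ge h(u_1)f_1$ there, and $I_1\ge0$. Integrability is clear, since $h(u_i)f_i\in L^1(\Omega)$ and $(w_2-w_1)^+\le 1$.

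For $I_2$, I apply the Green formula \eqref{eq:Green} with $z=z_i$ and the test function $(w_2-w_1)^+$; the boundary term vanishes. This gives
\[
I_2=-\into (z_2,D(w_2-w_1)^+)+\into (z_1,D(w_2-w_1)^+).
\]
Next, arguing as in Lemma~\ref{lem:Composition} (or through the Leibniz rule with $\chi^*_{\{w_2>w_1\}}$, legitimate because there are no jumps), I localize each pairing:
\[
(z_i,D(w_2-w_1)^+)=\chi^*_{\{w_2>w_1\}}\big((z_i,Dw_2)-(z_i,Dw_1)\big).
\]
By Lemma~\ref{lem:Composition2}, applied to the nonincreasing map $s\mapsto e^{-\Gamma(s)}$, we have $(z_i,Dw_i)=-|Dw_i|$. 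By \eqref{eq:AbsCont}, $|(z_i,Dw_j)|\le|Dw_j|$. Let $E=\{w_2>w_1\}$ (with precise representatives). Then
\[
-(z_2,Dw_2)+(z_2,Dw_1)\ge |Dw_2|-|Dw_1|\qquad\text{and}\qquad -(z_1,Dw_2)+(z_1,Dw_1)\le |Dw_2|-|Dw_1|.
\]
Integrating both over $E$, the two contributions cancel in the lower bound, so $I_2\ge \int_E(|Dw_2|-|Dw_1|)-\int_E(|Dw_2|-|Dw_1|)=0$. Combined with $I_1\ge0$, this gives $I\ge0$.

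The step I expect to need the most care is the localization of the pairing to $\{w_2>w_1\}$ for the difference $w_2-w_1$ of two different $BV$ functions. I would justify it by applying Lemma~\ref{lem:Composition} to $\Psi(s)=s^+$ with $u=w_2-w_1$, together with the Leibniz rule \eqref{eq:Prod}. Since all the functions involved are diffuse, $\chi^*_{\{w_2>w_1\}}$ is well defined $|Dw_i|$-a.e.
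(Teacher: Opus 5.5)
Your proof is correct and is essentially the paper's argument. The paper tests both equations with $(e^{-\Gamma(u_2)}-e^{-\Gamma(u_1)})^+$, compares the right-hand sides using the monotonicity of $h$ and $f_1\le f_2$, and bounds the cross pairings by the total variations, which is exactly your splitting $I=I_1+I_2$ written in a different order. The paper also uses your localization of the pairing to $\{w_2>w_1\}$, implicitly; it follows from Lemma~\ref{lem:Composition} with $\Psi(s)=s^+$ together with the chain rule $|D(w_2-w_1)^+|=\chi_{\{w_2>w_1\}}|D(w_2-w_1)|$, and the Leibniz rule is not the right tool there, since $\chi_{\{w_2>w_1\}}$ need not belong to $BV$.
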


\begin{proof}
First, we take the test function $\left( e^{-\Gamma(u_2)} - e^{-\Gamma(u_1)} \right)^+ \in \bvo\cap \lio$ in problem~\eqref{eq:PbComp} with ${i=1}$. Since $\left( e^{-\Gamma(u_2)} - e^{-\Gamma(u_1)} \right)^+$ has zero boundary trace and is identically zero if $u_1\leq u_2$ (recall that $\Gamma$ is increasing), applying the Green identity~\eqref{eq:Green} we have that
\begin{equation}
\label{eq:Pf_Comp_1}
\begin{split}
    \int_{\{u_1 > u_2\}} \left( e^{-\Gamma(u_2)} - e^{-\Gamma(u_1)} \right)^+ h(u_1) f_1 =& \int_{\{u_1 > u_2\}} \left(z_1, D\left( e^{-\Gamma(u_2)} - e^{-\Gamma(u_1)} \right) \right) + \int_{\{u_1 > u_2\}} \left( e^{-\Gamma(u_2)} - e^{-\Gamma(u_1)} \right) |D\Gamma(u_1)| \\
    =& \int_{\{u_1 > u_2\}} \left(z_1, De^{-\Gamma(u_2)} \right) + \int_{\{u_1 > u_2\}} \left(z_1, D\left(-e^{-\Gamma(u_1)} \right) \right)\\
    &+ \int_{\{u_1 > u_2\}} \left( e^{-\Gamma(u_2)} - e^{-\Gamma(u_1)} \right) |D\Gamma(u_1)| \\
    =& \int_{\{u_1 > u_2\}} \left(z_1, De^{-\Gamma(u_2)} \right) + \int_{\{u_1 > u_2\}} e^{-\Gamma(u_2)}\left| D\Gamma(u_1) \right|,
\end{split}
\end{equation}
where we have used Lemma~\ref{lem:Composition2} and the chain rule ($D^j u_1=0$) to deduce that
\[
\int_{\{u_1 > u_2\}} \left(z_1, D\left(-e^{-\Gamma(u_1)} \right) \right) = \int_{\{u_1 > u_2\}} \left| D\left(-e^{-\Gamma(u_1)} \right) \right| = \int_{\{u_1 > u_2\}} e^{-\Gamma(u_1)}\left| D\Gamma(u_1) \right|.
\]

On the other hand, taking $\left( e^{-\Gamma(u_2)} - e^{-\Gamma(u_1)} \right)^+$ as test function in problem~\eqref{eq:PbComp} with $i=2$ and making similar computations, we obtain that
\begin{equation}
\label{eq:Pf_Comp_2}
\int_{\{u_1 > u_2\}} \left( e^{-\Gamma(u_2)} - e^{-\Gamma(u_1)} \right)^+ h(u_2) f_2 = -\int_{\{u_1 > u_2\}} \left(z_2, De^{-\Gamma(u_1)} \right) - \int_{\{u_1 > u_2\}} e^{-\Gamma(u_1)}\left| D\Gamma(u_2) \right|
\end{equation}

Since $h$ is nonincreasing by~\eqref{eq:hyp_comp} and $f_1\leq f_2$, we have that $h(u_1)f_1 \leq h(u_2)f_2$ in $\{u_1 > u_2\}$. Then, we can join expressions~\eqref{eq:Pf_Comp_1} and~\eqref{eq:Pf_Comp_2} to deduce that
\begin{equation}
\label{eq:Pf_Comp_3}
\begin{split}
\int_{\{u_1 > u_2\}} e^{-\Gamma(u_2)}\left| D\Gamma(u_1) \right| + \int_{\{u_1 > u_2\}} e^{-\Gamma(u_1)}\left| D\Gamma(u_2) \right|
&\leq -\int_{\{u_1 > u_2\}} \left(z_2, De^{-\Gamma(u_1)} \right) - \int_{\{u_1 > u_2\}} \left(z_1, De^{-\Gamma(u_2)} \right) \\
& \leq \int_{\{u_1 > u_2\}} \left| \left(z_2, De^{-\Gamma(u_1)} \right) \right| + \int_{\{u_1 > u_2\}} \left| \left(z_1, De^{-\Gamma(u_2)} \right) \right| \\
& \leq \int_{\{u_1 > u_2\}} \left| De^{-\Gamma(u_1)} \right| + \int_{\{u_1 > u_2\}} \left| De^{-\Gamma(u_2)} \right| \\
&= \int_{\{u_1 > u_2\}} e^{-\Gamma(u_1)}\left| D\Gamma(u_1) \right| + \int_{\{u_1 > u_2\}} e^{-\Gamma(u_2)}\left| D\Gamma(u_2) \right|,
\end{split}
\end{equation}
where we have used the chain rule and that $\|z_i\|_{\lio^N}\leq 1$ for $i=1,2$. Finally, reordering~\eqref{eq:Pf_Comp_3} we obtain our claim.
\end{proof}

Now, we are in a position to show Theorem~\ref{th:Comp}. As we have already mentioned, our argument is inspired by that of~\cite[Theorem~3.5]{LaSe2}. The proof is as follows.

\begin{proof}[Proof of Theorem~\ref{th:Comp}]

First of all, by Proposition~\ref{prop:dist_comp} we know that
\begin{gather}
\label{eq:Pf_Comp_3a}
-\operatorname{div}\left(e^{-\Gamma(u_1)}z_1 \right) = h(u_1)e^{-\Gamma(u_1)} f_1,\\
\label{eq:Pf_Comp_3b}
-\operatorname{div}\left(e^{-\Gamma(u_2)}z_2 \right) = h(u_2)e^{-\Gamma(u_2)} f_2,
\end{gather}
as measures in $\Omega$. To ease the notation, for any $k>0$ we define the truncated functions
\[
\Gamma_k(s):=\Gamma(T_k(s)),\ \forall s\geq 0.
\]
Given $k>0$, we take $\big(\Gamma_k(u_1) - \Gamma_k(u_2) \big)^+ \in \bvo\cap\lio$ as test function in~\eqref{eq:Pf_Comp_3a}. Since this test function has zero boundary trace, the Green identity~\eqref{eq:Green} gives that
\begin{equation}
\label{eq:Pf_Comp_4}
\into \left( e^{-\Gamma(u_1)} z_1, D \big(\Gamma_k(u_1) - \Gamma_k(u_2) \big)^+ \right) = \into \big(\Gamma_k(u_1) - \Gamma_k(u_2) \big)^+ h(u_1)e^{-\Gamma(u_1)} f_1.
\end{equation}
Performing the same computations in~\eqref{eq:Pf_Comp_3b}, it follows that
\begin{equation}
\label{eq:Pf_Comp_4a}
\into \left( e^{-\Gamma(u_2)} z_2, D \big(\Gamma_k(u_1) - \Gamma_k(u_2) \big)^+ \right) = \into \big(\Gamma_k(u_1) - \Gamma_k(u_2) \big)^+ h(u_2)e^{-\Gamma(u_2)} f_2.
\end{equation}
Subtracting~\eqref{eq:Pf_Comp_4a} to~\eqref{eq:Pf_Comp_4}, we deduce that
\begin{equation}
\label{eq:Pf_Comp_5}
\into \left( e^{-\Gamma(u_1)} z_1 - e^{-\Gamma(u_2)} z_2, D\big(\Gamma_k(u_1) - \Gamma_k(u_2)\big)^+ \right) = \into \big(\Gamma_k(u_1) - \Gamma_k(u_2)\big)^+ \left(h(u_1)e^{-\Gamma(u_1)} f_1 - h(u_2)e^{-\Gamma(u_2)} f_2 \right).
\end{equation}

On the one hand, observe that $s\mapsto h(s)e^{-\Gamma(s)}$ is nonincreasing; this follows from the fact that $h$ is nonincreasing by~\eqref{eq:hyp_comp} and $\Gamma$ is increasing. Using also that $f_1\leq f_2$ and that $\big(\Gamma_k(u_1) - \Gamma_k(u_2)\big)^+ = 0$ if $u_1\leq u_2$, we have
\begin{equation}
\label{eq:Pf_Comp_6}
\big(\Gamma_k(u_1) - \Gamma_k(u_2)\big)^+ \left(h(u_1)e^{-\Gamma(u_1)} f_1 - h(u_2)e^{-\Gamma(u_2)} f_2 \right) \leq 0 \ \ae \text{ in } \Omega.
\end{equation}

On the other hand, the limit of the left-hand side of~\eqref{eq:Pf_Comp_5} as $k$ diverges is nonnegative. This can be shown as follows. To simplify the notation, we denote $A_k:=\{\Gamma_k(u_1)> \Gamma_k(u_2)\}$. Taking into account that $\|z_i\|_{\lio^N} \leq 1$ and $(z_i,D\Gamma_k(u_i)) = |D\Gamma_k(u_i)|$ for $i=1,2$ thanks to Lemma~\ref{lem:Composition}, we deduce that
\begin{equation}
\label{eq:Pf_Comp_7}
\begin{split}
\into & \left( e^{-\Gamma(u_1)} z_1 - e^{-\Gamma(u_2)} z_2, D\big(\Gamma_k(u_1) - \Gamma_k(u_2)\big)^+ \right)\\
=& \int_{A_k} \left( e^{-\Gamma(u_1)} z_1 - e^{-\Gamma(u_2)} z_2, D\big(\Gamma_k(u_1) - \Gamma_k(u_2)\big) \right) \\
=& \int_{A_k} e^{-\Gamma(u_1)} |D\Gamma_k(u_1)| + \int_{A_k} e^{-\Gamma(u_2)} |D\Gamma_k(u_2)| - \int_{A_k} e^{-\Gamma(u_1)} (z_1, D\Gamma_k(u_2)) - \int_{A_k} e^{-\Gamma(u_2)} (z_2, D\Gamma_k(u_1)) \\
\geq & \int_{A_k} e^{-\Gamma(u_1)} |D\Gamma_k(u_1)| + \int_{A_k} e^{-\Gamma(u_2)} |D\Gamma_k(u_2)| - \int_{A_k} e^{-\Gamma(u_1)} |D\Gamma_k(u_2)| - \int_{A_k} e^{-\Gamma(u_2)} |D\Gamma_k(u_1)| \\
=& \int_{A_k\cap \{u_2<k\}} \left( e^{-\Gamma(u_2)} - e^{-\Gamma(u_1)} \right) |D\Gamma(u_2)| - \int_{A_k \cap \{u_1<k\}} \left( e^{-\Gamma(u_2)} - e^{-\Gamma(u_1)} \right) |D\Gamma(u_1)|.
\end{split}
\end{equation}
We point out that in~\eqref{eq:Pf_Comp_7} we have used the property $(wz,Dv) = w(z,Dv)$, which holds whenever $z\in \DM(\Omega)$ and $v,w\in\bvo\cap\lio$ are such that $D^j v = D^j w = 0$ (see~\cite[Proposition~2.3]{MaSe}).

Taking limits in~\eqref{eq:Pf_Comp_7} as $k\to \infty$, and using the Monotone Convergence Theorem on the right-hand side, we obtain that
\begin{equation}
\label{eq:Pf_Comp_8}
\liminf_{k\to \infty} \into \left( e^{-\Gamma(u_1)} z_1 - e^{-\Gamma(u_2)} z_2, D\big(\Gamma_k(u_1) - \Gamma_k(u_2)\big)^+ \right) \geq \int_{\{u_1>u_2\}} \left(e^{-\Gamma(u_2)} - e^{-\Gamma(u_1)} \right) (|D\Gamma(u_2)|-|D\Gamma(u_1)|) \geq 0,
\end{equation}
where the last inequality follows by Lemma~\ref{lem:tec_inequality}.

Now, taking into account the relation~\eqref{eq:Pf_Comp_5} and the inequalities~\eqref{eq:Pf_Comp_6} and~\eqref{eq:Pf_Comp_8}, we deduce that
\begin{equation}
\label{eq:Pf_Comp_9}
\lim_{k\to \infty} \into \big(\Gamma_k(u_1) - \Gamma_k(u_2)\big)^+ \left(h(u_1)e^{-\Gamma(u_1)} f_1 - h(u_2)e^{-\Gamma(u_2)} f_2 \right) = 0.
\end{equation}
Using again the Monotone Convergence Theorem in~\eqref{eq:Pf_Comp_9}, we conclude that
\begin{equation}
\label{eq:Pf_Comp_10}
\into \big(\Gamma(u_1) - \Gamma(u_2)\big)^+ \left(h(u_1)e^{-\Gamma(u_1)} f_1 - h(u_2)e^{-\Gamma(u_2)} f_2 \right) = 0.
\end{equation}

Suppose, by contradiction, that there exists a subset $\omega \subset \Omega$ with positive measure such that $u_1>u_2$ in $\omega$. Since $h(u_1)e^{-\Gamma(u_1)} f_1 \leq h(u_2)e^{-\Gamma(u_2)} f_2$ whenever $u_1>u_2$ (recall that $h$ is nonincreasing by~\eqref{eq:hyp_comp}), then equality~\eqref{eq:Pf_Comp_10} yields that
\begin{equation}
\label{eq:Pf_Comp_11}
h(u_1)e^{-\Gamma(u_1)} f_1 = h(u_2)e^{-\Gamma(u_2)} f_2\  \text{ in } \omega.
\end{equation}
At this point, we distinguish two cases: the first, in which $h$ is positive on $(0,\infty)$, and the second, in which $h$ vanishes at some $s_0>0$.

If $h(s)>0$ for all $s>0$, then the mapping $s\mapsto h(s)e^{-\Gamma(s)}$ is decreasing because $h$ is nonincreasing by~\eqref{eq:hyp_comp} and $\Gamma$ is increasing. Using also that $f_2>0$, one deduces
\begin{equation}
\label{eq:Pf_Comp_12}
h(u_1)e^{-\Gamma(u_1)} f_2 < h(u_2)e^{-\Gamma(u_2)} f_2\  \text{ in } \omega.
\end{equation}
Joining~\eqref{eq:Pf_Comp_11} and~\eqref{eq:Pf_Comp_12}, we obtain that $f_1>f_2$ in $\omega$, a contradiction.

Finally, if $h(s_0)=0$ for some $s_0>0$, the monotonicity of $h$ ensures that $h(s)=0$ for all $s\geq s_0$. In this case, both $u_1$ and $u_2$ belong to $\lio$ and are bounded by $s_0$, as shown in Proposition~\ref{prop:Bound_h}. Since the mapping $s\mapsto h(s)e^{-\Gamma(s)}$ is decreasing whenever $s\in (0,s_0]$, we may proceed as before to arrive at the same contradiction.

Therefore, we conclude that $u_1\leq u_2$ $\ae$ in $\Omega$.
\end{proof}

\section{$L^\infty$-regularity of solutions}
\label{sec:Bound}

\subsection{Existence of bounded solution when $h$ vanishes at some point}
\label{sec:Bound_h=0}

The existence results stated in Section~\ref{sec:Main} are valid if $h$ touches the axis at some point. However, in this setting, stronger results can be established. We first prove a preliminary result showing that if $h$ remains equal to zero from a certain point onwards, then any solution to~\eqref{eq:PbMain} is bounded.

\begin{proposition}
\label{prop:Bound_h}
Let $0<f\in L^1(\Omega)$. Assume that $g$ verifies~\eqref{eq:hyp_g_int} and that there exists some $s_0>0$ such that $h(s)=0$ for all $s\geq s_0$. Then any solution $u\in \bvo$ of~\eqref{eq:PbMain} belongs to $\lio$ and satisfies $\|u\|_\lio \leq s_0$.
\end{proposition}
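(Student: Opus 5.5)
The idea is to test the equation with $G_{s_0}(u)$, the part of $u$ above level $s_0$, where the right-hand side vanishes. Since $u$ may be unbounded, we first work with the bounded test function $G_{s_0}(T_\ell(u))\in\bvo\cap\lio$ for $\ell>s_0$. Its trace on $\partial\Omega$ is zero by~\eqref{eq:def_bord}. We use it in~\eqref{eq:def_dist} together with the Green identity~\eqref{eq:Green}. Because $z\in\DM(\Omega)$ and the test function vanishes on $\partial\Omega$, the boundary term $[G_{s_0}(T_\ell(u))z,\nu]=G_{s_0}(T_\ell(u))[z,\nu]$ vanishes. This gives
\[
\into \big(z, DG_{s_0}(T_\ell(u))\big) + \into G_{s_0}(T_\ell(u))\, g(u)|Du| = \into G_{s_0}(T_\ell(u))\, h(u) f .
\]

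The right-hand side is zero: $G_{s_0}(T_\ell(u))>0$ only where $u>s_0$, and there $h(u)=0$. For the pairing term, $G_{s_0}\circ T_\ell$ is continuous and nondecreasing and $G_{s_0}(T_\ell(u))$ is bounded and in $BV$. Hence Lemma~\ref{lem:Composition} (or Lemma~\ref{lem:Composition2}, starting from~\eqref{eq:def_pair}) gives $\big(z,DG_{s_0}(T_\ell(u))\big)=|DG_{s_0}(T_\ell(u))|$. The second term is nonnegative, since $g>0$ and the test function is nonnegative. It is also finite, because $g(u)|Du|$ is a finite measure by the remark after Definition~\ref{def:sol} and $G_{s_0}(T_\ell(u))^*$ is bounded, using $D^ju=0$ so the precise representative is unambiguous. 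Dropping this term, we obtain $\into|DG_{s_0}(T_\ell(u))|\le 0$.

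Therefore $G_{s_0}(T_\ell(u))$ has zero total variation and zero trace. By the Sobolev inequality~\eqref{eq:sob_embed}, or simply the $BV$ norm, it vanishes a.e. Since $\ell>s_0$ is arbitrary, we get $u\le s_0$ a.e., that is, $\|u\|_\lio\le s_0$.

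The main point of care is the justification of the Green formula and of the product $G_{s_0}(T_\ell(u))\,g(u)|Du|$ when $u$ is not yet known to be bounded. This is why we truncate at $\ell$ rather than testing with $G_{s_0}(u)$ directly. We also use that $h(u)f\in\luo$, so the integral of the right-hand side is meaningful and equal to zero.
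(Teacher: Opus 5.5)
Your proposal is correct and follows the paper's proof: test \eqref{eq:def_dist} with $G_{s_0}(T_\ell(u))$, use the Green identity with zero boundary trace, note that the right-hand side vanishes, identify the pairing with $|DG_{s_0}(T_\ell(u))|$ via Lemma~\ref{lem:Composition}, and drop the nonnegative gradient term. A single $\ell>s_0$ already gives $u\le s_0$, since $T_\ell(u)\le s_0<\ell$ forces $T_\ell(u)=u$, so the arbitrariness of $\ell$ is not needed.
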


\begin{proof}
Let $\ell >s_0>0$. Observe that $h(s)G_{s_0}(T_\ell (s))=0$ for all $s\geq 0$. In this way, taking $G_{s_0}(T_\ell (u)) \in \bvo\cap \lio$ as test function in~\eqref{eq:def_dist} and using the Green identity~\eqref{eq:Green}, one obtains that
\[
\into (z, DG_{s_0}(T_\ell (u))) + \into g(u) G_{s_0}(T_\ell (u)) |Du| = 0,
\]
where we have taken into account that $u\lvert_{\partial\Omega}=0$. Since the second integral is nonnegative, an application of Lemma~\ref{lem:Composition} yields
\begin{equation*}
\into |DG_{s_0}(T_\ell (u))|=0.
\end{equation*}
In this way, $G_{s_0}(T_\ell (u))\equiv 0$ and, since $\ell >s_0$, it follows that $\|u\|_\lio \leq s_0$.
\end{proof}

The previous result can be combined with Theorem~\ref{th:Exist2} to ensure the existence of a bounded solution to~\eqref{eq:PbMain} if $h$ vanishes at some point, independently of the behaviour of $g$ or $h$ at infinity.

\begin{theorem}
Let $0<f\in L^1(\Omega)$. Assume that $g$ verifies~\eqref{eq:hyp_g_int} and that there exists some $s_0>0$ such that $h(s_0)=0$. Then problem~\eqref{eq:PbMain} has a nonnegative solution $u\in BV(\Omega)\cap \lio$ satisfying that $\|u\|_\lio \leq s_0$.
\end{theorem}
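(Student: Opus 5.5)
The idea is to replace $h$ by a truncated nonlinearity that vanishes from $s_0$ on, apply Theorem~\ref{th:Exist2}(i), and then show that the resulting solution also solves the original problem. Define
\[
\tilde h(s) := \begin{cases} h(s) & \text{if } 0\le s\le s_0,\\ 0 & \text{if } s> s_0.\end{cases}
\]
Since $h$ is continuous and $h(s_0)=0$, $\tilde h\colon[0,\infty)\to[0,\infty]$ is continuous and finite outside the origin. It is also identically zero on $[s_0,\infty)$, so both \eqref{eq:hyp_h_inf} and $\limsup_{s\to\infty}\tilde h(s)s=0<\infty$ hold trivially. Note that no behaviour of $h$ beyond $s_0$ is needed; in particular $h$ need not vanish on all of $[s_0,\infty)$.

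\textbf{Existence for the modified problem.} Since $g$ satisfies \eqref{eq:hyp_g_int} and $0<f\in L^1(\Omega)$, Theorem~\ref{th:Exist2}, case (i) with $m=1$, gives a nonnegative solution $u\in BV(\Omega)$ of
\[
-\Delta_1 u + g(u)|Du| = \tilde h(u) f \ \text{ in } \Omega, \qquad u=0 \text{ on } \partial\Omega,
\]
in the sense of Definition~\ref{def:sol}, with some associated field $z$.

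\textbf{Boundedness.} By construction $\tilde h(s)=0$ for every $s\ge s_0$. Proposition~\ref{prop:Bound_h}, applied with $\tilde h$ in place of $h$, therefore yields $u\in L^\infty(\Omega)$ and $\|u\|_{L^\infty(\Omega)}\le s_0$.

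\textbf{Recovering the original problem.} Because $0\le u\le s_0$ a.e., we have $\tilde h(u)=h(u)$ a.e. in $\Omega$. Hence $h(u)f=\tilde h(u)f\in L^1(\Omega)$, and \eqref{eq:def_dist} holds with $h$ in place of $\tilde h$. The remaining requirements of Definition~\ref{def:sol} do not involve $h$: $D^ju=0$, $\Gamma(u)\in BV(\Omega)$, \eqref{eq:def_pair} and \eqref{eq:def_bord} are inherited unchanged, with the same field $z$. Thus $u$ is a solution of \eqref{eq:PbMain} and satisfies $\|u\|_{L^\infty(\Omega)}\le s_0$.

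All the substantive work is already done in Theorem~\ref{th:Exist2} and Proposition~\ref{prop:Bound_h}. The only point to check carefully is that $\tilde h$ meets the standing hypotheses, in particular that it is continuous at $s_0$. This is exactly where $h(s_0)=0$ is used.
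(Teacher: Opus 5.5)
Your proposal is correct and takes the same approach as the paper's proof: truncate $h$ to a continuous $\tilde h$ that vanishes on $[s_0,\infty)$, get a solution from Theorem~\ref{th:Exist2}, bound it by $s_0$ via Proposition~\ref{prop:Bound_h}, and use $\tilde h(u)=h(u)$ to recover the original problem. You also check explicitly that $\tilde h$ is continuous at $s_0$ (via $h(s_0)=0$) and that case (i) applies, details the paper leaves implicit.
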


\begin{proof}
For each $s\geq 0$, we define the continuous function
\[
\tilde h(s):=
\begin{cases}
h(s) & \text{ if } s< s_0,\\
0 & \text{ if } s\geq s_0,
\end{cases}
\]
and we consider the problem
\begin{equation}
	\label{eq:PbZero}
	\begin{cases}
		-\Delta_1 u + g(u)|Du| = \tilde h(u)f & \text{in }\Omega,\\
		u=0 & \text{on } \partial\Omega.
	\end{cases}
\end{equation}
Since the assumptions of Theorem~\ref{th:Exist2} are verified, problem~\eqref{eq:PbZero} has a solution $u\in\bvo$. By Proposition~\ref{prop:Bound_h}, $u$ belongs to $\lio$ and $\|u\|_\lio \leq s_0$. Since $\tilde h(s) = h(s)$ when $s\leq s_0$, $u$ also solves~\eqref{eq:PbMain}.
\end{proof}

\subsection{Bounded solutions for more regular data}

We show that any solution to~\eqref{eq:PbMain} belongs to $\lio$ provided that $f$ is sufficiently integrable. This boundedness result, stated below, has been shown when $g\equiv h\equiv 1$ in~\cite{MaSe, LaSe1}.

\begin{proposition}
Let $0<f\in L^m(\Omega)$ with $m\geq 1$. Assume that $g$ verifies~\eqref{eq:hyp_g_int} and that $h$ satisfies~\eqref{eq:hyp_h_inf}. Then any solution $u\in \bvo$ of~\eqref{eq:PbMain} belongs to $\lio$ if one of the following cases occurs:
\begin{enumerate}
    \item[(i)] $m>N$,
    \item[(ii)] $m=N$ and $\|f\|_\lno < (\mathcal{S}_1 h(\infty))^{-1}$, where $\mathcal{S}_1$ is defined in~\eqref{eq:sob_embed}.
\end{enumerate}
\end{proposition}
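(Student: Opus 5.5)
The plan is to test the equation with $G_k(T_\ell(u))$ and obtain an inequality for the superlevel sets, in the spirit of the case (iii) argument in Lemma~\ref{lem:bound_sol2}. This time the argument is carried out directly on the solution $u$ rather than on the approximations.

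Fix $\ell>k>0$ and take $G_k(T_\ell(u))\in\bvo\cap\lio$ as test function in~\eqref{eq:def_dist}. We then apply the Green identity~\eqref{eq:Green}. The boundary term vanishes because $u=0$ $\mathcal{H}^{N-1}$-a.e. on $\partial\Omega$ and $|[G_k(T_\ell(u))z,\nu]|\le G_k(T_\ell(u))$. We also use Lemma~\ref{lem:Composition} via~\eqref{eq:def_pair}, which gives $(z,DG_k(T_\ell(u)))=|DG_k(T_\ell(u))|$. Dropping the nonnegative gradient term then yields
\[
\into |DG_k(T_\ell(u))| \le \sup_{s>k}h(s)\int_{\{u>k\}} G_k(T_\ell(u)) f .
\]
Write $w=G_k(T_\ell(u))$ and $A_k=\{u>k\}$. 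By Hölder and~\eqref{eq:sob_embed} (using $w=0$ on $\partial\Omega$),
\[
\into |Dw|\le \sup_{s>k}h(s)\,\|f\|_{L^N(A_k)}\,\mathcal{S}_1\into|Dw| .
\]

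In case (ii), pick $k_0$ with $\sup_{s>k_0}h(s)\|f\|_{\lno}\mathcal{S}_1<1$. Then $w\equiv0$, so $u\le k_0$, exactly as in Lemma~\ref{lem:bound_sol2}.

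In case (i) there are two routes. The first is short: since $u\in\bvo\subset L^1(\Omega)$, we have $|A_k|\to0$ as $k\to\infty$. Hence $\|f\|_{L^N(A_k)}\le\|f\|_{L^m(\Omega)}|A_k|^{\frac1N-\frac1m}\to0$. For $k$ large the coefficient $\sup_{s>k}h(s)\|f\|_{L^N(A_k)}\mathcal{S}_1$ is below $1$, and this again forces $w\equiv0$.

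The second route is a Stampacchia-type argument, in case one prefers quantitative bounds. Apply Hölder with exponent $m$ to get $\int|Dw|\le C\|f\|_{L^m}\|w\|_{1^*}|A_k|^{1-\frac1{1^*}-\frac1m}$. Let $\ell\to\infty$ using lower semicontinuity. Then combine this with $(h-k)|A_h|^{1/1^*}\le\|G_k(u)\|_{1^*}$ to obtain a decay inequality $|A_h|\le C(h-k)^{-1^*}|A_k|^{\beta}$ with $\beta>1$, which yields a finite level where $|A_k|=0$.

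The only delicate point is justifying the test function. One must check that $G_k(T_\ell(u))$ is an admissible test, i.e. that $G_k(T_\ell(u))^*\in L^1(\Omega,\operatorname{div}z)$. This holds since it is bounded and $\operatorname{div}z$ is a finite measure with $D^ju=0$. One must also check that $h(u)f\,G_k(T_\ell(u))$ is controlled by $\sup_{s>k}h(s)$, which is finite by~\eqref{eq:hyp_h_inf}. The smallness argument in case (i), via $|A_k|\to0$, is the key step, and it is immediate.
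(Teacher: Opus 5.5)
This is essentially the paper's proof: the same test function $G_k(T_\ell(u))$, Lemma~\ref{lem:Composition}, H\"older--Sobolev on $A_k$, and smallness of $\mathcal{S}_1\sup_{s>k}h(s)\|f\|_{L^N(A_k)}$, obtained from $|A_k|\to 0$ when $m>N$ or from the hypothesis when $m=N$. (Incidentally, your localized norm $\|f\|_{L^N(A_k)}$ tends to $0$ by absolute continuity for every $f\in L^N(\Omega)$, so it would even let you drop the smallness hypothesis in (ii) for this a posteriori bound.) Discard the optional Stampacchia route, though: by $1$-homogeneity $\|G_k(u)\|_{L^{1^*}(\Omega)}$ appears linearly on both sides, so you only get the dichotomy ``$G_k(u)\equiv 0$ or $\mathcal{S}_1 C\|f\|_{L^m(\Omega)}|A_k|^{\frac1N-\frac1m}\geq 1$'' (your first route again), not a decay estimate with $\beta>1$.
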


\begin{remark}
We stress that this is not an existence result, but rather an \emph{a posteriori} estimate that can be combined with Theorems~\ref{th:Exist} and~\ref{th:Exist2}. Indeed, the same proof remains valid if $g\equiv 0$ and $h\equiv 1$, a case where a strong nonexistence phenomenon occurs even for (large) constant data $f$ (see~\cite{MeSeTr}).
\end{remark}

\begin{remark}
The second case can be analogously extended to the setting where $f$ belongs to the Marcinkiewicz space $L^{N,\infty}(\Omega)$. Recall that $L^{N,\infty}(\Omega)$ is strictly larger than $L^N(\Omega)$, but is contained in $L^q(\Omega)$ for any $q\in[1,N)$. In this framework, unbounded solutions may arise if $\|f\|_{L^{N,\infty}(\Omega)}$ is sufficiently large (see~\cite[Section~5]{LaSe1}).
\end{remark}

\begin{proof}
Let $\ell >k>0$. We take $G_k(T_\ell (u)) \in \bvo\cap \lio$ as test function in~\eqref{eq:def_dist}, and we use the Green identity~\eqref{eq:Green} to obtain that
\[
\into (z, DG_k(T_\ell (u))) + \into g(u) G_k(T_\ell (u)) |Du| = \into h(u) G_k(T_\ell (u)) f,
\]
where we have taken into account that $u\lvert_{\partial\Omega}=0$. Using Lemma~\ref{lem:Composition} and dropping the second integral, it follows that
\begin{equation*}
\into |DG_k(T_\ell (u))| \leq \into h(u) G_k(T_\ell (u)) f.
\end{equation*}
Since~\eqref{eq:hyp_h_inf} holds, then $\sup_{s>k} h(s)$ is finite for any $k>0$. As $G_k(s)=0$ when $s\leq k$, from the previous we obtain 
\begin{equation*}
\into |DG_k(T_\ell (u))| \leq \sup_{s>k}h(s)\into G_k(T_\ell (u)) f.
\end{equation*}
Now, for each $k>0$ we define the sets $A_k := \{u > k\}$. Using H\"older's inequality and the Sobolev inequality~\eqref{eq:sob_embed}, and taking into account that $u\lvert_{\partial\Omega} = 0$, we deduce that
\begin{equation}
\label{eq:Pf_Bound_1}
\begin{split}
\into |DG_k(T_\ell (u))| &\leq \sup_{s>k}h(s) \|f\|_{L^m(\Omega)} |A_k|^{\frac{1}{N}-\frac{1}{m}} \|G_k(T_\ell (u))\|_{L^{\frac{N}{N-1}}(\Omega)}\\
&\leq \mathcal{S}_1 \sup_{s>k}h(s) \|f\|_{L^m(\Omega)} |A_k|^{\frac{1}{N}-\frac{1}{m}} \into |DG_k(T_\ell (u))|.
\end{split}
\end{equation}

We claim that there exists some $k_0>0$ such that
\begin{equation}
\label{eq:Pf_Bound_2}
\mathcal{S}_1 \sup_{s>k_0}h(s) \|f\|_{L^m(\Omega)} |A_{k_0}|^{\frac{1}{N}-\frac{1}{m}} < 1.
\end{equation}
Indeed, if $m>N$, the fact that $u\in\luo$ implies that $|A_{k}|^{\frac{1}{N}-\frac{1}{m}} \to 0$ as $k\to\infty$, from which~\eqref{eq:Pf_Bound_2} follows. On the other hand, if $m=N$, the assumption $\|f\|_\lno < (\mathcal{S}_1 h(\infty))^{-1}$ ensures the existence of some $k_0>0$ such that~\eqref{eq:Pf_Bound_2} holds.

Therefore, from~\eqref{eq:Pf_Bound_1} and~\eqref{eq:Pf_Bound_2} we obtain that
\[
\into |DG_{k_0}(T_\ell (u))| = 0,
\]
which implies that $G_{k_0}(T_\ell (u)) \equiv 0$. Since $\ell>k_0$, it follows that $\|u\|_{\lio} \leq k_0$.
\end{proof}

\section{Additional comments}

\subsection{A weaker concept of solution}
\label{sec:Weak_sol}
The 1-Laplacian operator is formally invariant under increasing transformations. Indeed, observe that
\[
\frac{D\Gamma(u)}{|D\Gamma(u)|} = \frac{Du}{|Du|},
\]
provided that $\Gamma'>0$ and $u$ is regular enough. This property allows us to reformulate~\eqref{eq:PbMain} with a ``normalized'' gradient term. 

If $u$ is a solution to~\eqref{eq:PbMain} and one performs the change  of variables $v=\Gamma(u)$ (recall that $\Gamma$ is increasing), then $v$ formally solves
\begin{equation}
	\label{eq:PbCV2}
	\begin{cases}
		-\Delta_1 v + |Dv| = h\left(\Gamma^{-1}(v)\right)f & \text{in }\Omega,\\
		v=0 & \text{on } \partial\Omega.
	\end{cases}
\end{equation}
The next lemma formalizes this observation.

\begin{proposition}
\label{prop:sol_equiv}
Assume that $g$ verifies~\eqref{eq:hyp_g_int}. If $u\in \bvo$ is a solution to~\eqref{eq:PbMain} with associated vector field $z\in \DM(\Omega)$, then $v:=\Gamma(u) \in \bvo$ is a solution to~\eqref{eq:PbCV2} with the same associated vector field $z$.
\end{proposition}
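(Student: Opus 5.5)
We check the requirements of Definition~\ref{def:sol} for \eqref{eq:PbCV2} one at a time. Here the gradient term has coefficient $\tilde g\equiv 1$, so $\tilde\Gamma(s)=s$, and the right-hand side is $\tilde h(s)=h(\Gamma^{-1}(s))$. This $\tilde h$ is continuous, finite outside the origin and bounded at infinity whenever $h$ is, since $\Gamma^{-1}$ is continuous and increasing on the range of $\Gamma$. Note that if $\Gamma$ is bounded, $\tilde h$ only needs to be defined on $[0,\sup\Gamma)$, which contains the values of $v$ a.e.

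\emph{Regularity and jumps.} By definition $v=\Gamma(u)\in\bvo$ and $v\ge 0$. Also $\tilde\Gamma(v)=v\in\bvo$, and $\tilde h(v)f=h(u)f\in\luo$. For the jump part: $\Gamma$ is continuous and strictly increasing, so $u$ and $v$ have the same superlevel sets, $E_{v,\Gamma(t)}=E_{u,t}$. By the coarea formula and \cite[Proposition~3.92]{AFP}, $D^j u=0$ means $\mathcal H^{N-1}(J_u)=0$. The approximate limits transform through the continuous monotone $\Gamma$, so $J_v\subseteq J_u$ up to $\mathcal H^{N-1}$-null sets, and hence $D^j v=0$. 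This can alternatively be read off from Lemma~\ref{lem:ChainRule} together with the fact that $|D\Gamma(u)|=g(u)|Du|$ is diffuse.

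\emph{Equation and boundary condition.} The remark after Definition~\ref{def:sol} (equivalently Lemma~\ref{lem:ChainRule}, using $\Gamma(0)=0$) gives $|Dv|=|D\Gamma(u)|=g(u)|Du|$. Substituting into \eqref{eq:def_dist} gives
\[
-\operatorname{div}z+|Dv|=h(u)f=\tilde h(v)f,
\]
with the same $z$, which still satisfies $\|z\|_{L^\infty(\Omega)^N}\le1$. For the boundary condition, \eqref{eq:def_bord} says $u=0$ $\mathcal H^{N-1}$-a.e. on $\partial\Omega$. The trace of $\Gamma(u)$ is $\Gamma$ applied to the trace of $u$; this holds for Lipschitz $\Gamma$ by standard trace theory. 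Near $0$, $\Gamma$ is only continuous, so I would argue with $\Gamma(T_k(u))$ and approximation. A cleaner option is to note that a continuous monotone $\Gamma$ maps boundary approximate limits to boundary approximate limits, so $v=\Gamma(0)=0$ on $\partial\Omega$.

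\emph{Pairing condition.} This is the main step: we need $(z,DT_k(v))=|DT_k(v)|$ for every $k>0$. Write $T_k(v)=\Psi(u)$ with $\Psi(s)=\min\{\Gamma(s),k\}$, which is continuous and nondecreasing on $[0,\infty)$. Moreover $\Psi(u)\in\bvo\cap\lio$, since it is a truncation of $v\in\bvo$. Lemma~\ref{lem:Composition2} then applies with the hypothesis \eqref{eq:def_pair} for $u$ and gives $(z,D\Psi(u))=|D\Psi(u)|$, which is exactly the identity required.

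The delicate points are therefore only technical: the absence of jumps for $v$ and the trace of $\Gamma(u)$ when $\Gamma$ is merely continuous at the origin. The pairing identity itself is immediate from Lemma~\ref{lem:Composition2}.
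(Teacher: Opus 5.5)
Your proof is correct and follows essentially the same route as the paper. In both, the equation and the boundary condition come from $|D\Gamma(u)|=g(u)|Du|$ and $\Gamma(0)=0$, and the pairing condition comes from a composition lemma. The one difference is in that last step: you apply Lemma~\ref{lem:Composition2} with $\Psi=\min\{\Gamma,k\}$. The paper instead applies Lemma~\ref{lem:Composition} to $T_\ell(u)$ and uses $\Gamma(T_\ell(u))=T_{\Gamma(\ell)}(v)$, which requires $\Gamma(\ell)=k$ rather than $\ell=\Gamma(k)$ as written there. Your version covers every $k>0$ at once, including $k\ge\sup\Gamma$ when $\Gamma$ is bounded, and you also verify $D^jv=0$ explicitly, which the paper leaves implicit.
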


\begin{proof}
We need to prove that $v$ verifies all the requirements of Definition~\ref{def:sol}. First, since~\eqref{eq:hyp_g_int} is assumed, then $\Gamma(0)=0$ and thus $v$ verifies the boundary condition~\eqref{eq:def_bord}. Moreover, it is clear that $v$ satisfies the weak formulation of~\eqref{eq:PbCV2} with the vector field $z$. Finally, as for any $\ell>0$ we have
\[
(z, DT_\ell(u)) = |DT_\ell(u)| \text{ as measures in } \Omega,
\]
an application of Lemma~\ref{lem:Composition} gives that
\[
(z, D\Gamma(T_\ell(u))) = |D\Gamma(T_\ell(u))| \text{ as measures in } \Omega.
\]
By choosing $\ell = \Gamma(k)$ for a given $k>0$, we observe that $\Gamma(T_\ell(u)) = T_k(\Gamma(u)) = T_k(v)$. This confirms that $v$ satisfies the condition~\eqref{eq:def_pair}, thus concluding that $v$ is a solution to~\eqref{eq:PbCV2}.
\end{proof}

One may wonder if the converse to Proposition~\ref{prop:sol_equiv} also holds; that is, whether $u:=\Gamma^{-1}(v)$ is a solution to~\eqref{eq:PbMain} provided that $v\in\bvo$ is a solution to~\eqref{eq:PbCV2}. The answer to this question is negative. The reason is that, in general, one cannot guarantee that $u$ belongs to $\bvo$.

This phenomenon is perfectly illustrated in~\cite[Example~7.4]{LaSe1}. In this example, the authors consider $N>1$, $\Omega=B_1(0)$, $h\equiv 1$, $g(s)=\frac{1}{1+s}$ and $f(x)=\frac{\lambda}{|x|}$ with $\lambda>2(N-1)$, and they show that 
\[
v(x)=(N-1-\lambda) \log|x|\in\bvo
\]
solves~\eqref{eq:PbCV2} with associated vector field $z(x)=-\frac{x}{|x|}$, yet $u:=\Gamma^{-1}(v)$ does not belong to $\bvo$.

The above considerations lead to the following weaker concept of solution.

\begin{defin}
Let $0<f\in\luo$. We say that a nonnegative function $u$ is a \textit{weak solution} to problem~\eqref{eq:PbMain} if $u(x)<\infty$ $\ae$ in $\Omega$ and $v:= \Gamma(u)$ solves (in the sense of Definition~\ref{def:sol}) problem~\eqref{eq:PbCV2}.
\end{defin}

As shown in Proposition~\ref{prop:sol_equiv}, any solution to~\eqref{eq:PbMain} is also a weak solution. Conversely, by reasoning analogously to Proposition~\ref{prop:sol_equiv}, it can be proved that the reciprocal also holds if the weak solution belongs to $\bvo$.

\begin{proposition}
Assume that $g$ verifies~\eqref{eq:hyp_g_int}. If $u$ is a weak solution to~\eqref{eq:PbMain} and $u\in \bvo$, then $u$ is a solution to~\eqref{eq:PbMain}.
\end{proposition}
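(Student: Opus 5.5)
We check the requirements of Definition~\ref{def:sol} for $u$ one at a time, using that $v:=\Gamma(u)\in\bvo$ solves~\eqref{eq:PbCV2} with some field $z$, $\|z\|_{L^\infty(\Omega)^N}\le 1$. By assumption $u\ge 0$ and $u\in\bvo$. Moreover $\Gamma(u)=v\in\bvo$, and $h(u)f=h(\Gamma^{-1}(v))f\in\luo$ because this is part of the solution concept for~\eqref{eq:PbCV2}.

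\textbf{No jump part.} Since $v$ solves~\eqref{eq:PbCV2}, $D^j v=0$. We have $u=\Gamma^{-1}(v)$ with $\Gamma^{-1}$ continuous and increasing. Approximate limits are preserved by continuous maps, so a point of approximate continuity of $v$ is one of $u$, and $S_u\subseteq S_v$. Since $D^jv=0$, $\mathcal H^{N-1}(J_v)=0$, and hence $\mathcal H^{N-1}(S_v)=0$. Therefore $\mathcal H^{N-1}(J_u)=0$, and as $u\in\bvo$ this gives $D^ju=0$.

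\textbf{Boundary condition.} The trace of $v$ vanishes $\mathcal H^{N-1}$-a.e.\ on $\partial\Omega$, and $\Gamma(0)=0$ with $\Gamma$ increasing and continuous. We use that the trace of a composition is the composition of traces, which can be justified through averages on half-balls: $u=\Gamma^{-1}(v)$ has Lebesgue boundary values $\Gamma^{-1}(0)=0$ at every point where $v$ does. Hence $u=0$ $\mathcal H^{N-1}$-a.e.\ on $\partial\Omega$.

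\textbf{Equation.} Lemma~\ref{lem:ChainRule} applies, since $D^ju=0$ and $\Gamma(u)\in\bvo$ with $\Gamma(0)=0$ finite. It gives $|Dv|=|D\Gamma(u)|=g(u)|Du|$ as measures. Substituting into $-\operatorname{div}z+|Dv|=h(\Gamma^{-1}(v))f$ yields exactly~\eqref{eq:def_dist} with the same $z$.

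\textbf{Pairing condition (main step).} Fix $k>0$ and set $\ell=\Gamma(k)$, so that $T_k(u)=\Gamma^{-1}(T_\ell(v))$. We know $(z,DT_\ell(v))=|DT_\ell(v)|$, where $T_\ell(v)\in\bvo\cap\lio$ takes values in $[0,\ell]$. The map $\Psi:=\Gamma^{-1}$ restricted to $[0,\ell]$ is continuous and nondecreasing (indeed increasing), and $\Psi(T_\ell(v))=T_k(u)\in\bvo\cap\lio$. Lemma~\ref{lem:Composition} then gives $(z,DT_k(u))=|DT_k(u)|$. The only delicate points are two: membership of $T_k(u)$ in $BV$, which holds since $u\in\bvo$; and the fact that $\Gamma^{-1}$ may fail to be Lipschitz near $0$ when $g$ is singular there. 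The latter is why we invoke the general continuous-composition version, Lemma~\ref{lem:Composition}, rather than a Lipschitz chain rule. This completes the verification.
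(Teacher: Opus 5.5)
\textbf{Overall.} Your plan is the right one, and it is what the paper intends: the paper only says the converse follows ``by reasoning analogously to Proposition~\ref{prop:sol_equiv}''. The equation step via Lemma~\ref{lem:ChainRule} is correct. So is the pairing step via Lemma~\ref{lem:Composition}, with $\ell=\Gamma(k)$ and $T_k(u)=\Gamma^{-1}(T_\ell(v))$.

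\textbf{The gap.} The justifications of $D^ju=0$ and of the boundary condition rely on a false claim. For the $L^1$-mean notion of approximate limit used in the paper, and for Lebesgue boundary values, continuous maps do \emph{not} preserve approximate limits. Only approximate continuity in the density sense passes to $\Psi(v)$. If $\Psi$ is unbounded and superlinear, $\Psi(v)$ can be very large on sets of vanishing density whose contribution to the averages does not vanish. Now $\Gamma^{-1}$ grows superlinearly whenever $g$ decays at infinity (e.g.\ $g(s)\sim 1/\log s$). Hence $S_u\subseteq S_v$ need not hold pointwise, and the same goes for your half-ball argument at boundary points. The conclusions are still true $\mathcal{H}^{N-1}$-a.e., but they need a different argument.

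\textbf{The repair.} It is short, and it uses the opposite of your closing remark. Since $g$ is continuous on $[0,\infty)$ with values in $(0,\infty]$, we have $m_k:=\min_{[0,k]}g>0$. So $\Gamma^{-1}$ is Lipschitz on $[0,\Gamma(k)]$ with constant $1/m_k$. When $g(0)=\infty$, in fact $(\Gamma^{-1})'(t)=1/g(\Gamma^{-1}(t))\to0$ as $t\to0^+$. So it is $\Gamma$, not $\Gamma^{-1}$, that fails to be Lipschitz near $0$; the only loss of Lipschitz continuity of $\Gamma^{-1}$ is at infinity. Consequently $\Phi_k(t):=\Gamma^{-1}(\min\{\max\{t,0\},\Gamma(k)\})$ is Lipschitz on $\R$, and $T_k(u)=\Phi_k(v)$.
\begin{itemize}
\item \emph{No jump part.} Lemma~\ref{lem:ChainRule_AFP} shows that the jump part of $DT_k(u)$ is concentrated on $J_v$, which is $\mathcal{H}^{N-1}$-null because $D^jv=0$. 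Hence $D^jT_k(u)=0$ for every $k$. Every jump point of $u$ with values $u^-<u^+$ is a jump point of $T_k(u)$ for $k>u^+$, so $D^ju=0$. Alternatively, argue directly that $J_u\subseteq S_v$: at a jump point of $u$, the one-sided density limits of $v=\Gamma(u)$ are $\Gamma(u^+)\neq\Gamma(u^-)$.
\item \emph{Boundary condition.} Traces commute with Lipschitz compositions, so $T_k(u\lvert_{\partial\Omega})=T_k(u)\lvert_{\partial\Omega}=\Phi_k(v\lvert_{\partial\Omega})=\Phi_k(0)=0$. Hence $u\lvert_{\partial\Omega}=0$ $\mathcal{H}^{N-1}$-a.e.
\end{itemize}
With these fixes the rest of your proof stands. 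Using Lemma~\ref{lem:Composition} in the pairing step is harmless, although on each $[0,\Gamma(k)]$ its Lipschitz version would already suffice.
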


Observe that problem~\eqref{eq:PbCV2} presents a structural advantage with respect to~\eqref{eq:PbMain}: the gradient term is multiplied by 1, and, clearly, this coefficient does not degenerate at infinity. Hence, Theorem~\ref{th:Exist} can be applied to ensure the existence of a solution $v\in\bvo$ to~\eqref{eq:PbCV2}. Consequently, if $\Gamma^{-1}$ is defined in $[0,\infty)$, then $u:=\Gamma^{-1}(v)$ is finite $\ae$, and thus $u$ is a weak solution to~\eqref{eq:PbMain}. All of these considerations are summarized in the following existence theorem.

\begin{theorem}
\label{th:Exist3}
Let $0<f\in L^1(\Omega)$. Assume that $g$ verifies~\eqref{eq:hyp_g_int} and that $h$ satisfies~\eqref{eq:hyp_h_inf}. If $g\notin L^1([1,\infty))$, then problem~\eqref{eq:PbMain} has a weak solution $u$.
\end{theorem}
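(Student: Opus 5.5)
The plan is to reduce Theorem~\ref{th:Exist3} to Theorem~\ref{th:Exist}, applied to the transformed problem~\eqref{eq:PbCV2}. Set $\tilde g\equiv 1$ and $\tilde h(s):=h(\Gamma^{-1}(s))$ for $s\geq 0$. Since $g>0$ is continuous and integrable near zero, $\Gamma$ is continuous and increasing on $[0,\infty)$ with $\Gamma(0)=0$. The hypothesis $g\notin L^1([1,\infty))$ gives $\Gamma(s)\to\infty$ as $s\to\infty$. Therefore $\Gamma\colon[0,\infty)\to[0,\infty)$ is a homeomorphism, and $\Gamma^{-1}$ is continuous, increasing, defined on all of $[0,\infty)$, and tends to infinity at infinity.

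First I check the hypotheses of Theorem~\ref{th:Exist} for the pair $(\tilde g,\tilde h)$.
\begin{itemize}
\item[--] $\tilde g\equiv1$ is positive, integrable near zero, and has $\liminf_{s\to\infty}\tilde g(s)=1>0$. So~\eqref{eq:hyp_g_int} and~\eqref{eq:hyp_g_inf} hold.
\item[--] $\tilde h=h\circ\Gamma^{-1}$ is continuous as a composition, with values in $[0,\infty]$. It is finite on $(0,\infty)$ because $\Gamma^{-1}$ maps $(0,\infty)$ into $(0,\infty)$, where $h$ is finite.
\item[--] Since $\Gamma^{-1}(s)\to\infty$, we get $\limsup_{s\to\infty}\tilde h(s)=\limsup_{t\to\infty}h(t)=h(\infty)<\infty$, which is~\eqref{eq:hyp_h_inf}.
\end{itemize}
Here $\Gamma^{-1}$ being defined on all of $[0,\infty)$ is used essentially: if $g$ were integrable at infinity, $\tilde h$ would only be defined on a bounded interval. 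Theorem~\ref{th:Exist} then yields a nonnegative solution $v\in\bvo$ to~\eqref{eq:PbCV2} in the sense of Definition~\ref{def:sol}. The primitive in that definition is $\tilde\Gamma(s)=s$, so the condition $\tilde\Gamma(v)\in\bvo$ is automatic.

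Next I define $u:=\Gamma^{-1}(v)$. Since $v\in\luo$, it is finite a.e., and $\Gamma^{-1}$ is finite-valued on $[0,\infty)$, so $u<\infty$ a.e. in $\Omega$. Also $u\geq0$, and $\Gamma(u)=v$ solves~\eqref{eq:PbCV2} with $h(\Gamma^{-1}(v))=h(u)$. By definition, $u$ is therefore a weak solution to~\eqref{eq:PbMain}.

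The only step needing real care is verifying that $\tilde h$ is an admissible nonlinearity, namely its continuity at the origin in the extended sense and its boundedness at infinity. Both follow from $\Gamma^{-1}$ being a homeomorphism of $[0,\infty)$, which is exactly where $g\notin L^1([1,\infty))$ enters. The remainder is a direct application of Theorem~\ref{th:Exist}.
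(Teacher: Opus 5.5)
Your proposal is correct and matches the paper's argument: apply Theorem~\ref{th:Exist} to the normalized problem~\eqref{eq:PbCV2}, with gradient coefficient $1$ and nonlinearity $h\circ\Gamma^{-1}$, then set $u:=\Gamma^{-1}(v)$, using $g\notin L^1([1,\infty))$ exactly to make $\Gamma^{-1}$ defined on all of $[0,\infty)$. Your explicit check that $h\circ\Gamma^{-1}$ is continuous, finite away from zero, and satisfies~\eqref{eq:hyp_h_inf} supplies details the paper leaves implicit.
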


The assumption $g\notin L^1([1,\infty))$ is made to ensure that $\Gamma^{-1}$ is defined on $[0,\infty)$. This result should be compared with Theorem~\ref{th:Exist}: note that the assumptions of Theorem~\ref{th:Exist3} are more relaxed than those of Theorem~\ref{th:Exist}. We also point out that a similar result for the case $h\equiv 1$ and $f\in L^{N,\infty}(\Omega)$ has been established in~\cite[Theorem~7.3]{LaSe1}.

Finally, we underline that if $g\in L^1([1,\infty))$, problem~\eqref{eq:PbMain} may not have a weak solution. The heuristic reason is that, in that case, the domain of $\Gamma^{-1}$ is $[0, L)$, where $L:=\lim_{s\to\infty} \Gamma(s)<\infty$. As a consequence, the range of a solution $v$ to~\eqref{eq:PbCV2} may exceed this interval, making it impossible to define $u:=\Gamma^{-1}(v)$. A further discussion on this phenomenon can be found in~\cite[Section~8.1]{LaSe1}.

\subsection{Extensions}
\label{sec:Extensions}

For the sake of exposition, we have assumed conditions on $g$, $h$ and $f$ that are more restrictive than actually necessary. In this final section, we outline some possible generalizations of the results established throughout the article.

Firstly, all the results and proofs presented in this work remain valid if the condition $g>0$ is relaxed to $g\geq 0$, provided that $\Gamma$ is increasing. This allows $g$ to vanish at isolated points.

Secondly, if $h$ is nonsingular (i.e. $h(0)<\infty$), the concept of solution (Definition~\ref{def:sol}) remains unchanged when $f$ is merely nonnegative. In this case, all the results remain valid, with the only difference arising at the end of the proof of the comparison principle (Theorem~\ref{th:Comp}). In that proof, the condition $f_2>0$ plays a key role. If $f_2\geq 0$, one can instead follow from~\eqref{eq:Pf_Comp_10} the arguments of~\cite[Theorem~3.5]{LaSe2} to conclude the proof.

The case where $h$ is singular (i.e. $h(0)=\infty$) and $f$ is nonnegative is more delicate. As was first pointed out in~\cite{DecGiSe}, the appropriate concept of solution (where the solutions to the $p$-Laplacian problems converge as $p\to 1^+$) requires the characteristic function $\chi_{\{u>0\}}$ in its formulation. These solutions also satisfy a variational formulation that is reminiscent of the concept of renormalized solutions.

The mentioned notion of solution is formulated as follows (see~\cite{BalOP}).

\begin{defin}
\label{def:sol_nonneg}
Let $0\leq f\in\luo$ and assume that $h(0)=\infty$. A nonnegative function $u\in BV(\Omega)$ is a \textit{solution} to problem~\eqref{eq:PbMain} if $D^j u=0$, $g(u)\in L^1(\Omega,|Du|)$, $h(u)f \in L^1(\Omega)$, $\chi_{\{u>0\}}\in BV_{\rm loc}(\Omega)$, and if there exists $z\in \DM_{\rm loc}(\Omega)$ with $\|z\|_{L^\infty(\Omega)^N}\leq 1$ such that
\begin{gather} 
    \label{eq:NN_dist}
    -\chi_{\{u>0\}}^* \operatorname{div}z + g(u)|Du| = h(u)f \text{ as measures in } \Omega,\\
    \label{eq:NN_pair}
    (z,DT_k(u))=|DT_k(u)| \text{ as measures in } \Omega \text{ for any } k>0,\\
    \label{eq:NN_bound}
    u(x)=0 \text{ for  $\mathcal{H}^{N-1}$-a.e. } x \in \partial\Omega.		
\end{gather}
\end{defin}

\begin{remark}
It must be specified that $h(0)f(x)$ is defined as 0 in the set where $f(x)=0$. In contrast, we understand that $h(0)f(x)=\infty$ when $f(x)>0$.
\end{remark}

Observe that the above definition coincides with the concept of solution introduced in Definition~\ref{def:sol} when $f>0$. Indeed, in this case, any solution in the sense of Definition~\ref{def:sol_nonneg} must be strictly positive, as otherwise the product $h(u)f$ cannot belong to $\luo$. Thus, under the condition $f>0$, the Definition~\ref{def:sol} is recovered.

In this setting, an existence result analogous to Theorems~\ref{th:Exist} and~\ref{th:Exist2} can be shown. Although the fundamental ideas remain the same, several technical difficulties arise. The proof is as follows.

\begin{theorem}
Let $0\leq f\in L^1(\Omega)$ and suppose that $h(0)=\infty$. Under the assumptions of Theorem~\ref{th:Exist} or Theorem~\ref{th:Exist2} (excluding the requirement $f>0$), problem~\eqref{eq:PbMain} has a nonnegative solution $u\in BV(\Omega)$ in the sense of Definition~\ref{def:sol_nonneg}.
\end{theorem}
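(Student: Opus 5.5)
We will rerun the scheme of Section~\ref{sec:Pf_exist} and change only the steps where $f>0$ was used. We keep the approximate problems~\eqref{eq:PbApprox} with $f\geq 0$; \cite[Theorem~4.4]{BalOP} still gives solutions $u_n$ with fields $z_n$, since $h_n$ is bounded. None of Lemmas~\ref{lem:bound_hf}, \ref{lem:bound_grad}, \ref{lem:bound_sol} and~\ref{lem:bound_sol2} uses positivity of $f$. Hence, along a subsequence, $u_n\to u$ in $\luo$ and a.e., $\Gamma_n(u_n)\to\Gamma(u)$ with $\Gamma(u)\in\bvo$ (so $g(u)\in L^1(\Omega,|Du|)$ once $D^ju=0$), $z_n\rightharpoonup z$ *-weakly, and $h(u)f\in\luo$ by Fatou. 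Since $h(0)=\infty$, the last fact gives $f=0$ a.e. on $\{u=0\}$, consistent with the convention in the Remark. Step~2 also goes through unchanged: we get $-\operatorname{div}(e^{-\Gamma(u)}z)\geq h(u)e^{-\Gamma(u)}f$, and in particular $e^{-\Gamma(u)}z\in\DM_{\rm loc}(\Omega)$.

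The real change is in Step~3, which must now produce the inequality with the characteristic function. We test~\eqref{eq:app_weak_form} with $S_\delta(u_n)\varphi$ and pass to the limit, first as $n\to\infty$ and then as $\delta\to0^+$, exactly as before. This gives~\eqref{eq:prov9c}. The lower semicontinuity in the first term shows $\chi_{\{u>0\}}\in BV_{\rm loc}(\Omega)$. We want to read~\eqref{eq:prov9c} as
\[
-\chi_{\{u>0\}}^*\operatorname{div}z+|D\Gamma(u)|\leq h(u)f .
\]
This requires first that $z\in\DM_{\rm loc}(\Omega)$, so that the pairing $(z,D\chi_{\{u>0\}})$ makes sense, and then the inequality $|D\chi_{\{u>0\}}|\geq(z,D\chi_{\{u>0\}})$, which follows from~\eqref{eq:AbsCont}. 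To get $z\in\DM_{\rm loc}(\Omega)$, we write $z=e^{\Gamma(u)}\bigl(e^{-\Gamma(u)}z\bigr)$ locally, or we use the test $T_k(u_n)$-type identities to control $\operatorname{div}z_n$ on compacts. Alternatively, following the proof of~\cite[Theorem~5.3]{BalOP}, we test with $(1-S_\delta(u_n))\varphi$ and bound $|\operatorname{div}z_n|$ on compact sets uniformly in $n$. I expect this local-divergence control to be the main obstacle: $f$ may vanish on large sets, so $u$ may vanish on sets of positive measure, and the argument of Step~3 that removes $\chi_{\{u>0\}}$ is no longer available.

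Next, Step~4 is carried out with Remark~\ref{rem:no_jump} in place of Lemma~\ref{lem:no_jump}. We multiply the chain~\eqref{eq:prov11} by $\chi_{\{u>0\}}^*$; on $\{u>0\}$ the factor $e^{-\Gamma(u)}$ is the same as before, so all inequalities again collapse to equalities. This yields $\chi^*_{\{u>0\}}\bigl(z,D(-e^{-\Gamma(u)})^\#\bigr)=\chi^*_{\{u>0\}}|D(-e^{-\Gamma(u)})|$, and hence $D^j\Gamma(u)=0$ and $D^ju=0$. For the identification~\eqref{eq:NN_pair}, note that $Du$ does not charge $\{u=0\}$, since it is diffuse. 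Then Step~5, localised to $\{u>0\}$, gives $(z,DT_k(u))=|DT_k(u)|$. Step~6 gives equality in the inequality displayed above, because $e^{-\Gamma(u)}>0$ there; this is~\eqref{eq:NN_dist} with $|D\Gamma(u)|=g(u)|Du|$ by Lemma~\ref{lem:ChainRule}.

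Finally, Step~7 works with test functions $G_\delta(T_{1+\delta}(u))$, which vanish on $\{u\leq\delta\}$. The characteristic function therefore disappears from the identity: $\chi^*_{\{u>0\}}=1$ on the support of $G_\delta(T_{1+\delta}(u))^*$, up to the null set where its precise representative is ambiguous, and this set is harmless because $u$ has no jumps. The same boundary estimate then yields $u=0$ $\mathcal H^{N-1}$-a.e. on $\partial\Omega$. Here we need the Green formula~\eqref{eq:Green} for $z\in\DM_{\rm loc}(\Omega)$, which is exactly the version quoted from~\cite{DecGiSe}.
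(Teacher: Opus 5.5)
You have a genuine gap, and it is exactly where you expected one: you never prove $z\in\DM_{\rm loc}(\Omega)$. Everything after \eqref{eq:prov9c} depends on it: the pairing $(z,D\chi_{\{u>0\}})$, the measure $\chi_{\{u>0\}}^*\operatorname{div}z$, the chain of inequalities in Step~4, and the Green formula in Step~7. As you note, \eqref{eq:prov10} is no longer available, so the argument used before to get this fact is gone. Your first suggested route does not work as stated. For $L^1$ data, $u$ need not be locally bounded, so $e^{\Gamma(u)}$ is not known to lie in $BV_{\rm loc}(\Omega)\cap L^\infty_{\rm loc}(\Omega)$. The Leibniz rule therefore cannot recover $\operatorname{div}z$ from $\operatorname{div}\bigl(e^{-\Gamma(u)}z\bigr)$. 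Your other two routes are only named, not carried out.

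The missing idea is elementary: use the approximate equations directly, before any limit inequality. For $\varphi\in C^1_c(\Omega)$, the distributional formulation of the approximate problems gives
\[
\into z_n\cdot\nabla\varphi=\into h_n(u_n)f\varphi-\into\varphi\,|D\Gamma_n(u_n)|.
\]
By Lemma~\ref{lem:bound_hf} and Lemma~\ref{lem:bound_grad}, neither of which uses $f>0$, this yields $\bigl|\into z_n\cdot\nabla\varphi\bigr|\le C\|\varphi\|_{\lio}$ with $C$ independent of $n$ and $\varphi$. Passing to the limit with $z_n\rightharpoonup z$ *-weakly gives $\bigl|\into z\cdot\nabla\varphi\bigr|\le C\|\varphi\|_{\lio}$. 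Hence $\operatorname{div}z$ is a Radon measure, in fact of finite total variation. This is Step~1 of the paper's proof, and with it the rest of your plan goes through as in the paper.

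The same global bound also makes automatic the hypothesis $G_\delta(T_{1+\delta}(u))^*\in L^1(\Omega,\operatorname{div}z)$ required by \eqref{eq:Green} in Step~7, which you did not check. The paper obtains it instead by testing \eqref{eq:NN_dist}, using that $\chi_{\{u>0\}}^*=1$ where $G_\delta(T_{1+\delta}(u))\neq 0$. Keeping $f$ in the approximate problems, instead of the paper's $f_n=\max\{1/n,f\}$, is harmless because $h_n$ is bounded.
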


\begin{proof}
We set $f_n(x) = \max\left\{\frac{1}{n}, f(x)\right\}$ for all $x\in\Omega$, and we define $g_n$ and $h_n$ as in Section~\ref{sec:Approx}. We consider the approximated problems
\begin{equation}
	\label{eq:PbApprox_NN}
	\begin{cases}
		-\Delta_1 u_n + g_n(u_n)|Du_n| = h_n(u_n)f_n & \text{in }\Omega,\\
		u_n=0 & \text{on } \partial\Omega.
	\end{cases}
\end{equation}
Since $f_n$ is strictly positive in $\Omega$, the existence of a solution $u_n\in \bvo$ to~\eqref{eq:PbApprox} (in the sense of Definition~\ref{def:sol}) is guaranteed by Theorem~\ref{th:Exist}. We denote by $z_n\in\DM(\Omega)$ the vector field associated to $u_n$.

All the estimates of Section~\ref{sec:Approx} can be adapted in a simple manner to this setting. Therefore, one finds that both sequences $u_n$ and $\Gamma_n(u_n)$ are bounded in $\bvo$, and that the sequence $h_n(u_n)f_n$ is bounded in $\luo$. As a result, there exists $u\in\bvo$ and a subsequence of $u_n$ (not relabelled) such that
\[
u_n\to u \text{ in } \luo.
\]
Since $\Gamma_n(u_n)$ is bounded in $\bvo$, it follows that $\Gamma(u)\in\bvo$ and $\Gamma_n(u_n) \to \Gamma(u)$ in $\luo$. Moreover, as $h_n(u_n)f_n$ is bounded in $\luo$, the Fatou Lemma gives that $h(u)f\in\luo$. Consequently, the inclusion $\{u=0\}\subseteq \{f=0\}$ holds up to a set of measure zero, which ensures that
\begin{equation}
\label{eq:Pf_NN_1b}
\int_{\{u>0\}} h(u)f  = \into h(u)f.
\end{equation}

Next, we follow the same steps as in the proof of Theorems~\ref{th:Exist} and~\ref{th:Exist2}, highlighting only the real differences.
\medskip

\textbf{Step 1.} Since $\|z_n\|_{L^\infty(\Omega)^N}\leq 1$, one finds that, up to a subsequence, $z_n\rightharpoonup z$ *-weakly in $L^\infty(\Omega)^N$ for some $z\in {L^\infty(\Omega)^N}$ with $\|z\|_{L^\infty(\Omega)^N}\leq 1$. Furthermore, $z$ belongs to $\DM_{\rm loc}(\Omega)$. Indeed, given $\varphi \in C^1_c(\Omega)$, from the distributional formulation of~\eqref{eq:PbApprox_NN} we deduce the existence of some $C>0$ such that
\[
\left| \into z_n \cdot \nabla \varphi \right| \leq \left|\into \varphi |D\Gamma_n(u_n) |\right| + \left|\into h_n(u_n) f_n \varphi \right| \leq C \|\varphi \|_\lio,
\]
where we have used that $\Gamma_n(u_n)$ is bounded in $\bvo$ and $h_n(u_n)f_n$ is bounded in $\luo$. Passing to the limit using the weak-* convergence of $z_n$, we conclude that $\operatorname{div}z$ is a locally finite Radon measure.

\medskip

\textbf{Step 2.} Arguing as in the proof of Theorems~\ref{th:Exist} and~\ref{th:Exist2}, one shows that
\begin{equation}
\label{eq:Pf_NN_1}
-\operatorname{div} \left( e^{-\Gamma(u)}z \right) \geq h(u) e^{-\Gamma(u)} f \text{ in } \mathcal{D}'(\Omega).
\end{equation}
Note that from~\eqref{eq:Pf_NN_1}, it follows that $e^{-\Gamma(u)}z\in\DM_{\rm loc}(\Omega)$. 
\medskip

\textbf{Step 3.} In this setting, we aim to show that
\begin{equation}
\label{eq:Pf_NN_1a}
-\chi_{\{u>0\}}^* \operatorname{div} z + |D\Gamma(u)| \leq h(u)f \text{ in } \mathcal{D}'(\Omega).
\end{equation}
Let $0\leq \varphi\in C_c^1(\Omega)$. Reasoning as in Step 3 (case $h(0)=\infty$) of the proof of Theorems~\ref{th:Exist} and~\ref{th:Exist2}, one obtains (see~\eqref{eq:prov9c}) that
\begin{equation}
\label{eq:Pf_NN_2}
\into \varphi |D\chi_{\{u>0\}}| + \into \chi_{\{u>0\}} z \cdot \nabla \varphi + \into \varphi |D\Gamma (u)| \leq \int_{\{u>0\}} h(u)f \varphi.
\end{equation}
First, observe that from~\eqref{eq:Pf_NN_2} one deduces that $\chi_{\{u>0\}} \in BV_{\rm loc}(\Omega)$ and $\chi_{\{u>0\}}z\in\DM_{\rm loc}(\Omega)$. Therefore, the pairing $(z,D\chi_{\{u>0\}})$ is well-defined. Applying~\eqref{eq:AbsCont} in~\eqref{eq:Pf_NN_2} and taking~\eqref{eq:Pf_NN_1b} into account, it follows that
\begin{equation}
\label{eq:Pf_NN_3}
\into \varphi (z,D\chi_{\{u>0\}}) + \into \chi_{\{u>0\}} z \cdot \nabla \varphi + \into \varphi |D\Gamma (u)| \leq \into h(u)f \varphi.
\end{equation}
Using the definition of the pairing (see~\eqref{eq:Pairing}), the inequality~\eqref{eq:Pf_NN_3} becomes
\begin{equation*}
-\into \varphi \chi_{\{u>0\}}^* \operatorname{div}z + \into \varphi |D\Gamma (u)| \leq \into h(u)f \varphi
\end{equation*}
and then~\eqref{eq:Pf_NN_1a} follows.
\medskip

\textbf{Step 4.} We follow the arguments presented in~\cite{BalOP}. Using the same notation as in the proof of Theorems~\ref{th:Exist} and~\ref{th:Exist2}, one deduces that, as measures in $\Omega$, it holds
\begin{equation}
\label{eq:Pf_NN_4}
\begin{split}
    \left(e^{-\Gamma(u)} \right)^\# \chi_{\{u>0\}}^* |D\Gamma(u)| & \overset{\eqref{eq:Pf_NN_1a}}{\leq} \left(e^{-\Gamma(u)} \right)^\# \chi_{\{u>0\}}^* \operatorname{div}z + e^{-\Gamma(u)} h(u)f \chi_{\{u>0\}}\\
    &\overset{\eqref{eq:Pf_NN_1}}{\leq} \left(e^{-\Gamma(u)} \right)^\# \chi_{\{u>0\}}^* \operatorname{div}z - \chi_{\{u>0\}}^* \operatorname{div} \left( e^{-\Gamma(u)}z \right)\\
    & \overset{\eqref{eq:new_Pairing}}{=} \chi_{\{u>0\}}^* \left(z, D\left( -e^{-\Gamma(u)} \right)^\# \right) \leq \chi_{\{u>0\}}^* \left|D\left( -e^{-\Gamma(u)} \right) \right| \overset{\eqref{eq:chain_rule_2}}{=} \chi_{\{u>0\}}^* \left(e^{-\Gamma(u)} \right)^\# |D\Gamma(u)|.
\end{split}
\end{equation}
Therefore, all the inequalities in~\eqref{eq:Pf_NN_4} are equalities. In particular, we have that
\begin{equation}
\label{eq:Pf_NN_5}
    \chi_{\{u>0\}}^* \left(z, D\left( -e^{-\Gamma(u)} \right)^\# \right) = \chi_{\{u>0\}}^* \left|D \left( -e^{-\Gamma(u)} \right) \right| \text{ as measures in } \Omega.
\end{equation}
We can then use Lemma~\ref{lem:no_jump} (see Remark~\ref{rem:no_jump}) in~\eqref{eq:Pf_NN_1a} with $\beta(s)=-e^{-s}$ and $v=\Gamma(u)$ to deduce that $D^j \Gamma(u)=0$. Since $\Gamma$ is increasing, we conclude that $D^j u=0$.
\medskip

\textbf{Step 5.} Since $u$ has no jump part, then $e^{-\Gamma(u)}$ also has no jump part. Therefore, the identity $\left(- e^{-\Gamma(u)} \right)^\# = -e^{-\Gamma(u)}$ holds $\mathcal{H}^{N-1}$-$\ae$ in $\Omega$.

Moreover, $e^{-\Gamma(u)}\in \bvo$ since $\Gamma(u)\in \bvo$. Noting that $\{u=0\}$ coincides with the set $\left\{e^{-\Gamma(u)} = 1\right\}$, it follows from~\cite[Proposition~3.92]{AFP} that $De^{-\Gamma(u)} \resmes \{u=0\} = 0$. By~\eqref{eq:AbsCont}, we also have that $\left(z, D\left(-e^{-\Gamma(u)}\right) \right)$ vanishes on the set $\{u=0\}$. Hence,~\eqref{eq:Pf_NN_5} can be rewritten as
\begin{equation*}
    \left(z, D\left( -e^{-\Gamma(u)} \right) \right) = \left|D \left( -e^{-\Gamma(u)} \right) \right| \text{ as measures in } \Omega.
\end{equation*}
From this point, one can follow the same procedure as in the proof of Theorems~\ref{th:Exist} and~\ref{th:Exist2} to show that~\eqref{eq:NN_pair} holds.
\medskip

\textbf{Step 6.} From~\eqref{eq:Pf_NN_5}, we deduce that
\begin{equation}
\label{eq:Pf_NN_6}
    e^{-\Gamma(u)} (-\chi_{\{u>0\}}^* \operatorname{div} z + \chi_{\{u>0\}} |D\Gamma(u)| - h(u)f \chi_{\{u>0\}}) = 0 \text{ as measures in } \Omega.
\end{equation}
Since $\{u=0\} = \{\Gamma(u)=0\}$ and $\Gamma(u)\in\bvo$ has no jump part,~\cite[Proposition~3.92]{AFP} implies that $|D\Gamma(u)|$ vanishes on the set $\{u=0\}$. Taking~\eqref{eq:Pf_NN_1b} into account, we can rewrite~\eqref{eq:Pf_NN_6} as
\begin{equation*}
    e^{-\Gamma(u)} (-\chi_{\{u>0\}}^* \operatorname{div} z + |D\Gamma(u)| - h(u)f) = 0 \text{ as measures in } \Omega.
\end{equation*}
Given that the inequality~\eqref{eq:Pf_NN_1a} holds and $e^{-\Gamma(u)}>0$ $\mathcal{H}^{N-1}$-$\ae$ in $\Omega$, the above identity implies that~\eqref{eq:NN_dist} holds.
\medskip

\textbf{Step 7.} 
The proof of the boundary condition~\eqref{eq:NN_bound} proceeds as in the proof of Theorems~\ref{th:Exist} and~\ref{th:Exist2}. However, since in the current setting we only have $z\in \DM_{\rm loc}(\Omega)$, to apply the Green identity~\eqref{eq:Green} in~\eqref{eq:prov16} it is necessary to verify that $G_\delta(T_{1+\delta}(u))\in L^1(\Omega, \operatorname{div}z)$. This follows immediately by choosing $G_\delta(T_{1+\delta}(u))$ as test function in~\eqref{eq:NN_dist} and noting that $G_\delta(T_{1+\delta}(u)) \chi_{\{u>0\}}^* = G_\delta(T_{1+\delta}(u))$ holds $\mathcal{H}^{N-1}$-$\ae$ in $\Omega$.
\end{proof}

Finally, we stress that the boundedness results established in Section~\ref{sec:Bound} also hold within the framework of Definition~\ref{def:sol_nonneg} ($f\geq 0$ and $h(0)=\infty$) with identical proofs. In contrast, the uniqueness of solution is not expected in this setting. Indeed, when $g\equiv 0$, this non-uniqueness phenomenon has been illustrated through several examples in~\cite[Section~6]{DecGiSe}.

\addtocontents{toc}{\protect\setcounter{tocdepth}{-1}}

\section*{Acknowledgements}

\addtocontents{toc}{\protect\setcounter{tocdepth}{1}}

The author is supported by
the FPU predoctoral fellowship of the Spanish Ministry of Universities (FPU21/04849).

\end{document}